\newtheorem{thm}{Theorem}[section]
\newtheorem{prop}[thm]{Proposition}
\newtheorem{lem}[thm]{Lemma}
\newtheorem{remark}{Remark}
\renewcommand{\O}{\Omega}
\newcommand\E{{E}}  
\newcommand\Th{{\mathcal T}_h}
\newcommand\Eh{{\mathcal E}_h}
\newcommand\R{\mathbb{R}}
\newcommand\C{\mathbb{C}}
\newcommand\D{\mathbb{D}}
\renewcommand{\P}{{\mathcal P}}  
\def\decapita#1{}
\def\grecomultibold#1#2{\grecobolddef#1\def\secondobold{#2}%
    \ifx#2\finemultibold\let\next\relax\let\secondobold\relax
    \else\let\next\grecomultibold
    \fi\expandafter\next\secondobold}
\def\grecobolddef#1{%
  \edef\dadef{bf\expandafter\decapita\string#1}%
  \expandafter\def\csname\dadef\endcsname{{\neretto #1}}}
\def\neretto#1{\setbox0=\hbox{\mathsurround=0pt$#1$}%
  \kern.02em\copy0 \kern-\wd0
  \kern-.02em\copy0 \kern-\wd0
  \raise.03em\box0 \kern.02em}
\def\bdiv{\mathop{\bf div}\nolimits}
\def\curl{\mathop{\rm curl}\nolimits}
\def\bcurl{\mathop{\bf curl}\nolimits}
\def\teps{{\bfvarepsilon}}
\def\A{{\mathcal A}}
\def\Ih{{\mathcal I}_h}
\def\IE{{\mathcal I}_E}
\def\wbox#1;#2;{\vbox{\hrule\hbox{\vrule height#1mm\kern#2mm\vrule
  height#1mm}\hrule}}
\let\phi\varphi
\let\b\b
\newcommand{\bba}   {\mathbf{a}}
\newcommand{\bbc}   {\mathbf{c}}
\newcommand{\bbf}   {\mathbf{f}}
\newcommand{\bbg}   {\mathbf{g}}
\newcommand{\bbh}   {\mathbf{h}}
\newcommand{\bbn}   {\mathbf{n}}
\newcommand{\bbp}   {\mathbf{p}}
\newcommand{\bbq}   {\mathbf{q}}
\newcommand{\bbr}   {\mathbf{r}}
\newcommand{\bbt}   {\mathbf{t}}
\newcommand{\bbu}   {\mathbf{u}}
\newcommand{\bbv}   {\mathbf{v}}
\newcommand{\bbw}   {\mathbf{w}}
\newcommand{\bbx}   {\mathbf{x}}
\def\C{\mathbb C}
\def\hpoint#1.#2.#3{{\underline{#1}}_{#2}\cdot
 {\underline{\mathop{\smash{#3}\vphantom{{#1}_{#2}}}}}}
\def\npointp#1.#2{{\underline{#1}}\cdot
 {\underline{\mathop{\smash{#2}\vphantom{{#1}}}}}}
\def\beq{\begin{equation}}
\def\enq{\end{equation}}
\def\bfzero{{\bf 0}}
\author{
E. Artioli\thanks{Department of Civil Engineering and Computer Science,
	University of Rome Tor Vergata,
	Via del Politecnico 1, 00133 Rome, Italy,
	{\tt artioli@ing.uniroma2.it}},
S. {d}e Miranda\thanks{DICAM, University of Bologna, Viale Risorgimento 2, 40136 Bologna, Italy,
	{\tt stefano.demiranda@unibo.it}},
C. Lovadina\thanks{
Dipartimento di Matematica, Universit\`a di Milano, Via Saldini 50, 20133 Milano,
and IMATI del CNR, Via Ferrata 1, 27100 Pavia, Italy,
 {\tt carlo.lovadina@unimi.it}},
L. Patruno\thanks{DICAM, University of Bologna, Viale Risorgimento 2, 40136 Bologna, Italy, {\tt luca.patruno@unibo.it}}
}
\date{}
\title{A Stress/Displacement Virtual Element Method for Plane Elasticity Problems}
\begin{document}

\maketitle

\begin{abstract}
The numerical approximation of 2D elasticity problems is considered, in the framework of the small strain theory and in connection with the mixed Hellinger-Reissner variational formulation. A low-order Virtual Element Method (VEM) with a-priori symmetric stresses is proposed. Several numerical tests are provided, along with a rigorous stability and convergence analysis.
\end{abstract}

{
\section{Introduction}

The Virtual Element Method (VEM) is a new technology for the approximation of partial differential equation problems. VEM was born in 2012, see \cite{volley}, as an evolution of modern mimetic schemes (see for instance \cite{Brezzi-Lipnikov-Shashkov-Simoncini:2007,MFD-book,Brezzi-Buffa-Lipnikov}), which shares the same variational background of the Finite Element Method (FEM). The initial motivation of VEM is the need to construct an accurate {\em conforming} Galerkin scheme with the capability to deal with highly general polygonal/polyhedral meshes, including ``hanging vertexes'' and non-convex shapes.
The virtual element method reaches this goal by {\em abandoning the local polynomial approximation concept}, and uses, instead, approximating functions which are solution to suitable local partial differential equations (of course, connected with the original problem to solve). Therefore, in general, the discrete functions are not known pointwise, but a limited information of them is at disposal. The key point is that the available information are indeed sufficient to implement the stiffness matrix and the right-hand side. We remark that VEM is not the only available technology for dealing with polytopal meshes: a brief representative sample of the increasing list of technologies that make use of polygonal/polyhedral meshes can be found in
\cite{%
	BLS05bis,BLM11,Brezzi-Buffa-Lipnikov,BLM11book,Bishop13,JA12,LMSXX,MS11,NBM09,RW12,POLY37,%
	ST04,TPPM10,VW13,Wachspress11,DiPietro-Ern-1,Gillette-1,Wang-1,PolyDG-1}. %
We here recall, in particular, the polygonal finite elements and the mimetic discretisation schemes.
However, VEM is experiencing a growing interest towards Structural Mechanics problems, also in the engineering community. We here cite the recent works \cite{GTP14,BeiraoLovaMora,ABLS_part_one,ABLS_part_two,wriggers,BCP,ANR} and \cite{BeiraodaVeiga-Brezzi-Marini:2013,Brezzi-Marini:2012}, for instance.

In the present paper we apply the VEM concept to two-dimensional elasticity problems in the framework of small displacements and small deformations. More precisely, we consider the (mixed) Hellinger-Reissner functional (see, for instance, \cite{BoffiBrezziFortin,Braess:book}) as the starting point of the discretization procedure. Thus, the numerical scheme approximates both the stress and the displacement fields. 

It is well-known that in the Finite Element practice, designing a stable and accurate element for the Hellinger-Reissner functional, is not at all a trivial task. Essentially, one is led either to consider quite cumbersome schemes, or to relax the symmetry of the Cauchy stress field, or to employ composite elements (a discussion about this issue can be found in \cite{BoffiBrezziFortin}, for instance).
We here exploit the flexibility of the VEM approach to propose and study {\em a low-order scheme, with a-priori symmetric Cauchy stresses, that can be used for general polygons, from triangular shapes on}. Furthermore, the method is robust with respect to the compressibility parameter, and therefore can be used for nearly incompressible situations. Our scheme approximates the stress field by using traction degrees of freedom (three per each edge), while the displacement field inside each polygon is essentially a rigid body motion. The VEM concept is then applied essentially for the stress field. We also remark that the construction of the discrete stress field is somehow similar to the construction of the discrete velocity field used for the Stokes problem in \cite{BLV}.
Instead, the displacement field is modelled with polynomial functions, in accordance with the classical Finite Element procedure.

An outline of the paper is as follows.
In Section \ref{sec:1} we briefly introduce the Hellinger-Reissner variational formulation of the elasticity problem. Section \ref{s:HR-VEM} concerns with the discrete problem: all the bilinear and linear forms are introduced and detailed.
Numerical experiments are reported in Section \ref{s:numer}, where suitable error measures are considered. These numerical tests are supported by the stability and convergence analysis developed in Section \ref{s:theoretical}. Finally, Section \ref{s:conclusions} draws some conclusions, including possible future extensions of the present study.

Throughout the paper, given two quantities $a$ and $b$, we use the notation $a\lesssim b$ to mean: there exists a constant $C$, independent of the mesh-size, such that $a\leq C\, b$. Moreover, we use standard notations for Sobolev spaces, norms and semi-norms (cf. \cite{Lions-Magenes}, for example).

\section{The elasticity problem in mixed form}\label{sec:1}

In this section we briefly present the elasticity problem as it stems from the Hellinger-Reissner principle. More details can be found in \cite{BoffiBrezziFortin, Braess:book}.


\begin{equation}\label{strong}
\left\lbrace{
	\begin{aligned}
	&\mbox{Find } (\bfsigma,\bbu)~\mbox{such that}\\
	&-\bdiv \bfsigma= \bbf\qquad \mbox{in $\Omega$}\\
	& \bfsigma = \C \teps(\bbu)\qquad \mbox{in $\Omega$}\\
	&\bbu_{|\partial\Omega}=\bfzero
	\end{aligned}
} \right.
\end{equation}

Defining $(\cdot,\cdot)$ as the scalar product in $L^2$,
and $a(\bfsigma,\bftau):=(\D \bfsigma, \bftau)$, a mixed variational formulation of the problem reads:
\begin{equation}\label{cont-pbl}
\left\lbrace{
\begin{aligned}
&\mbox{Find } (\bfsigma,\bbu)\in \Sigma\times U~\mbox{such that}\\
&a(\bfsigma,\bftau) + (\bdiv \bftau, \bbu)=0 \quad \forall \bftau\in \Sigma\\
& (\bdiv \bfsigma, \bbv) = -(\bbf,\bbv)\quad \forall \bbv\in U
\end{aligned}
} \right.
\end{equation}
where $\O\subset \R^2$ is a polygonal domain, $\Sigma=H(\bdiv;\Omega)$, $U=\times L^2(\Omega)^2$, and the loading $\bbf \in L^2(\O)^2$. We recall that $\bdiv$ is the vector-valued divergence operator, acting on a second order tensor field. Thus, $\bdiv \bftau$ is, in Cartesian components: $\frac{\partial\tau_{ij}}{\partial x_j}$ (Einstein's  summation convention is here adopted).
The elasticity fourth-order symmetric tensor $\D:=\C^{-1}$ is assumed to be uniformly bounded and positive-definite.
It is well known that problem \eqref{cont-pbl} is well-posed (see \cite{BoffiBrezziFortin}, for instance). in particular, it holds:

\begin{equation}\label{eq:aprioriest}
||\bfsigma||_\Sigma + || \bbu ||_U \leq C || \bbf||_0 ,
\end{equation}
where $C$ is a constant depending on $\Omega$ and on the material tensor $\C$.

Note also that the bilinear form $a(\cdot,\cdot)$ in~\eqref{cont-pbl} can obviously be split as
\begin{equation}\label{dec_a} a(\bfsigma,\bftau)=\sum_{{\E\in \Th}}a_E(\bfsigma,\bftau) \quad \textrm{ with } \quad
a_E(\bfsigma,\bftau) : = \int_E \D \bfsigma : \bftau
\end{equation}
for all $\bfsigma,\bftau \in \Sigma$. Above, $\Th$ is a polygonal mesh of meshsize $h$.

Similarly, it holds
\begin{equation}\label{dec_div}
(\bdiv \bftau, \bbv)=\sum_{{\E\in \Th}}(\bdiv \bftau, \bbv)_E \quad \textrm{ with } \quad
(\bdiv \bftau,\bbv)_E : = \int_E \ \bdiv \bftau \cdot \bbv ,
\end{equation}
for all $(\bftau,\bbv) \in \Sigma\times U$.

\begin{remark}\label{rm:incompress}
As discussed in \cite{BoffiBrezziFortin}, estimate \eqref{eq:aprioriest} does not break down for nearly incompressible materials. More precisely, considering the constitutive law:

\begin{equation}\label{eq:hom-iso}
\C \bfvarepsilon = 2\mu \bfvarepsilon + \lambda {\rm tr}(\bfvarepsilon) Id \qquad \forall \mbox{ {\rm symmetric tensor} \bfvarepsilon},
\end{equation}
with $\lambda, \mu > 0$ the Lame's parameters and ${\rm tr}(\cdot)$ the trace operator, the constant $C$ in \eqref{eq:aprioriest} can be chosen independent of $\lambda$. The key point is that it is sufficient to check the $\Sigma$-coercivity of the bilinear form $a(\cdot,\cdot)$ in \eqref{cont-pbl} for the subspace:

\begin{equation}\label{eq:kernel}
K = \left\{ \bftau\in \Sigma \, :\, (\bdiv \bftau ,\bbv)=0 \quad \forall \bbv\in U \right\}.
\end{equation}

In fact, there exists a positive constant $\alpha$ such that (see \cite{BoffiBrezziFortin}):

\begin{equation}
a(\bftau,\bftau)\ge \alpha ||\bftau||^2_\Sigma \qquad \forall \bftau\in K ,
\end{equation}
with $\alpha$ independent of $\lambda$.
	
\end{remark}
%

\section{The Virtual Element Method}
\label{s:HR-VEM}

We outline the Virtual Element discretization of problem \eqref{cont-pbl}.
Let $\{\mathcal{T}_h\}_h$ be a sequence of decompositions of $\Omega$ into general polygonal elements $E$ with
\[
h_E := {\rm diameter}(E) , \quad
h := \sup_{E \in \mathcal{T}_h} h_E .
\]
In what follows, $|E|$ and $|e|=h_e$ will denote the area of $E$ and the length of the side $e\in\partial E$, respectively.

We suppose that for all $h$, each element $E$ in $\mathcal{T}_h$ fulfils the following assumptions:
\begin{itemize}
	\item $\mathbf{(A1)}$ $E$ is star-shaped with respect to a ball of radius $ \ge\, \gamma \, h_E$,
	\item $\mathbf{(A2)}$ the distance between any two vertexes of $E$ is $\ge c \, h_E$,
\end{itemize}
where $\gamma$ and $c$ are positive constants. We remark that the hypotheses above, though not too restrictive in many practical cases,
can be further relaxed, as noted in ~\cite{volley}.

In addition, we suppose that the tensor $\D$ is piecewise constant with respect to the underlying mesh $\Th$.

\subsection{The local spaces}\label{ss:E-spaces}

Given a polygon $E\in\Th$ with $n_E$ edges, we first introduce the space of local infinitesimal rigid body motions:

\begin{equation}\label{eq:rigid}
RM(E)=\left\{ \bbr(\bbx) = \bba + b(\bbx -\bbx_C)^\perp \quad \bba\in\R^2, \ b\in \R  \right\}.
\end{equation}
Here above, given $\bbc=(c_1,c_2)^T\in\R^2$, $\bbc^\perp$ is the counterclock-wise rotated vector  $\bbc^\perp=(c_2,-c_1)^T$, and $\bbx_C$ is the baricenter of $E$. For each edge $e$ of $\partial E$, we introduce the space

\begin{equation}\label{eq:edge_approx}
R(e)=\left\{ \bbt(s) = \bbc + d\,s\,\bbn \quad \bbc\in\R^2, \ d\in \R, \ s\in [-1/2,1/2]  \right\}.
\end{equation}
Here above, $s$ is a local linear coordinate on $e$, such that $s=0$ corresponds to the edge midpoint. Furthermore, $\bbn$ is the outward normal to the edge $e$. Hence, $R(e)$ consists of vectorial functions which have the edge tangential component constant, and the edge normal component linear along the edge.
Our local approximation space for the  stress field is then defined by

\begin{equation}\label{eq:local_stress}
\begin{aligned}
 \Sigma_h(E)=\Big\{ \bftau_h\in & H(\bdiv;E)\ :\ \exists \bbw^\ast\in H^1(E)^2 \mbox{ such that } \bftau_h=\C\teps(\bbw^\ast);\\  &(\bftau_h\,\bbn)_{|e}\in R(e) \quad \forall e\in \partial E;\quad
\bdiv\bftau_h\in RM(E) \Big\}.
\end{aligned}
\end{equation}

\begin{remark} Alternatively, the space \eqref{eq:local_stress} can be defined as follows.
	\begin{equation}\label{eq:local_stress-alt}
	\begin{aligned}
	 \Sigma_h(E)=\Big\{ \bftau_h\in & H(\bdiv;E)\ :\ \bftau_h=\bftau_h^T;\quad \curl \bcurl(\D \bftau_h) = 0 ; \\  &(\bftau_h\,\bbn)_{|e}\in R(e) \quad \forall e\in \partial E;\quad
	 \bdiv\bftau_h\in RM(E) \Big\}.
	\end{aligned}
	\end{equation}
Here above, the equation $\curl \bcurl (\D \bftau_h) = 0$ is to be intended in the distribution sense.	
\end{remark}

We remark that, once $(\bftau_h\,\bbn)_{|e}=\bbc_e +d_es\,\bbn$ is given for all $e\in\partial E$, cf. \eqref{eq:edge_approx}, the quantity $\bdiv\bftau_h\in RM(E)$ is determined. Indeed, denoting with $\bfvarphi:\partial E\to \R^2$ the function such that $\bfvarphi_{|e}:=\bbc_e+d_es\,\bbn$, the obvious compatibility condition

\begin{equation}\label{eq:compat}
\int_E \bdiv\bftau_h\cdot \bbr =
\int_{\partial E}\bfvarphi\cdot \bbr \qquad \forall \bbr\in RM(E) ,
\end{equation}
allows to compute $\bdiv\bftau_h$ using the $\bbc_e$'s and the $d_e$'s. More precisely, setting (cf \eqref{eq:rigid})

\begin{equation}\label{eq:div1}
\bdiv\bftau_h = \bfalpha_E + \beta_E (\bbx -\bbx_C)^\perp ,
\end{equation}
from \eqref{eq:compat} we infer

\begin{equation}\label{eq:div2}
\left\lbrace{
	\begin{aligned}
&\bfalpha_E =\frac{1}{|E|}\int_{\partial E}\bfvarphi = \frac{1}{|E|}\sum_{e\in\partial E}\int_e \bbc_e \\
&\beta_E = \frac{1}{ \int_E | \bbx -\bbx_C |^2 }\int_{\partial E} \bfvarphi\cdot (\bbx -\bbx_C)^\perp  = \frac{1}{ \int_E | \bbx -\bbx_C |^2 }\sum_{e\in\partial E}\int_e (\bbc_e+d_e s\,\bbn)\cdot (\bbx -\bbx_C)^\perp.
\end{aligned}
} \right.
\end{equation}

The local approximation space for the displacement field is simply defined by, see \eqref{eq:rigid}:

\begin{equation}\label{eq:local_displ}
U_h(E)=\Big\{ \bbv_h\in  L^2(E)^2\ :\ \bbv_h\in RM(E) \Big\}.
\end{equation}

We notice that $\dim( \Sigma_h(E))=3\,n_E$, while $\dim(U_h(E))=3$.

\subsection{The local bilinear forms}\label{ss:E-bforms}

Given $E\in \Th$, we first notice that, for every $\bftau_h\in  \Sigma_h(E)$ and $\bbv_h\in U_h(E)$, the term

\begin{equation}\label{eq:div-ex}
\int_E \bdiv \bftau_h\cdot \bbv_h
\end{equation}
is computable from the knowledge of the degrees of freedom. Therefore, there is no need to introduce any approximation in the structure of the terms $(\bdiv \bftau, \bbu)$ and $(\bdiv \bfsigma, \bbv)$ in problem \eqref{cont-pbl}. Instead, the term

\begin{equation}
a_E(\bfsigma_h,\bftau_h)  = \int_E \D \bfsigma_h : \bftau_h
\end{equation}
is not computable for a general couple $(\bfsigma_h,\bftau_h)\in \Sigma_h(E)\times \Sigma_h(E)$. As usual in the VEM approach (see \cite{volley}, for instance), we then need to introduce a suitable approximation $a_E^h(\cdot,\cdot)$ of $a_E(\cdot,\cdot)$. To this end, we first define the projection operator

\begin{equation}\label{eq:proj}
\left\{
\begin{aligned}
& \Pi_E \, :\,  \Sigma_h(E)\to \P_0(E)^{2\times 2}_s\\
& \bftau_h \mapsto \Pi_E\bftau_h\\
&a_E(\Pi_E\bftau_h,\bfpi_0)  = a_E(\bftau_h,\bfpi_0)\qquad \forall\bfpi_0 \in \P_0(E)^{2\times 2}_s
\end{aligned}
\right.
\end{equation}

Above and in the sequel, given a domain $\omega$ and an integer $k\ge 0$, the space $\P_k(\omega)$ denotes the polynomials up to degree $k$, defined on $\omega$. Furthermore, given a functional space $X$, $X^{2\times 2}_s$ denotes the $2\times 2$ symmetric tensors whose components belong to $X$. Therefore, the operator in \eqref{eq:proj} is a projection onto the piecewise constant symmetric tensors.

We then set

\begin{equation}\label{eq:ah1}
\begin{aligned}
a_E^h(\bfsigma_h,\bftau_h)  &=
a_E(\Pi_E\bfsigma_h,\Pi_E\bftau_h) + s_E\left( (Id-\Pi_E)\bfsigma_h, (Id-\Pi_E)\bftau_h \right)\\
&=\int_E \D (\Pi_E\bfsigma_h) : (\Pi_E\bftau_h)  + s_E\left( (Id-\Pi_E)\bfsigma_h, (Id-\Pi_E)\bftau_h \right) ,
\end{aligned}
\end{equation}
where $s_E(\cdot,\cdot)$ is a suitable stabilization term. We propose the following choice:
	
	\begin{equation}\label{eq:stab1}
	s_E(\bfsigma_h,\bftau_h) : = \kappa_E\, h_E\int_{\partial E} \bfsigma_h\bbn\cdot \bftau_h\bbn  ,
	\end{equation}
	where $\kappa_E$ is a positive constant to be chosen. For instance, in the numerical examples of Section \ref{s:numer}, $\kappa_E$ is set equal to $\frac{1}{2} {\rm tr}(\D_{|E})$; however, any norm of $\D_{|E}$ can be used. A possible variant of \eqref{eq:stab1} is provided by
	
	\begin{equation}\label{eq:stab1bis}
	s_E(\bfsigma_h,\bftau_h) : = \kappa_E\, \sum_{e\in\partial E}h_e\int_{e} \bfsigma_h\bbn\cdot \bftau_h\bbn.
	\end{equation}


\subsection{The local loading terms}\label{ss:E-load}

We need to consider the term, see \eqref{cont-pbl}:

\begin{equation}\label{eq:fh}
(\bbf,\bbv_h)=\int_{\Omega}\bbf\cdot\bbv_h =\sum_{E\in\Th}\int_{E}\bbf\cdot\bbv_h .
\end{equation}
We remark that, since $\bbv_h\in RM(E)$, computing \eqref{eq:fh} is possible once a suitable quadrature rule is available for polygonal domains. For such an issue, see for instance \cite{NME2759,Sommariva2007,MS11}.


\subsection{The discrete scheme}\label{ss:discrete}

We are now ready to introduce the discrete scheme. We introduce a global approximation space for the stress field, by glueing the local approximation spaces, see \eqref{eq:local_stress}:

\begin{equation}\label{eq:global-stress}
\Sigma_h=\Big\{ \bftau_h\in  H(\bdiv;\Omega)\ :\ \bftau_{h|E}\in  \Sigma_h(E)\quad \forall E\in\Th \Big\}.
\end{equation}

For the global approximation of the displacement field, we take, see \eqref{eq:local_displ}:

\begin{equation}\label{eq:global-displ}
U_h=\Big\{ \bbv_h\in  L^2(\Omega)^2\ :\ \bbv_{h|E}\in U_h(E)\quad \forall E\in\Th \Big\}.
\end{equation}

Furthermore, given a local approximation of $a_E(\cdot,\cdot)$, see \eqref{eq:ah1}, we set

\begin{equation}\label{eq:global-ah}
a_h(\bfsigma_h,\bftau_h):= \sum_{E\in\Th}a_E^h(\bfsigma_h,\bftau_h) .
\end{equation}

The method we consider is then defined by

\begin{equation}\label{eq:discr-pbl-ls}
\left\lbrace{
	\begin{aligned}
	&\mbox{Find } (\bfsigma_h,\bbu_h)\in \Sigma_h\times U_h~\mbox{such that}\\
	&a_h(\bfsigma_h,\bftau_h)    + (\bdiv \bftau_h, \bbu_h)= 0 \quad \forall \bftau_h\in \Sigma_h\\
	& (\bdiv \bfsigma_h, \bbv_h) = -(\bbf,\bbv_h)\quad \forall \bbv_h\in U_h .
	\end{aligned}
} \right.
\end{equation}

Introducing the bilinear form $\A_h:(\Sigma_h\times U_H)\times (\Sigma_h\times U_h)\to \R$ defined by

\begin{equation}\label{eq:totbilinh}
\begin{aligned}
\A_h(\bfsigma_h,\bbu_h ;\bftau_h,\bbv_h):=
a_h(\bfsigma_h,\bftau_h)  + (\bdiv \bftau_h, \bbu_h)+(\bdiv \bfsigma_h, \bbv_h) ,
\end{aligned}
\end{equation}
problem \eqref{eq:discr-pbl-ls} can be written as

\begin{equation}\label{discr-pbl-cpt}
\left\lbrace{
	\begin{aligned}
	&\mbox{Find } (\bfsigma_h,\bbu_h)\in \Sigma_h\times U_h~\mbox{such that}\\
	&\A_h(\bfsigma_h,\bbu_h;\bftau_h,\bbv_h)= - (\bbf,\bbv_h)\quad \forall (\bftau_h,\bbv_h)\in \Sigma_h\times U_h .
	\end{aligned}
} \right.
\end{equation}

We will prove in Section \ref{s:theoretical} that our method is first order convergent with respect to the natural norms, see in particular Theorem \ref{th:main_convergence}. More precisely, the following error estimate holds true.

\begin{equation}\label{eq:main_conv-est-preview}
|| \bfsigma - \bfsigma_h||_{\Sigma} + || \bbu - \bbu_h||_U \lesssim C\, h ,
\end{equation}
where $C=C(\Omega,\bfsigma,\bbu)$ is independent of $h$ but depends on the domain $\Omega$ and on the Sobolev regularity of $\bfsigma$ and $\bbu$.


\section{Numerical results}\label{s:numer}
The present section is devoted to the validation of the proposed methodology through the assessment of accuracy on a selected number of test problems. Applicability to structural analysis is then demonstrated through a classical benchmark.

\subsection{Accuracy assessment}
\label{ss:analytical}
We consider two boundary value problems on the unit square domain $\Omega = [0, 1]^2$, with known analytical solution, discussed in \cite{BeiraoLovaMora,ABLS_part_one}. The material obeys to a homogeneous isotropic constitutive law, see \eqref{eq:hom-iso}, with material parameters assigned in terms of the Lam\'e constants, here set as $\lambda = 1$ and $\mu = 1$. Plane strain regime is invoked throughout. The tests are defined by choosing a required solution and deriving the corresponding body load $\bbf$, as synthetically indicated in the following:
\begin{itemize}
	\item[$\bullet$]
	Test $a$
	\begin{eqnarray}
		\label{eq:test_2a_sol}
		\left\{
		\begin{array}{l}
			u_1 = x^3 - 3 x y^2 \\
			u_2 = y^3 - 3 x^2 y \\
			\bbf = \bfzero
		\end{array}
		\right .
	\end{eqnarray}
	\item[$\bullet$]
	Test $b$
	\begin{eqnarray}
		\label{eq:test_2b_sol}
		\left\{
		\begin{array}{l}
			u_1 = u_2 = \sin(\pi x)  \sin(\pi y) \\
			f_1 = f_2 = -\pi^2 \left[ -(3 \mu + \lambda ) \sin(\pi x) \sin ( \pi y) + ( \mu + \lambda ) \cos ( \pi x) \cos ( \pi y ) \right]
		\end{array}
		\right .
	\end{eqnarray}
\end{itemize}
As it can be observed, Test $a$ is a problem with Dirichlet non-homogeneous boundary conditions, zero loading and a polynomial solution; whereas Test $b$ has homogeneous Dirichlet boundary conditions, trigonometric distributed loads with a trigonometric solution.

In order to test the robustness of the proposed procedure with respect to element topology and mesh distortion, six different meshes are considered, as can be inspected in Fig. \ref{fig:meshes}. Three are {\em structured} meshes composed of triangles, quadrilaterals and a set of quads, pentagons and hexagons. In the following, such meshes are denoted by the letter "S". Three {\em unstructured} meshes are considered as well, comprising triangles, quadrilaterals and random polygons; these are denoted by the letter "U". In the numerical campaign the mesh size parameter is chosen to be the average edge length, denoted with $\bar{h}_e$. We remark that, under mesh assumptions $\mathbf{(A1)}$ and $\mathbf{(A2)}$ and for a quasi-uniform family of mesh, $\bar{h}_e$ is indeed equivalent to both $h_E$ and $h$.
The accuracy and the convergence rate assessment is carried out using the following error norms:

\begin{itemize}
	\item[$\bullet$] Discrete error norms for the stress field:
	\begin{equation}
		\label{eq:stress_err_norm}
		E_{\bfsigma}   :=\left( \sum_{e \in \Eh} \kappa_e\, |e|\int_{e} | (\bfsigma - \bfsigma_h)\bbn  |^2\right)^{1/2} ,
	\end{equation}
	where $\kappa_e= \kappa=\frac{1}{2} {\rm tr}(\D)$ (the material is here homogeneous).
	We remark that the quantity above scales like the internal elastic energy, with respect to the size of the domain and of the elastic coefficients.  	
	
	We make also use of the $L^2$ error on the divergence:
	
	\begin{equation}
		\label{eq:stress_div_err_norm}
		E_{\bfsigma, \bdiv}   :=\left( \sum_{E \in \Th} \int_E | \bdiv(\bfsigma - \bfsigma_h)  |^2\right)^{1/2} .
	\end{equation}

	\item[$\bullet$] $L^2$ error norm for the displacement field:
	\begin{equation}
		\label{eq:displ_err_norm}
		E_{\bbu} :=\left( \sum_{E \in \Th} \int_{E}  | \bbu - \bbu_h |^2 \right)^{1/2} = ||\bbu - \bbu_h ||_0.
	\end{equation}
	
\end{itemize}

\begin{figure}[h!]
	\centering
	\renewcommand{\thesubfigure}{}
	\subfigure[Tri (S)]{\includegraphics[width=0.3\textwidth,trim = 20mm 3mm 20mm 3mm, clip]{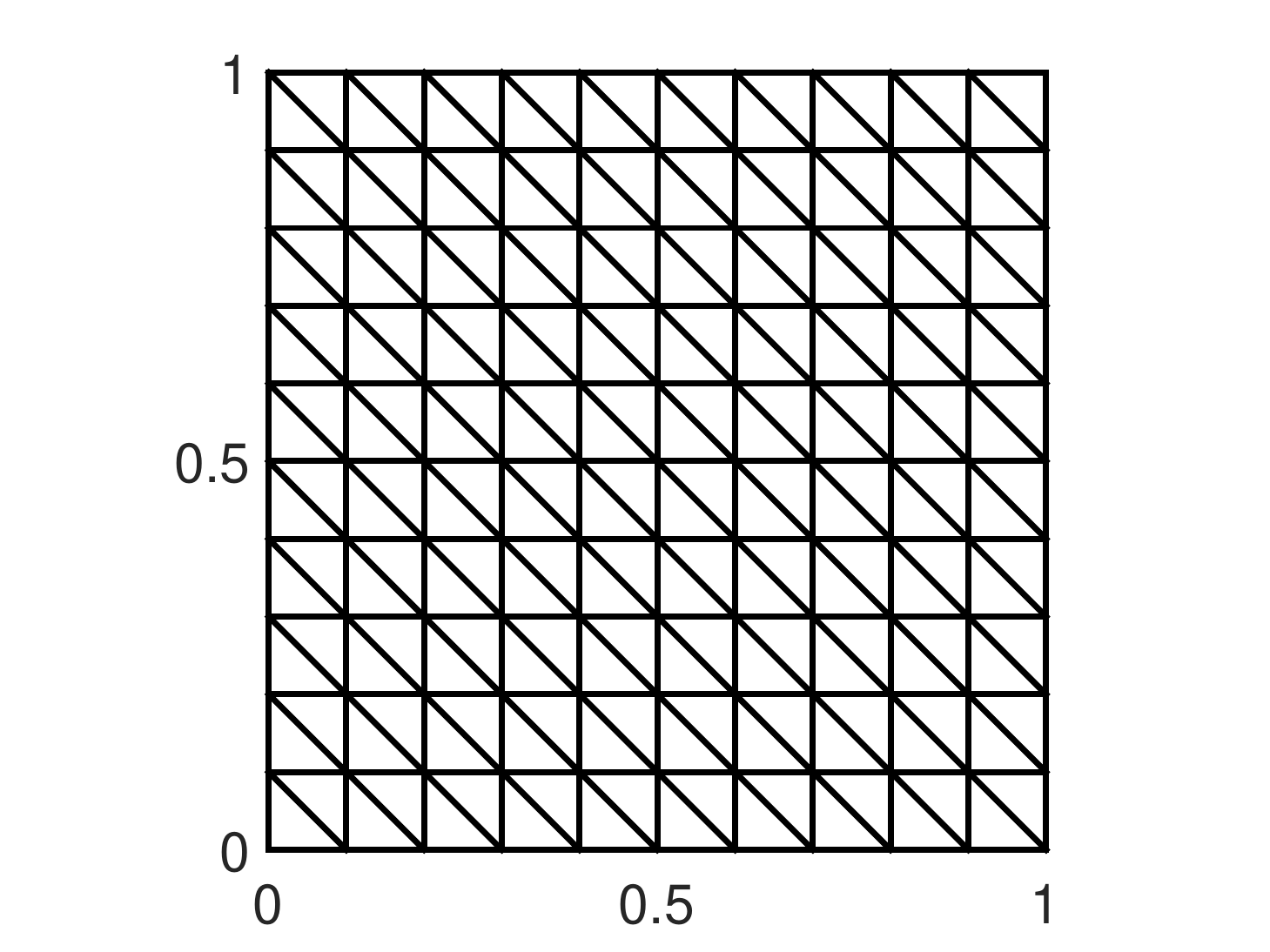}}
	\subfigure[Quad (S)]{\includegraphics[width=0.3\textwidth,trim = 20mm 3mm 20mm 3mm, clip]{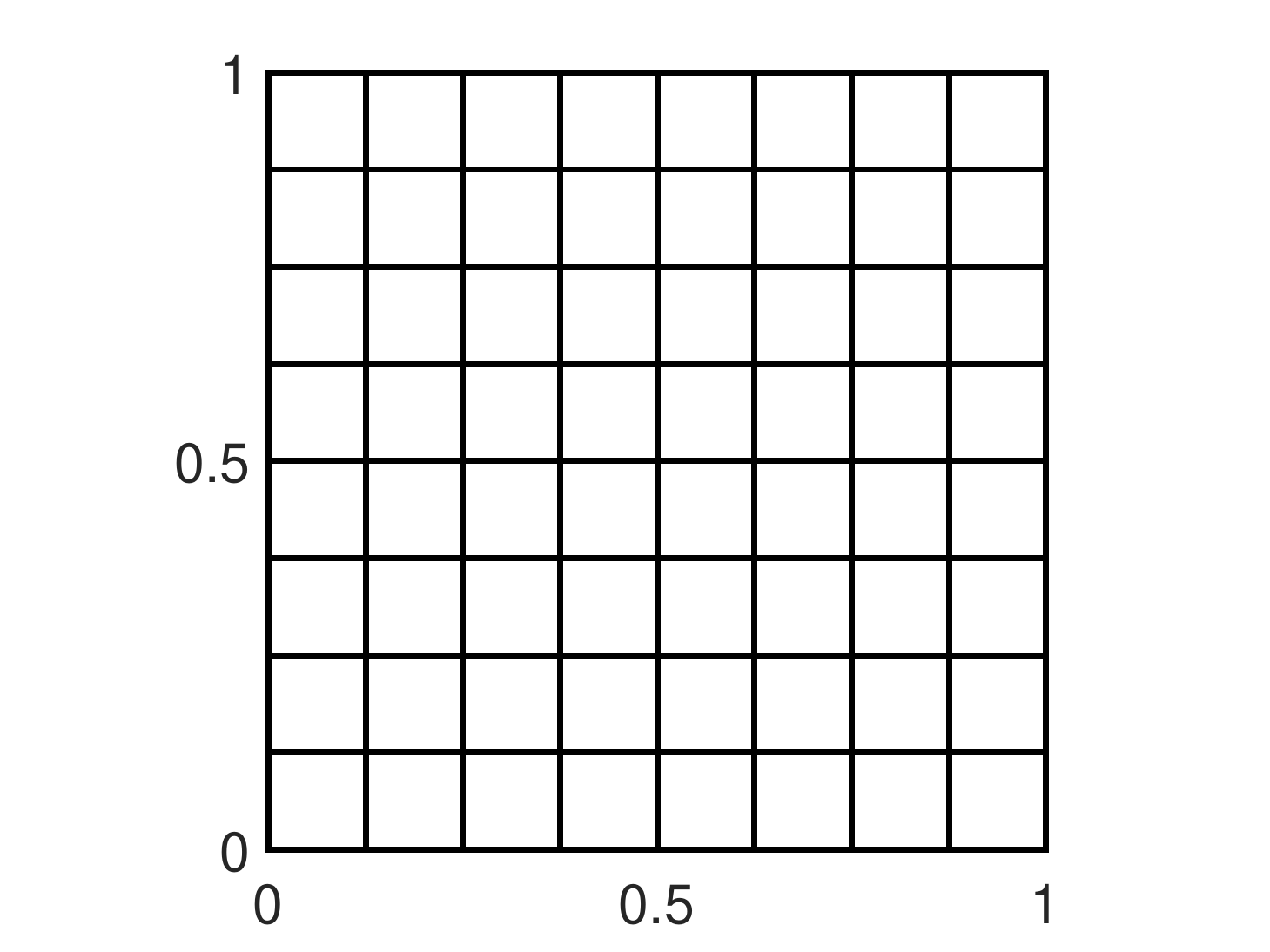}}
	\subfigure[Hex (S)]{\includegraphics[width=0.3\textwidth,trim = 20mm 3mm 20mm 3mm, clip]{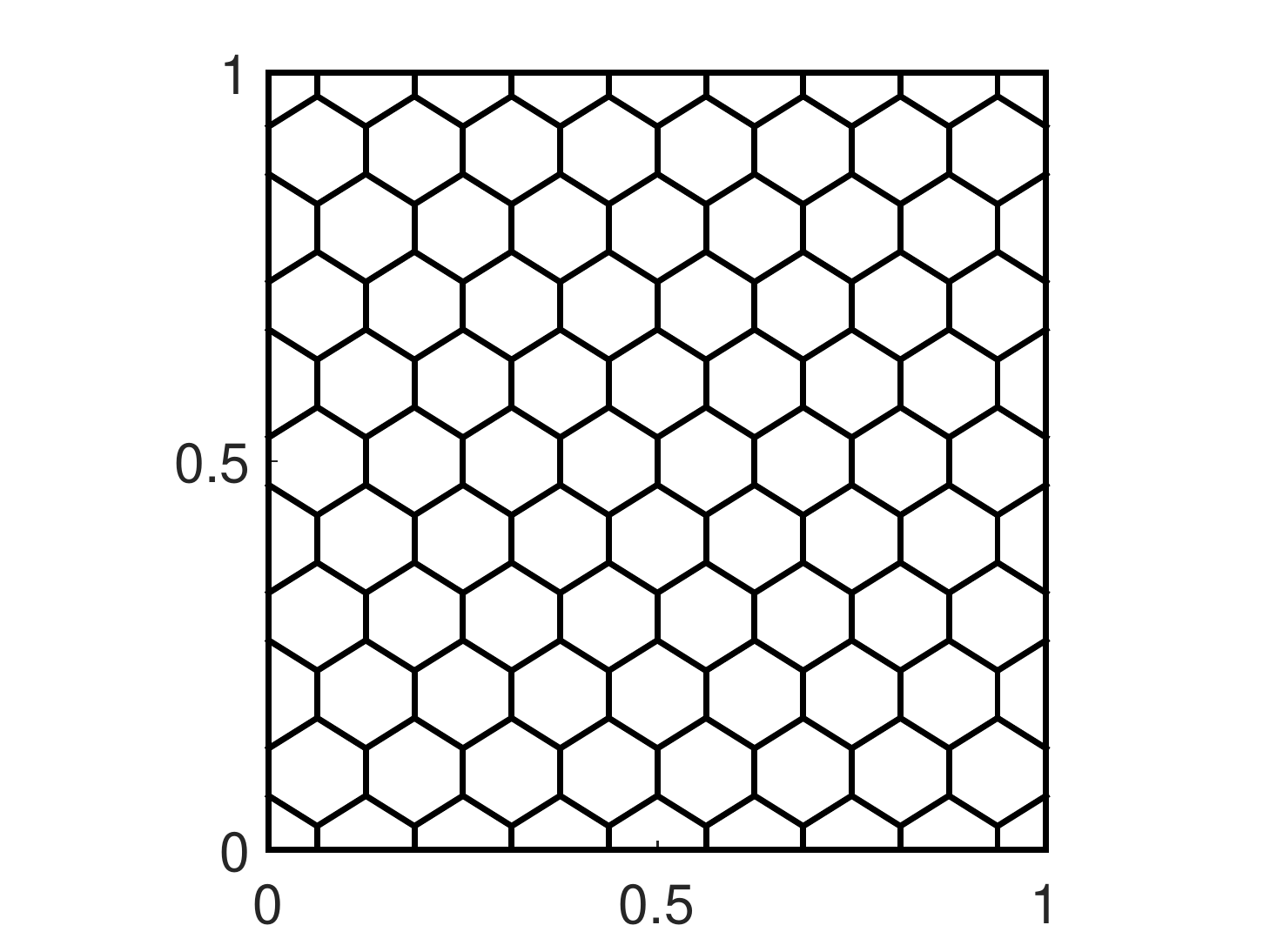}}
	\subfigure[Tri (U)]{\includegraphics[width=0.3\textwidth,trim = 20mm 3mm 20mm 3mm, clip]{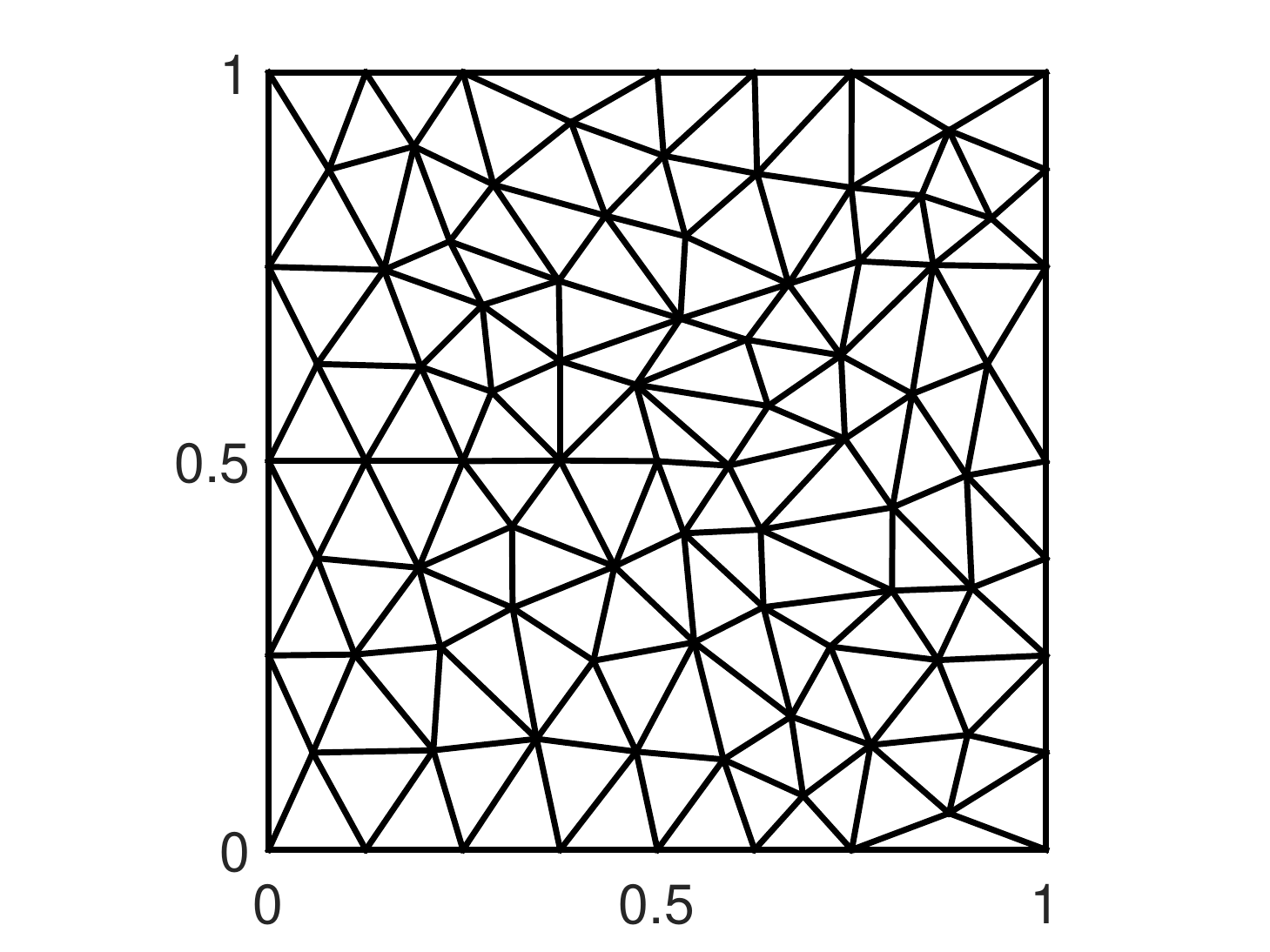}}
	\subfigure[Quad (U)]{\includegraphics[width=0.3\textwidth,trim = 20mm 3mm 20mm 3mm, clip]{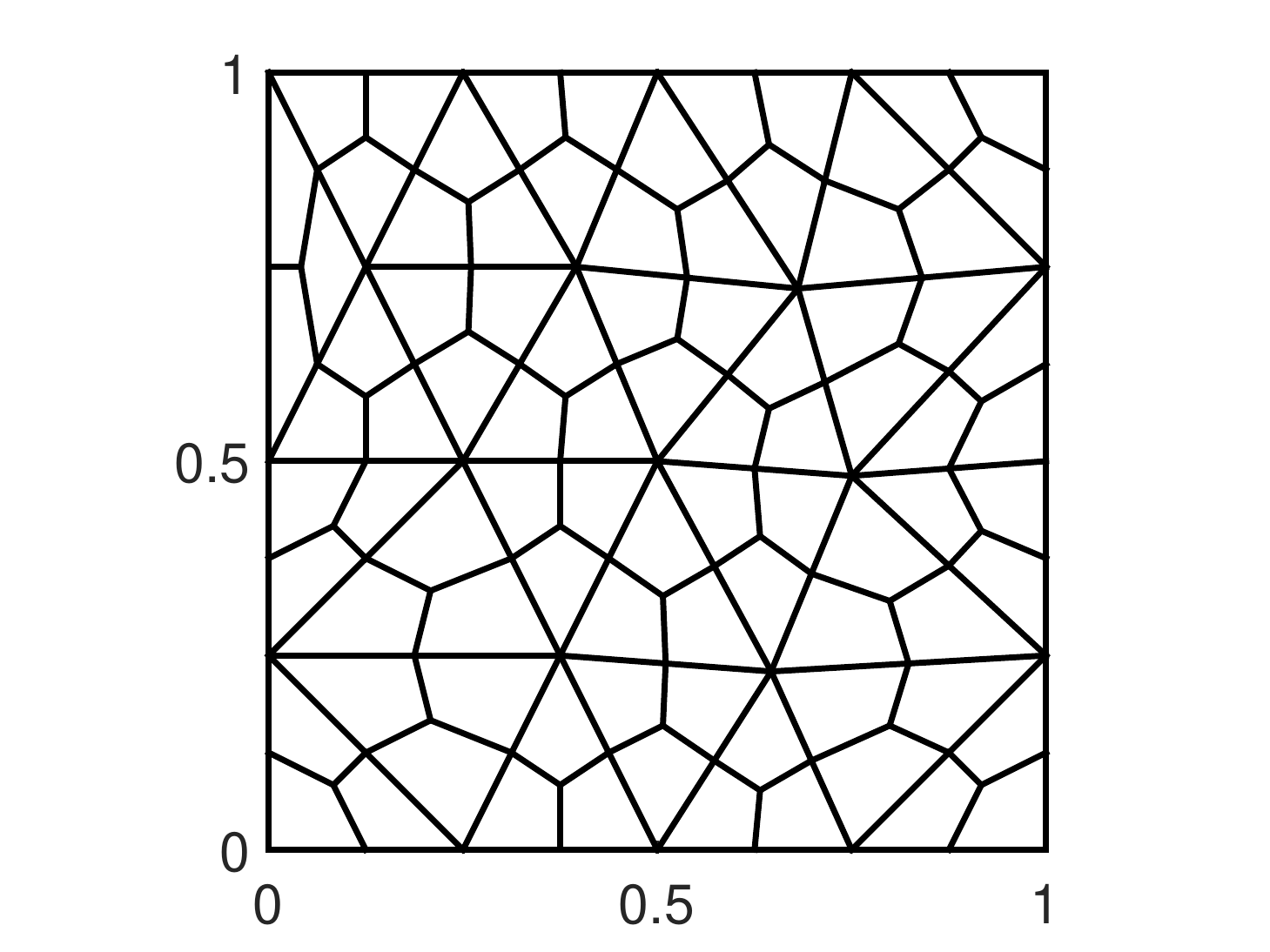}}
	\subfigure[Poly (U)]{\includegraphics[width=0.3\textwidth,trim = 20mm 3mm 20mm 3mm, clip]{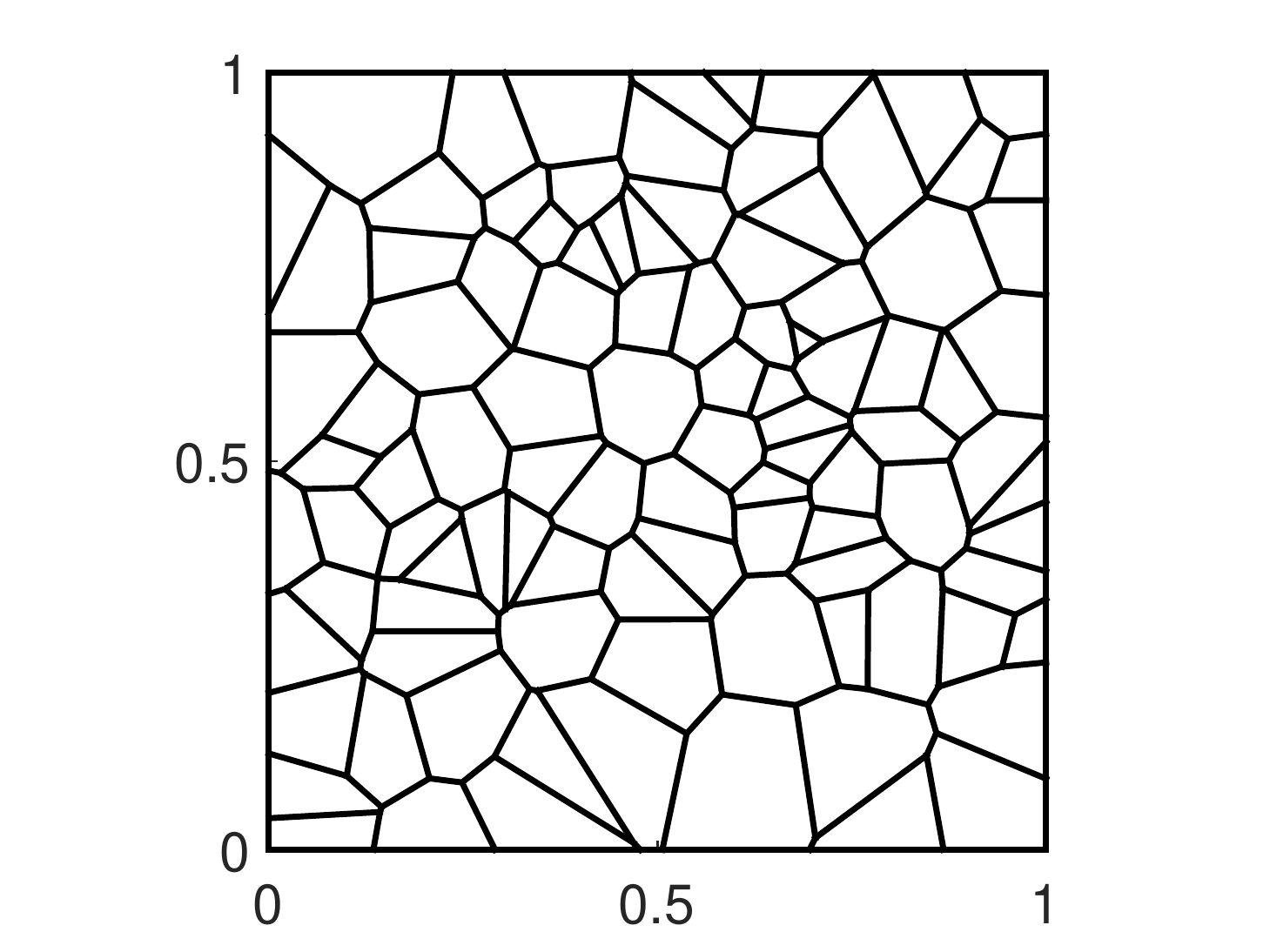}}
	\caption{Overview of adopted meshes for convergence assessment numerical tests.}
	\label{fig:meshes}
\end{figure}

Figure \ref{fig:resuTestA} reports the $\bar{h}_e-$convergence of the proposed method for Test $a$. As expected, the asymptotic convergence rate is approximately equal to $1$ for all the considered error norms and meshes. It is noted that, in this case, the $E_{\bfsigma, \bdiv}$ plots are not reported because such a quantity is captured up to machine precision for all the considered computational grids.

\begin{figure}[h!]
	\centering
	\subfigure[]{\includegraphics[width=0.45\textwidth,trim = 0mm 0mm 0mm 0mm, clip]{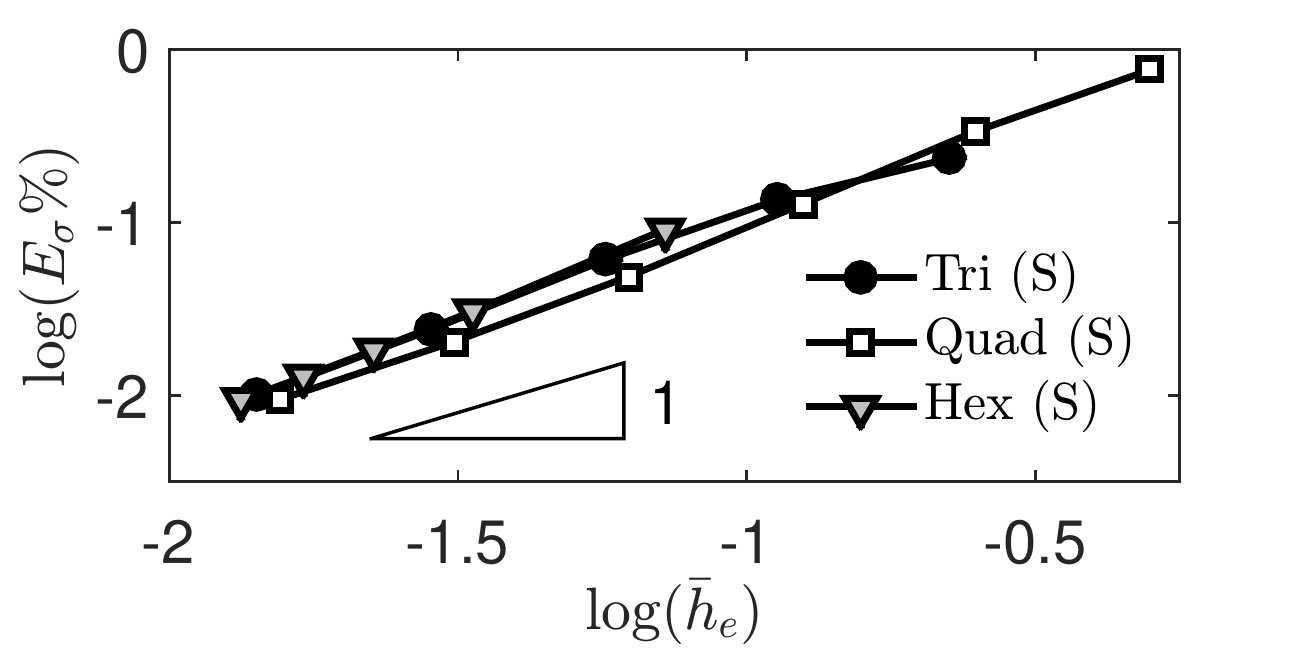}}
	\subfigure[]{\includegraphics[width=0.45\textwidth,trim = 0mm 0mm 0mm 0mm, clip]{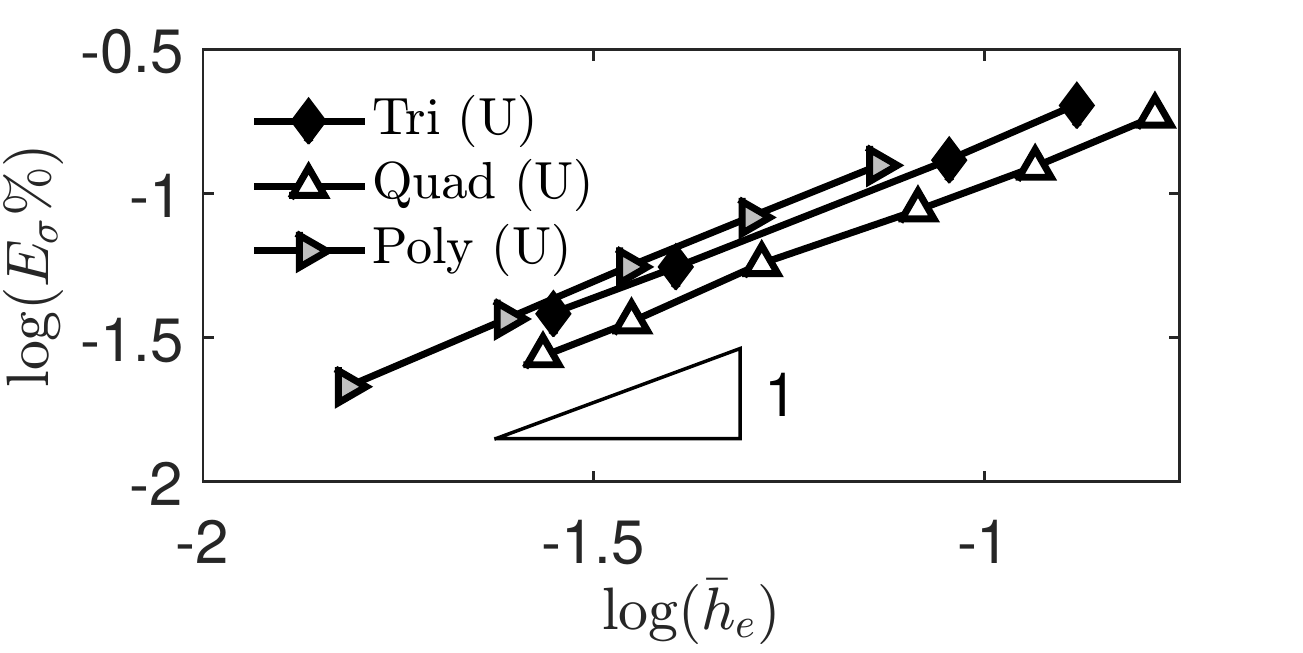}}\\
	\subfigure[]{\includegraphics[width=0.45\textwidth,trim = 0mm 0mm 0mm 0mm, clip]{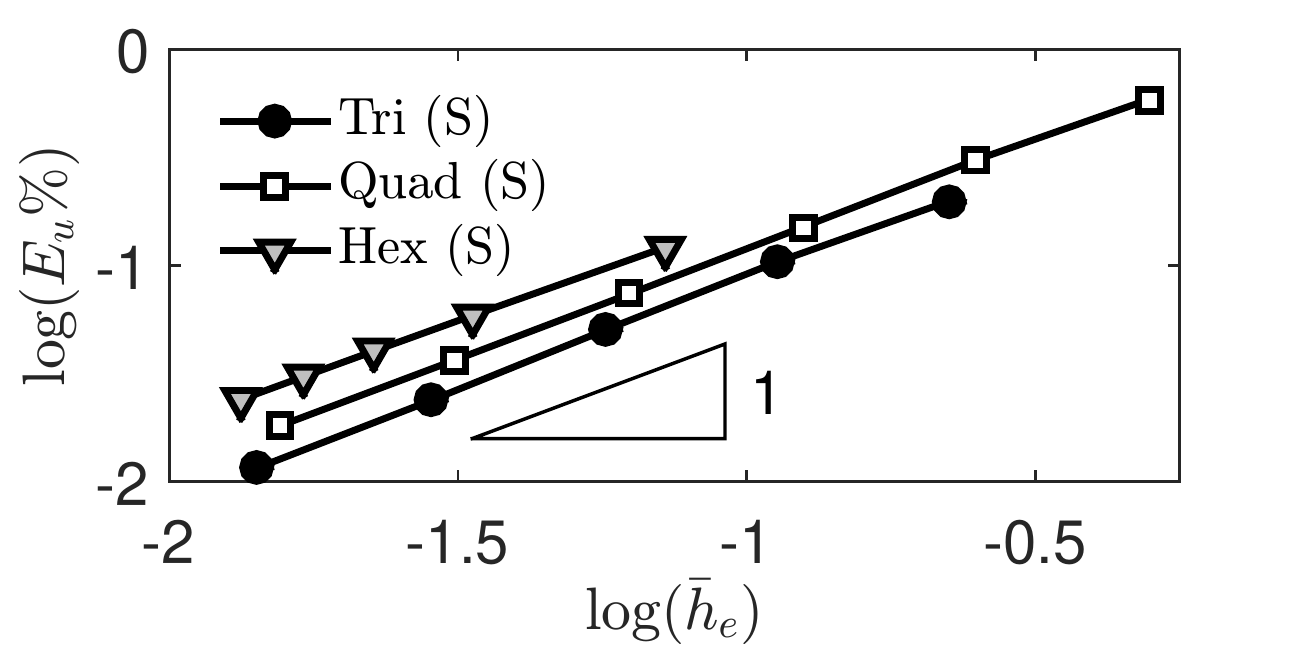}}
	\subfigure[]{\includegraphics[width=0.45\textwidth,trim = 0mm 0mm 0mm 0mm, clip]{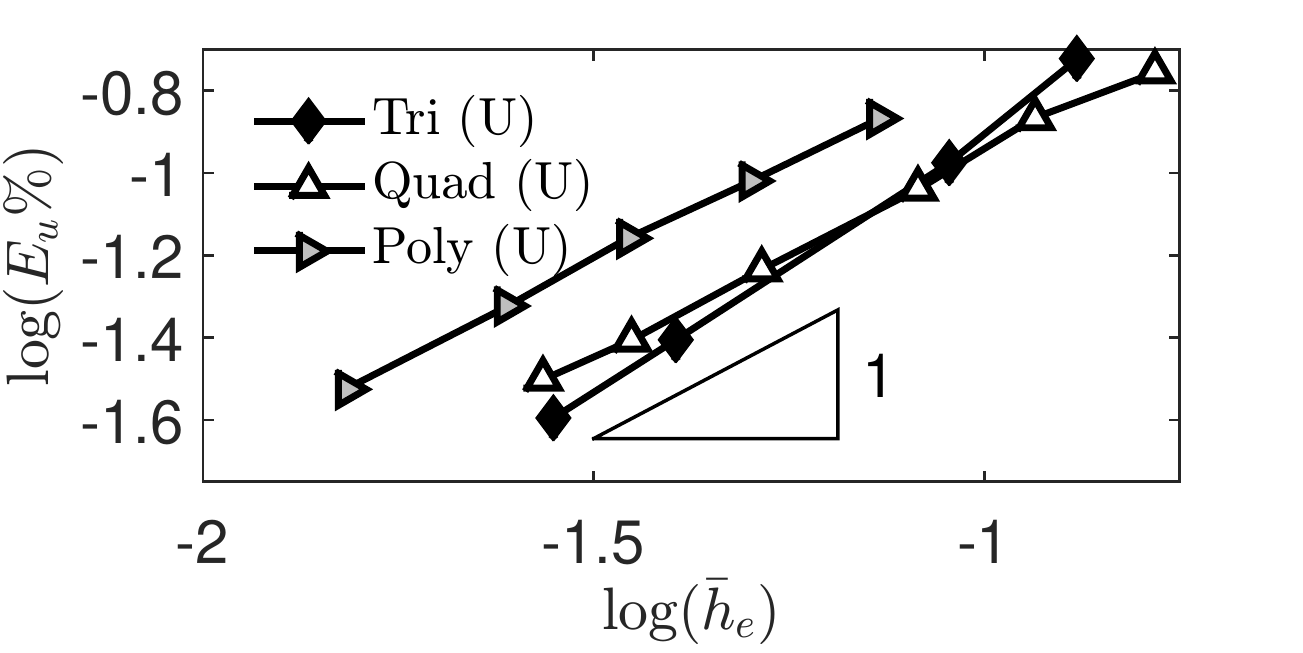}}
	\caption{$\bar{h}_e-$convergence results for Test $a$ on structured and unstructured meshes: (a) and (b) $E_{\bfsigma}$ error norm plots, (c) and (d) $E_{\bbu}$ error norm plots.}
	\label{fig:resuTestA}
\end{figure}

Figure \ref{fig:resuTestB} reports $\bar{h}_e-$convergence for Test $b$. Asymptotic converge rate is approximately equal to $1$ for all investigated mesh types and error measures, including $E_{\bfsigma, \bdiv}$. These results highlight the expected optimal performance of the proposed VEM approach and its robustness with respect to the adopted computational grid.

\begin{figure}[h!]
	\centering
	\subfigure[]{\includegraphics[width=0.45\textwidth,trim = 0mm 0mm 0mm 0mm, clip]{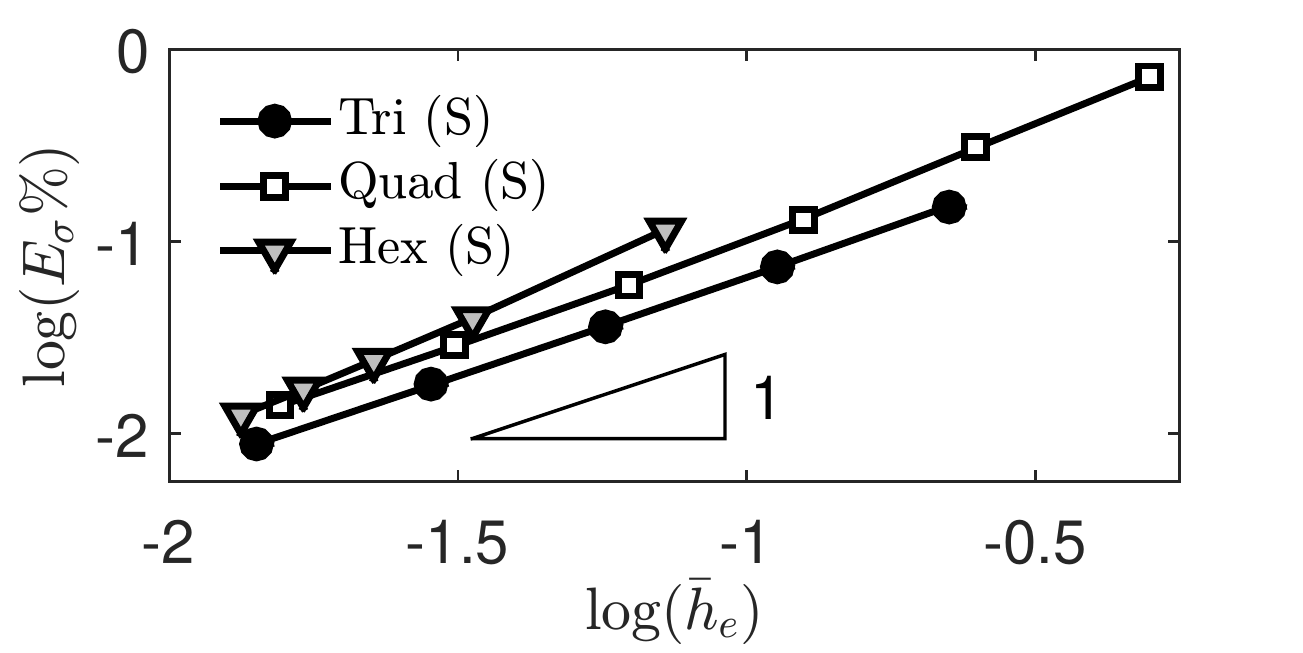}}
	\subfigure[]{\includegraphics[width=0.45\textwidth,trim = 0mm 0mm 0mm 0mm, clip]{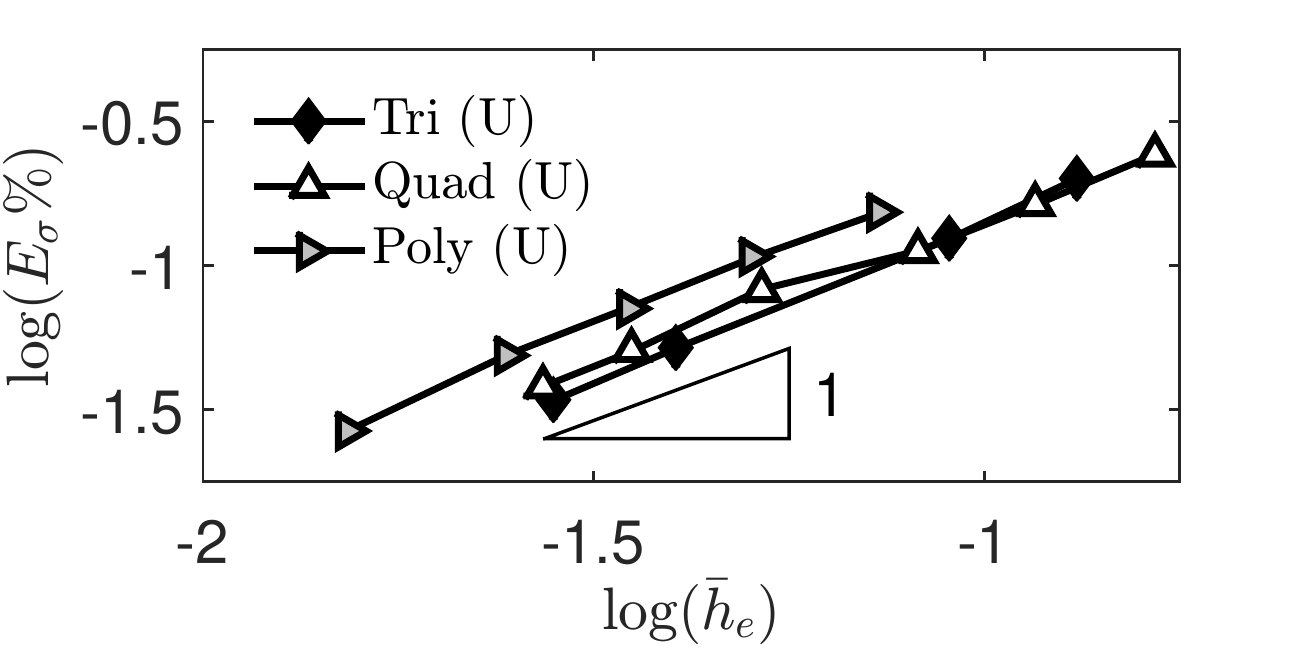}}\\
	\subfigure[]{\includegraphics[width=0.45\textwidth,trim = 0mm 0mm 0mm 0mm, clip]{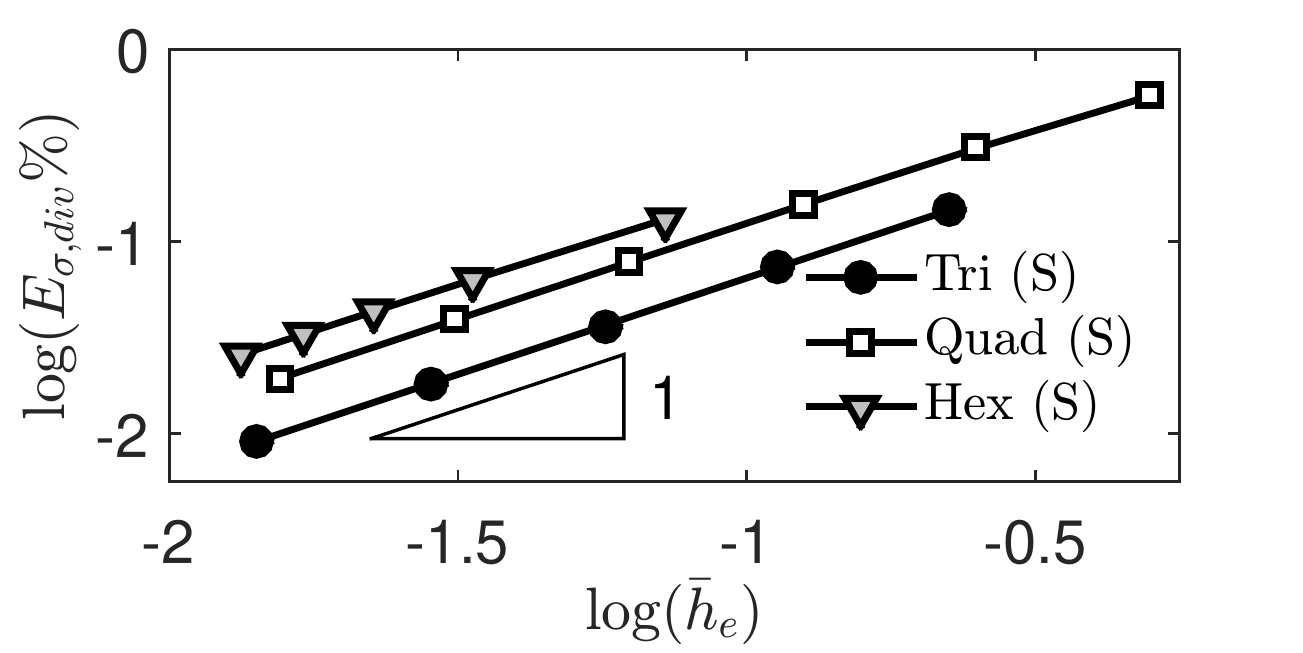}}
	\subfigure[]{\includegraphics[width=0.45\textwidth,trim = 0mm 0mm 0mm 0mm, clip]{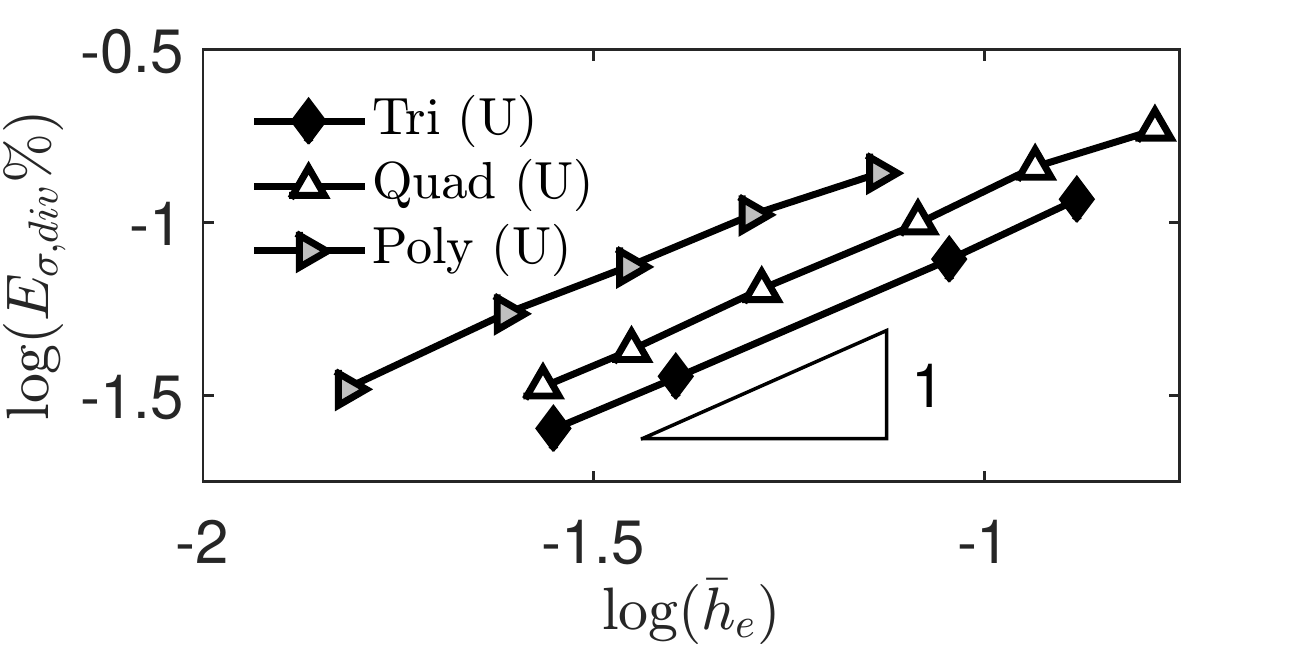}}\\
	\subfigure[]{\includegraphics[width=0.45\textwidth,trim = 0mm 0mm 0mm 0mm, clip]{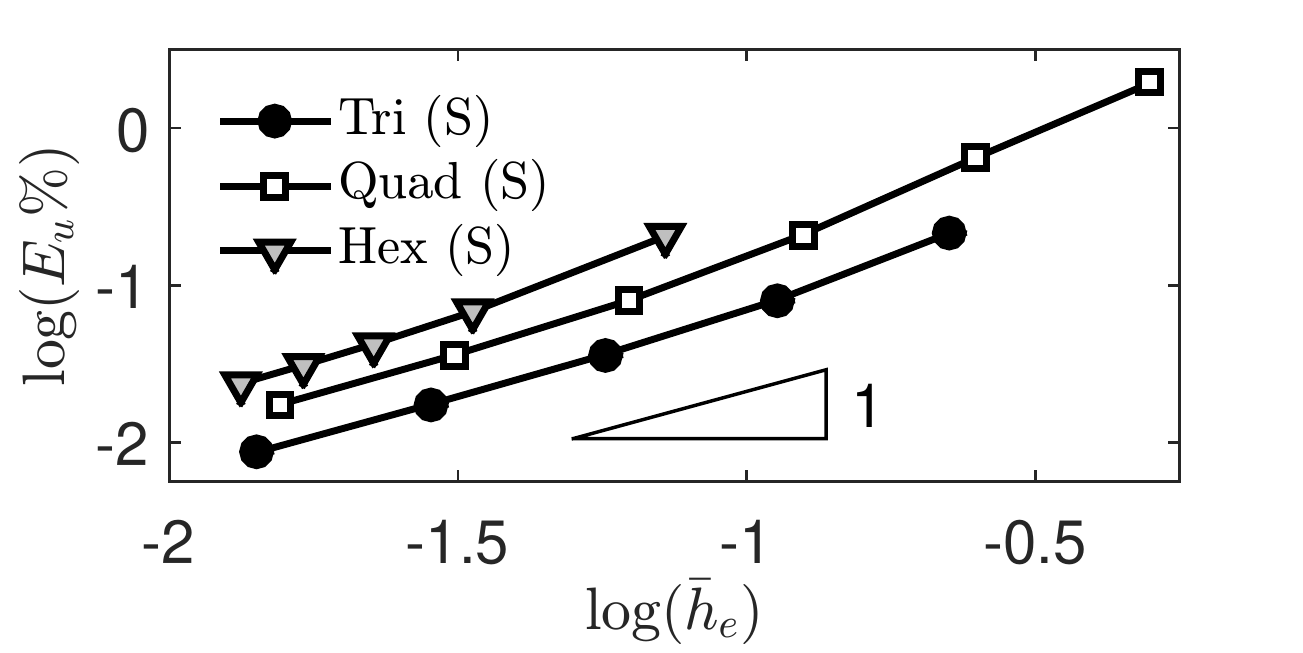}}
	\subfigure[]{\includegraphics[width=0.45\textwidth,trim = 0mm 0mm 0mm 0mm, clip]{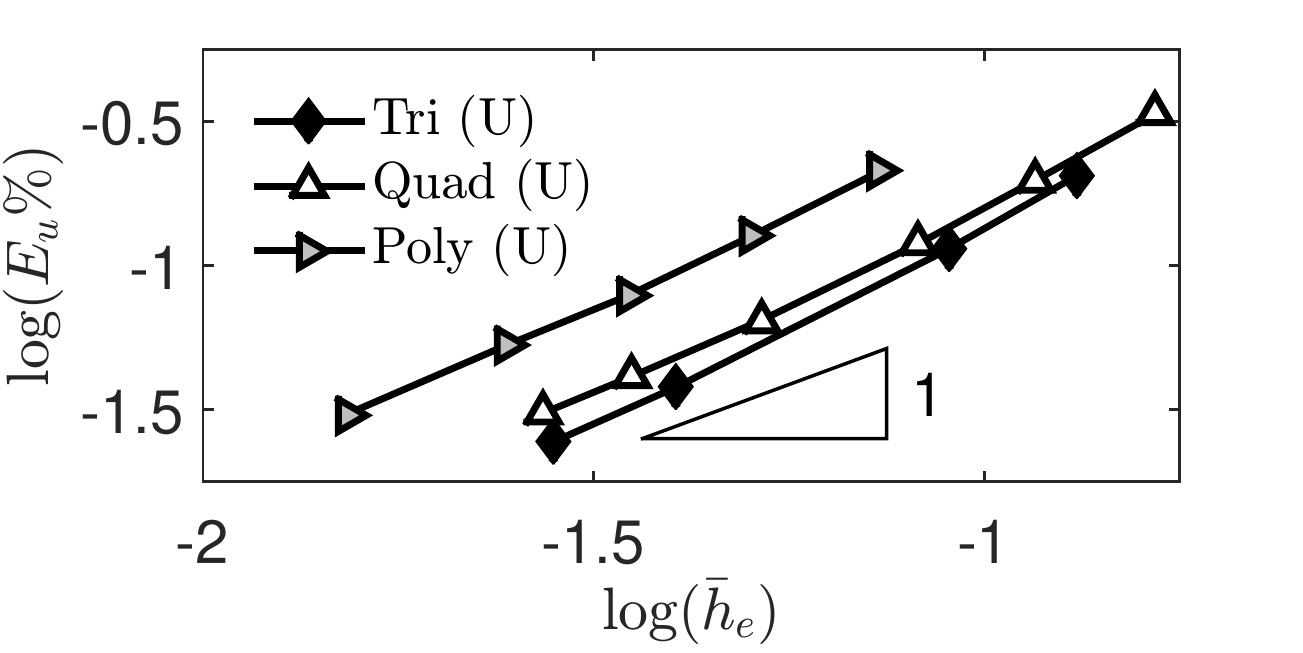}}\\
	\caption{$\bar{h}_e-$convergence results for Test $b$ on structured and unstructured meshes: (a) and (b) $E_{\bfsigma}$ error norm plots, (c) and (d) $E_{\bfsigma, \bdiv}$ error norm plots, (e) and (f) $E_{\bbu}$ error norm plots.}
	\label{fig:resuTestB}
\end{figure}

\subsubsection{Nearly incompressibility regime}\label{ss:near_inc}
A problem on the unit square domain $\Omega = [0, 1]^2$, with known analytical solution, is considered. A nearly incompressible material is chosen by selecting Lam\'e constants as $\lambda = 10^5$, $\mu = 0.5$.
The test is designed by choosing a required solution for the displacement field and deriving the load $\bbf$ accordingly. The displacement solution is as follows:

\begin{eqnarray}
	\label{eq:test_inc_sol}
	\left\{
	\begin{array}{l}
		u_1 = 0.5(\sin(2\pi x))^2\sin(2\pi y)\cos(2\pi y) \\
		u_2 = -0.5(\sin(2\pi y))^2\sin(2\pi x)\cos(2\pi x) .
	\end{array}
	\right .
\end{eqnarray}

Figure \ref{fig:resuTestC} reports the results obtained for both structured and unstructured meshes. In can be clearly seen that the proposed method shows the expected asymptotic rate of convergence also in this case.

\begin{figure}[h!]
	\centering
	\subfigure[]{\includegraphics[width=0.45\textwidth,trim = 0mm 0mm 0mm 0mm, clip]{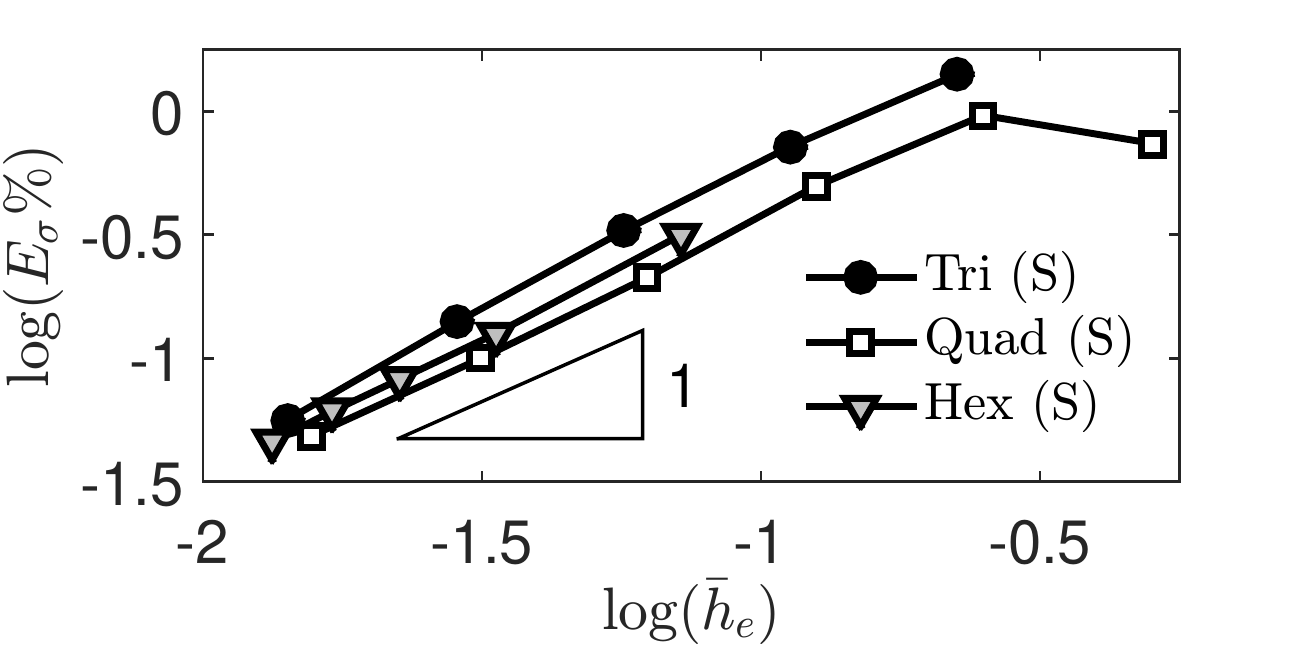}}
	\subfigure[]{\includegraphics[width=0.45\textwidth,trim = 0mm 0mm 0mm 0mm, clip]{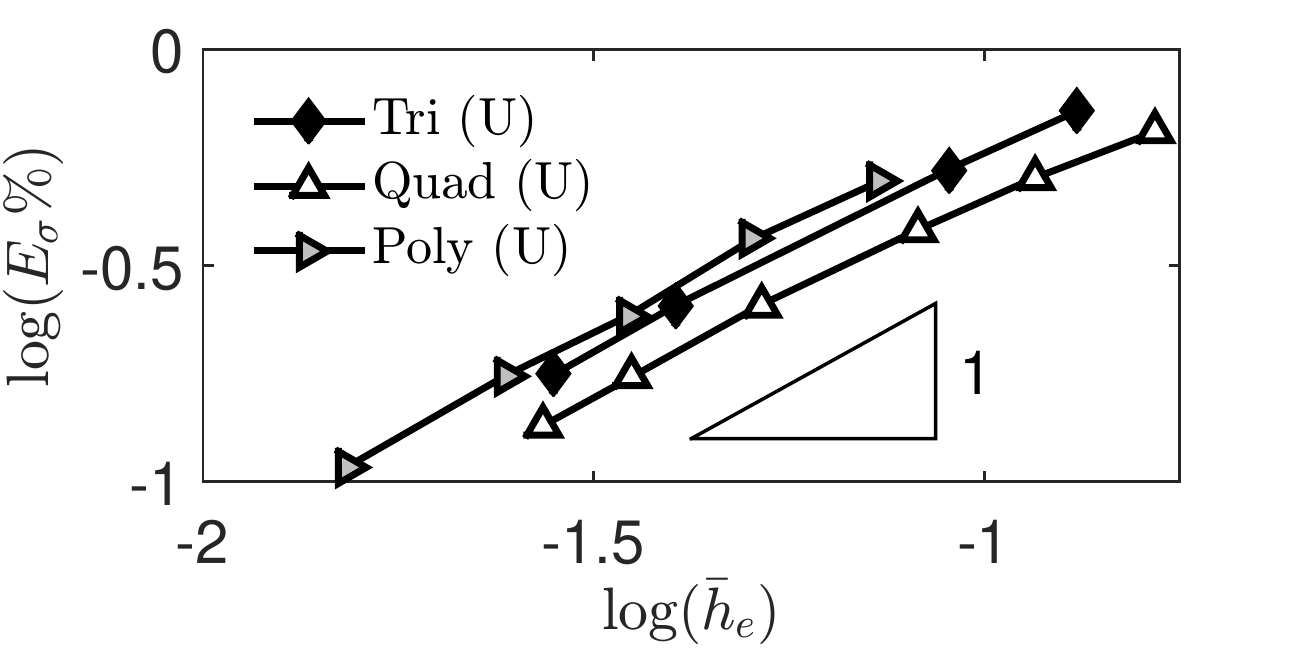}}\\
	\subfigure[]{\includegraphics[width=0.45\textwidth,trim = 0mm 0mm 0mm 0mm, clip]{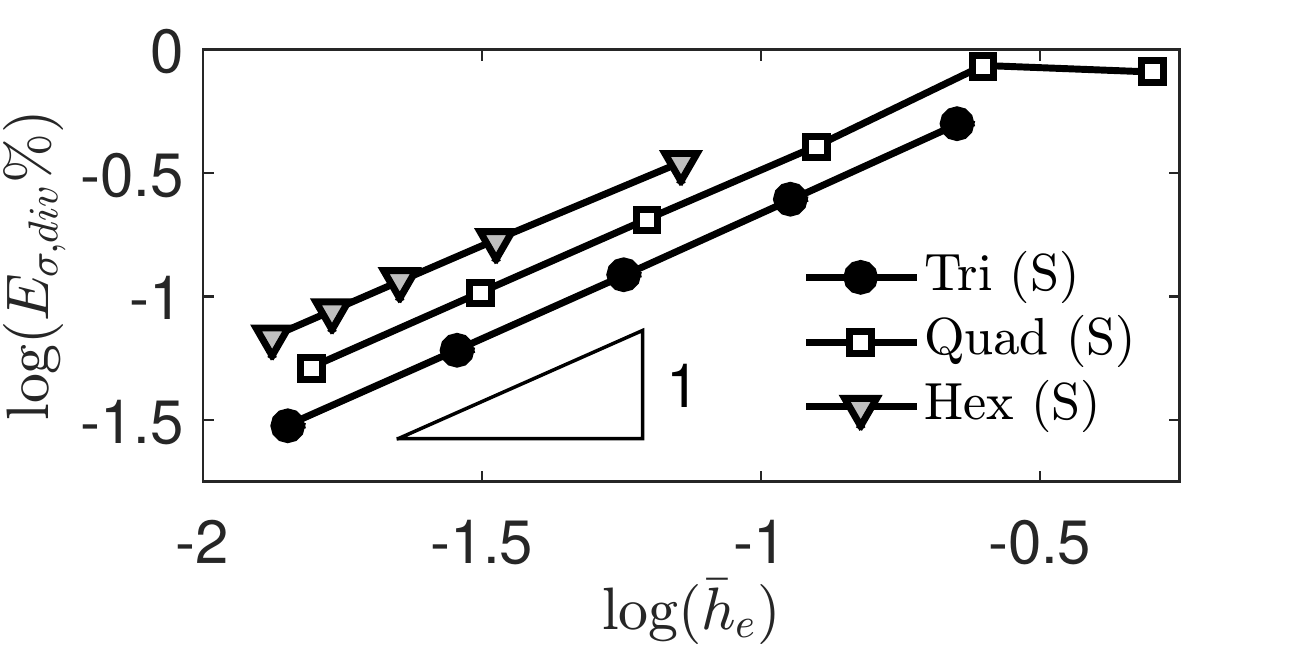}}
	\subfigure[]{\includegraphics[width=0.45\textwidth,trim = 0mm 0mm 0mm 0mm, clip]{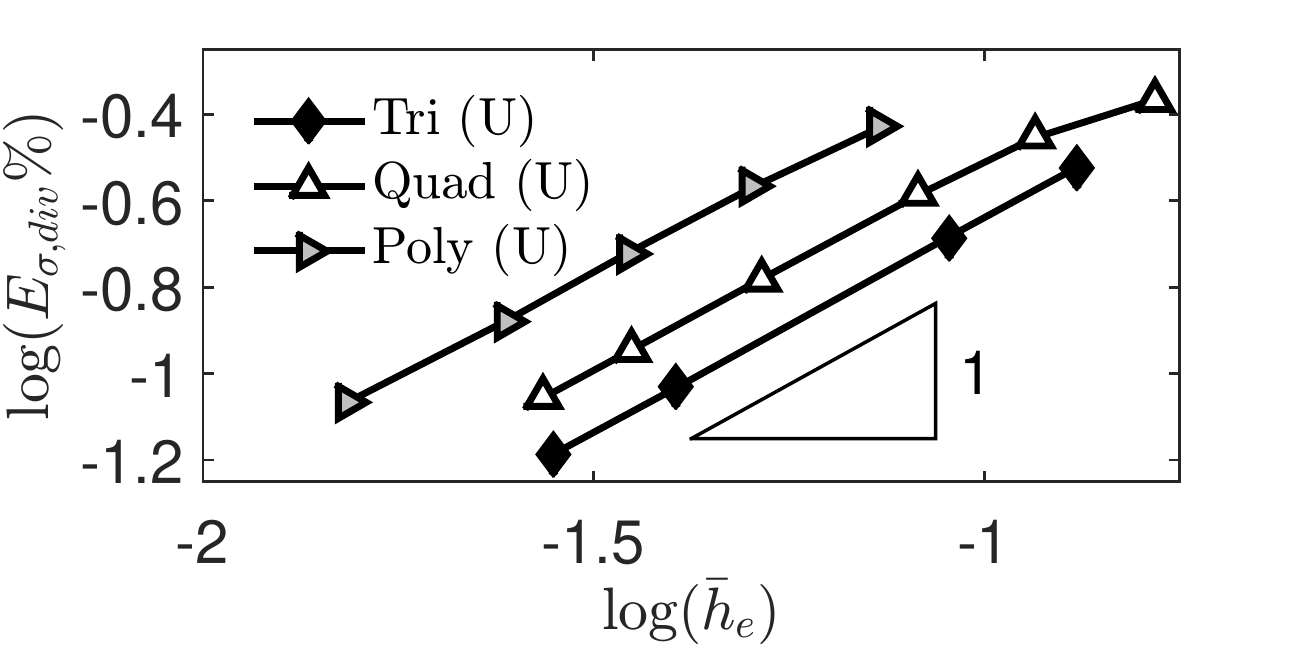}}\\
	\subfigure[]{\includegraphics[width=0.45\textwidth,trim = 0mm 0mm 0mm 0mm, clip]{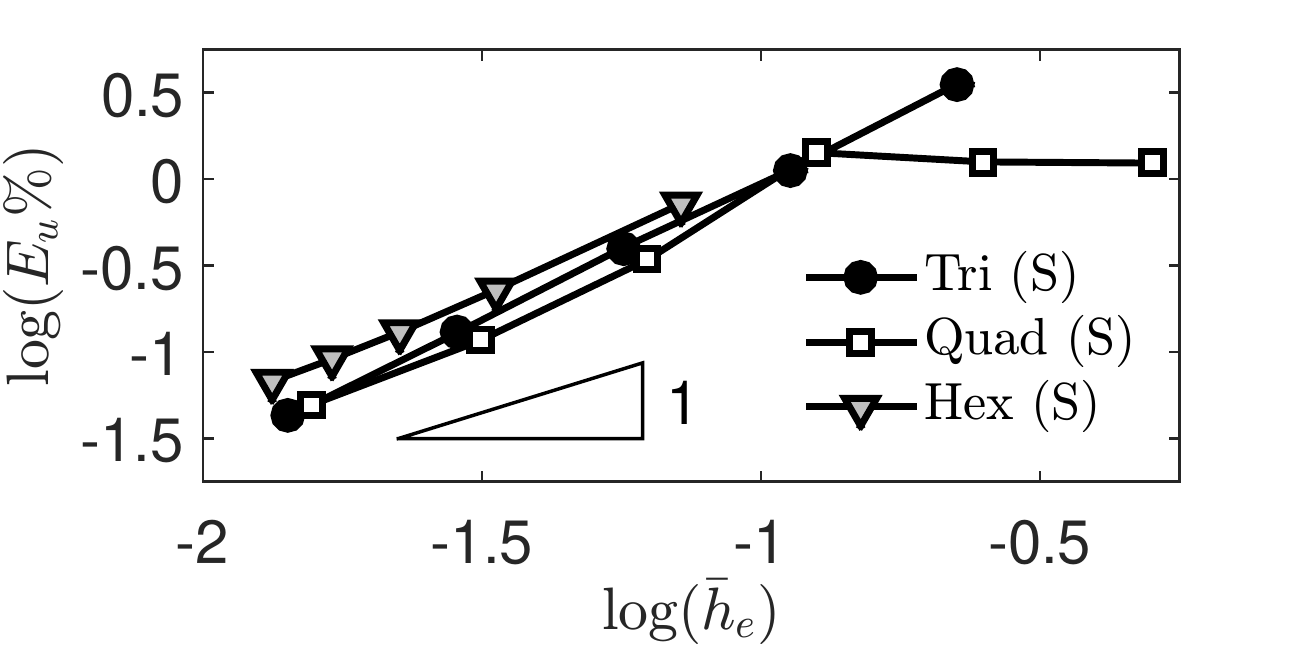}}
	\subfigure[]{\includegraphics[width=0.45\textwidth,trim = 0mm 0mm 0mm 0mm, clip]{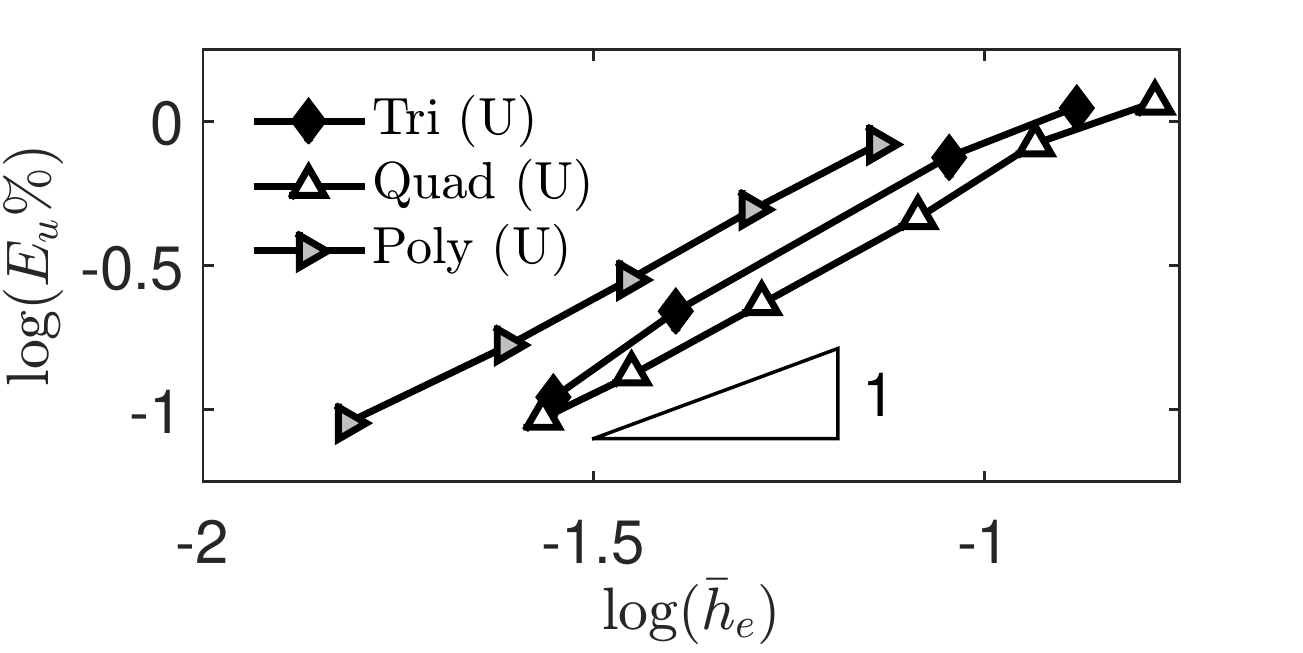}}\\
	\caption{Results for the nearly incompressible test on structured and unstructured meshes: (a) and (b) convergence of $E_{\bfsigma}$, (c) and (d) convergence of $E_{\bfsigma, \bdiv}$, (e) and (f) convergence for $E_{\bbu}$.}
	\label{fig:resuTestC}
\end{figure}

\subsection{Structural analysis benchmark: Cook's membrane}
\label{ss:cook}
The present section deals with the classical Cook's membrane 2D problem \cite{Zienckiewicz_Taylor2000}. The geometry of the domain $\O$ is presented in Fig. \ref{fig:Test_4_geom} with length data $H_1 = 44$, $H_2 = 16$, $L = 48$. The loading is given by a constant tangential traction $q = 6.25$ on the right edge of the domain. The Young modulus, $E$, is set equal to $70$ and two Poisson ratios are considered, one corresponding to $\nu = 1/3$ and one corresponding to a nearly incompressible case characterized by $\nu = 0.499995$.

\begin{figure}
	\begin{center}
		\includegraphics[bb = 160 18 600 580, scale=0.35]{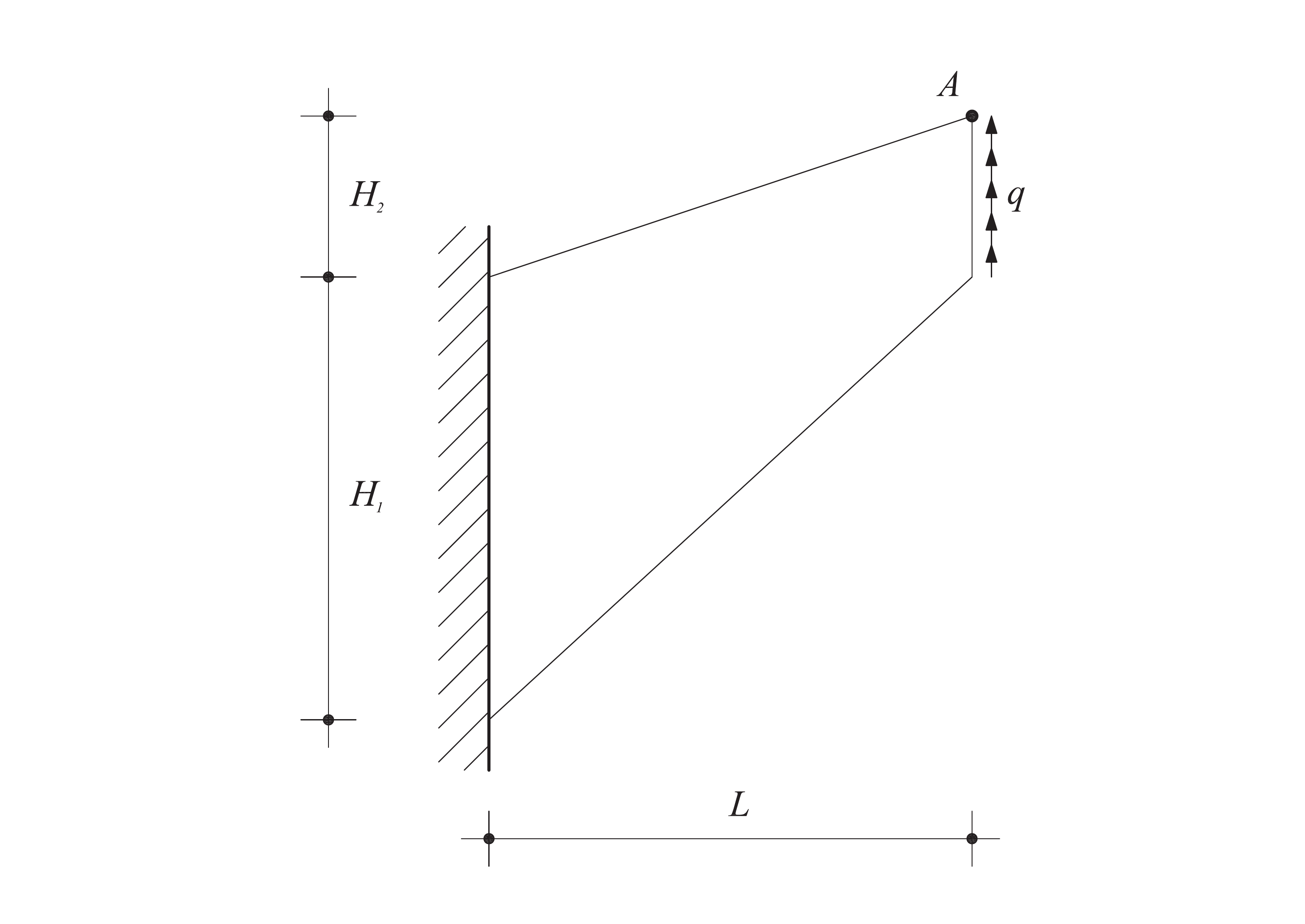}
	\end{center}
	\caption{
		Cook's membrane. Geometry, loading and boundary conditions.}
	\label{fig:Test_4_geom}
\end{figure}

The problem is solved using three types of meshes: an evenly distributed quadrilateral mesh denoted as Quad, a centroid based Voronoi tessellation, denoted as CVor, and a random based Voronoi tessellation indicated as RVor. An overview of the adopted meshes is reported in Fig. \ref{fig:cookMeshes}.
\begin{figure}[h!]
	\centering
	\renewcommand{\thesubfigure}{}
	\subfigure[Quad]{\includegraphics[width=0.32\textwidth,trim = 30mm 0mm 30mm 0mm, clip]{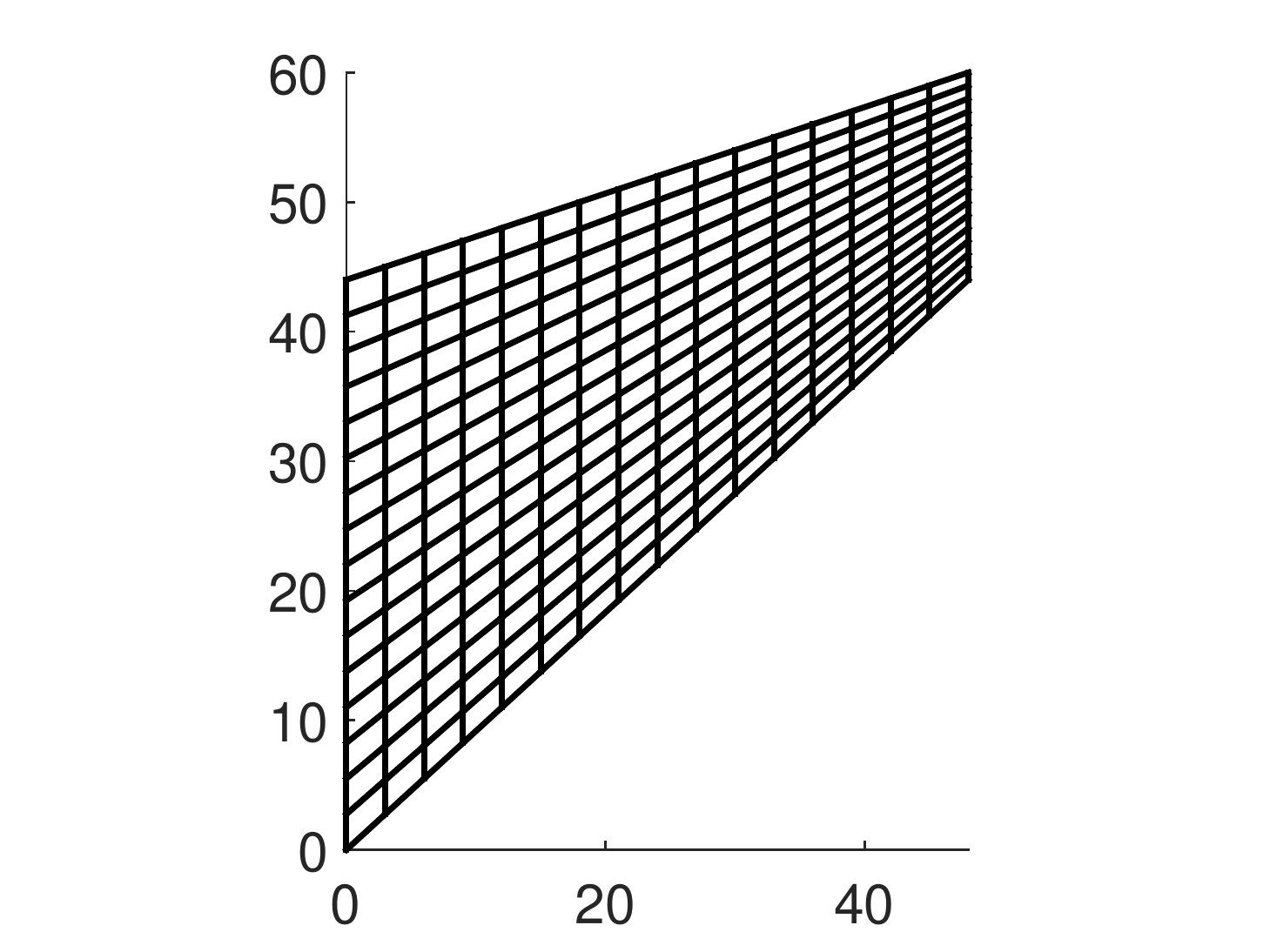}}
	\subfigure[CVor]{\includegraphics[width=0.32\textwidth,trim = 30mm 0mm 30mm 0mm, clip]{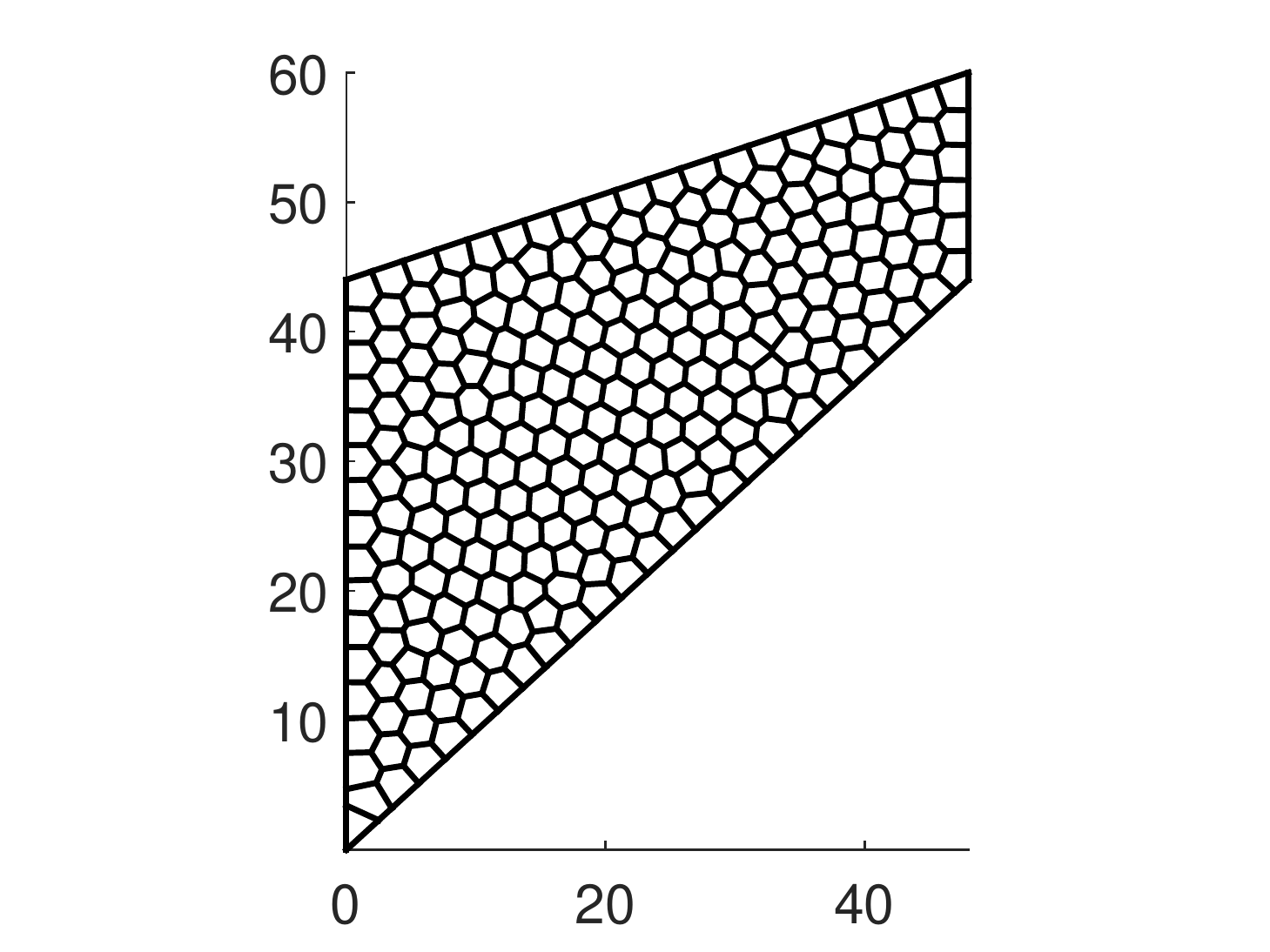}}
	\subfigure[RVor]{\includegraphics[width=0.32\textwidth,trim = 30mm 0mm 30mm 0mm, clip]{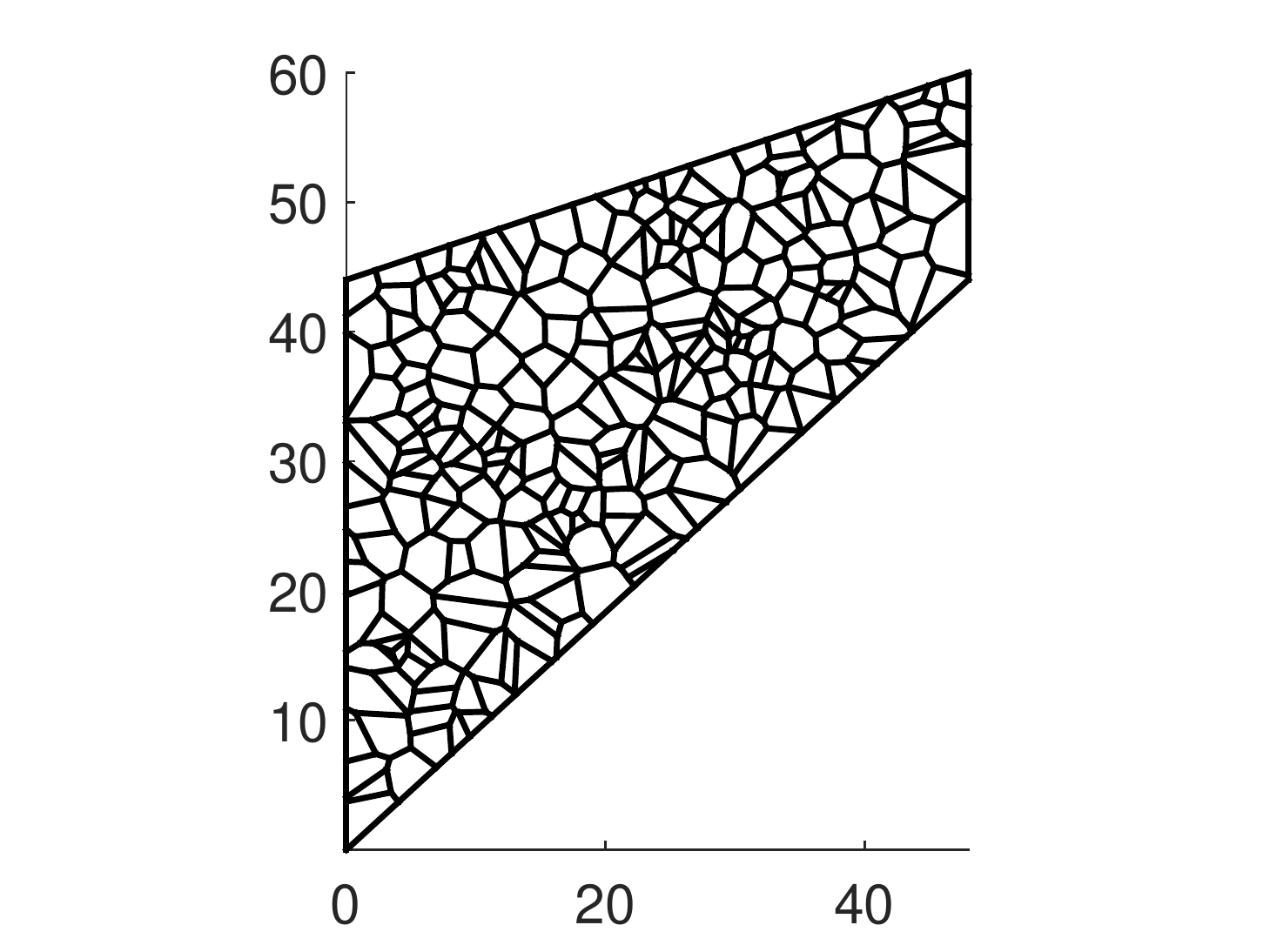}}
	\caption{Cook's membrane. Examples of the adopted meshes.}
	\label{fig:cookMeshes}
\end{figure}

Convergence results are reported in terms of mesh refinement monitoring $v_A$, the vertical displacement of point A (see Fig. \ref{fig:Test_4_geom}), approximated as the vertical displacement at the centroid of the closest polygon.
In particular, Fig. \ref{fig:cookConv}(a) corresponds to the case in which $\nu = 1/3$ while Fig. \ref{fig:cookConv}(b) reports the results obtained for the nearly incompressible case. The reference solution is indicated with a dotted red line corresponding to an overkilling accurate solution obtained with the hybrid-mixed CPE4I element \cite{ABAQUS:2011}. In accordance with the results of Section \ref{ss:near_inc}, it can be clearly observed that the proposed formulation is robust with respect to the compressibility parameter, as the convergence behaviour of both cases (a) and (b) is almost the same.

\begin{figure}[h!]
	\centering
	\subfigure[]{\includegraphics[width=0.48\textwidth,trim = 0mm 0mm 10mm 0mm, clip]{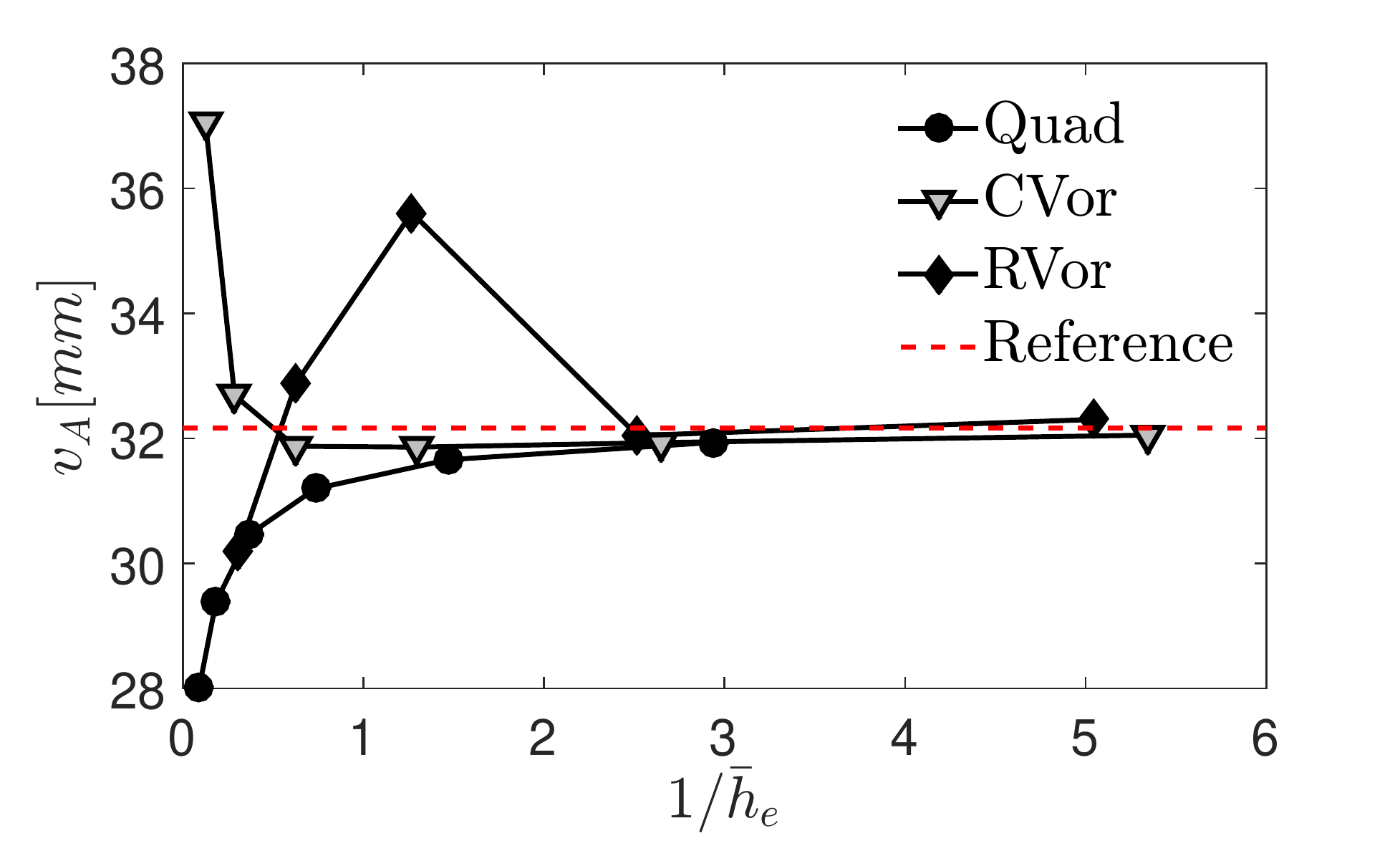}}
	\subfigure[]{\includegraphics[width=0.48\textwidth,trim = 0mm 0mm 10mm 0mm, clip]{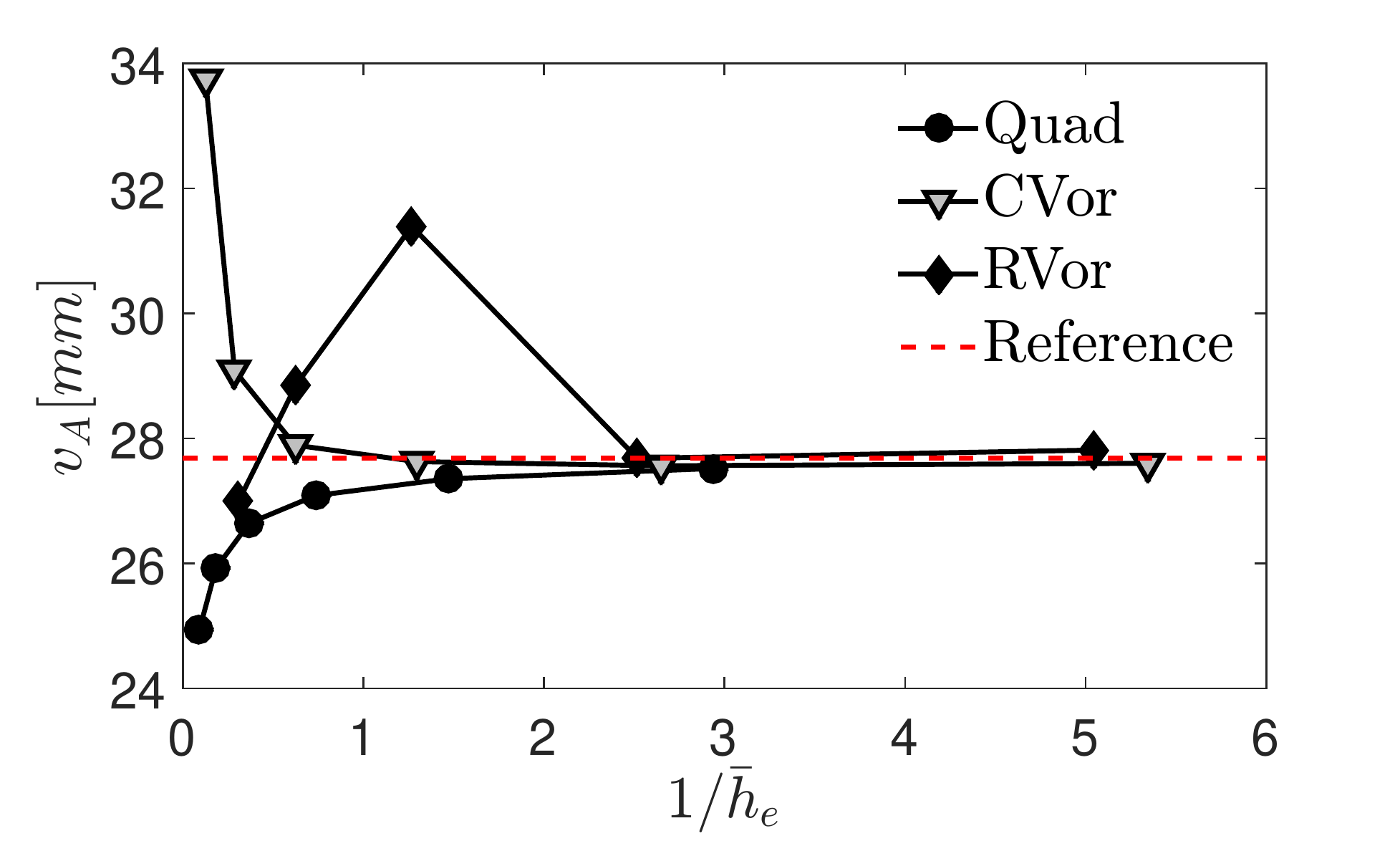}}
	\caption{Convergence of the tip vertical displacement $v_A$: (a) $\nu = 1/3$ and (b) $\nu = 0.499995$.}
	\label{fig:cookConv}
\end{figure}

Finally, contours representing the von Mises equivalent stress distributions are reported in Fig. \ref{fig:cookConv}. We remark that, inside the polygons, the stress distribution $\bfsigma_h$ is not known, but its projection $\Pi_{E}\bfsigma_h$ onto the constant tensors is (cf. \eqref{eq:proj}). Thus, we have used this latter quantity to compute the von Mises equivalent stress displayed in Fig. \ref{fig:cookConv}.
Finally, the results refer to the case $\nu = 1/3$, being the nearly incompressible case extremely similar.

\begin{figure}[h!]
	\centering
	\subfigure[]{\includegraphics[height=0.38\textwidth,trim = 20mm 0mm 43mm 0mm, clip]{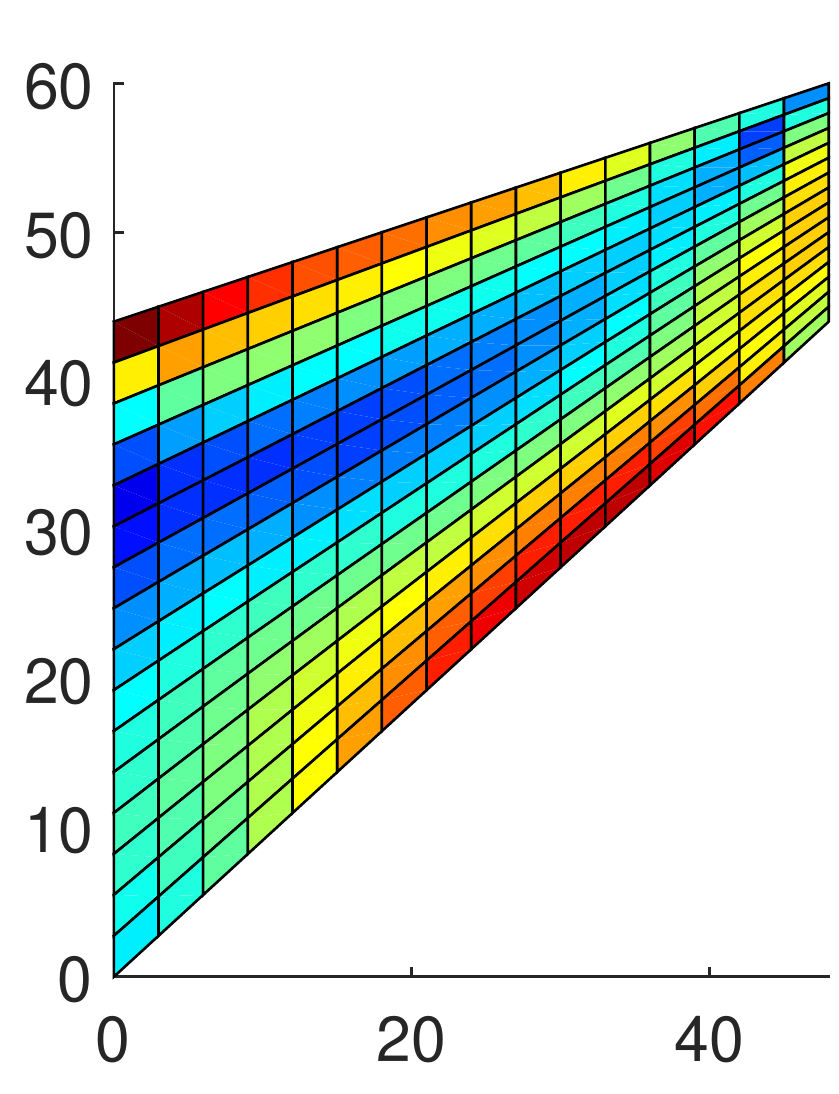}}
	\subfigure[]{\includegraphics[height=0.38\textwidth,trim = 20mm 0mm 43mm 0mm, clip]{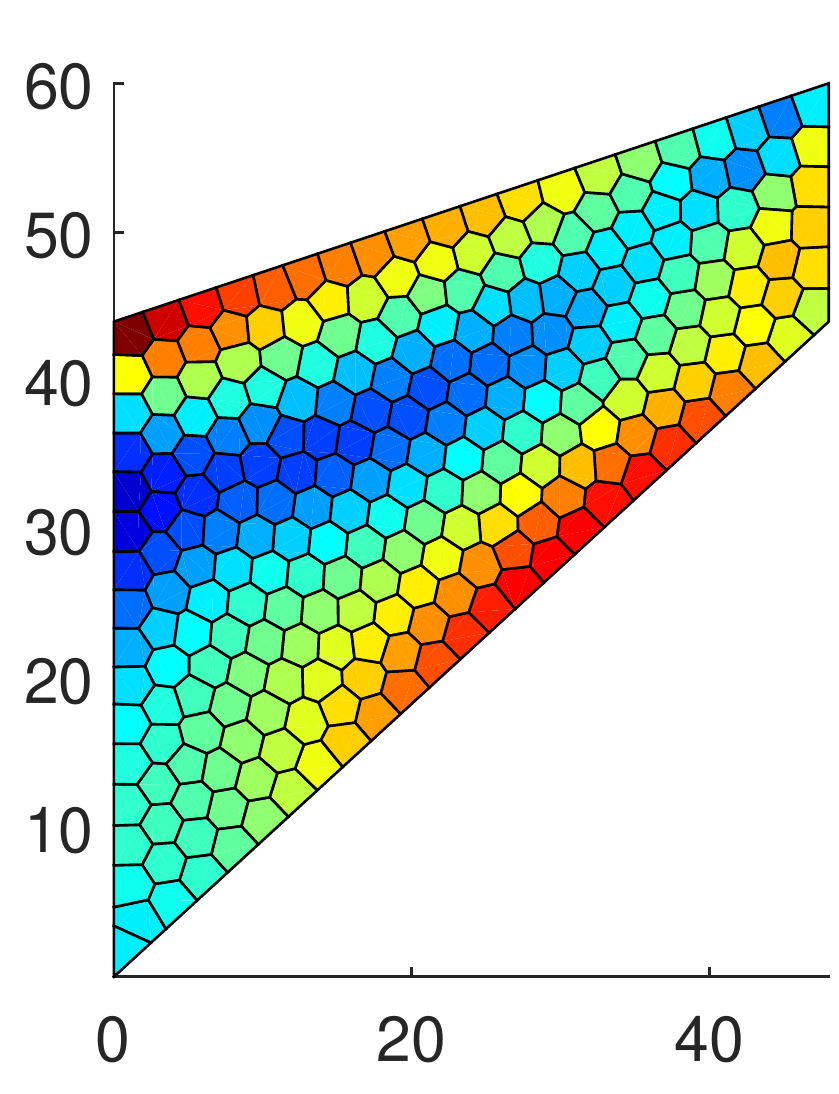}}
	\subfigure[]{\includegraphics[height=0.38\textwidth,trim = 20mm 0mm 20mm 0mm, clip]{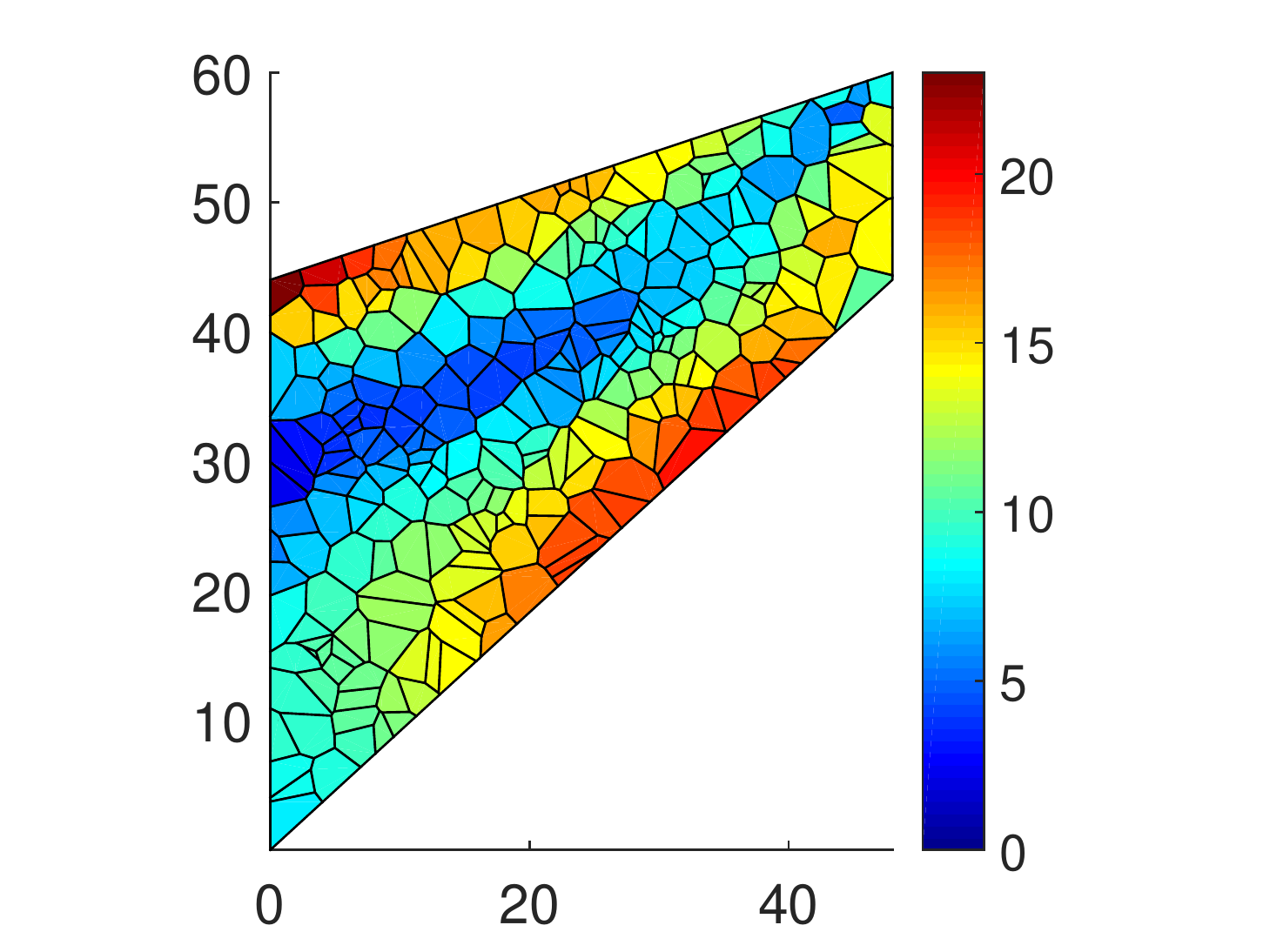}}
	\caption{Contours representing the von Mises equivalent stress distributions for $\nu = 1/3$: (a) Quad, (b) CVor, (c) RVor.}
	\label{fig:cookMises}
\end{figure}



\section{Stability and convergence analysis}\label{s:theoretical}

In this section, we provide a rigorous analysis of the proposed VEM method.
For all $E\in \Th$, we first introduce the space:

\begin{equation}\label{eq:sigma-comp}
\widetilde\Sigma(E):=\left\{ \bftau\in H(\bdiv;E)\ : \ \exists \bbw\in H^1(E)^2 \mbox{ \rm such that } \bftau=\C\teps(\bbw) \right\} .
\end{equation}

The global space $\widetilde\Sigma$ is defined as

\begin{equation}\label{eq:sigma-glob}
\widetilde\Sigma:=\left\{ \bftau\in H(\bdiv;\Omega)\ : \ \exists \bbw\in H^1(\O)^2 \mbox{ \rm such that } \bftau=\C\teps(\bbw) \right\} .
\end{equation}

In the sequel, given a measurable subset $\omega\subseteq \Omega$ and $r > 2$, we will use the space

\begin{equation}\label{eq:regspace}
W^r(\omega):=\left\{  \bftau  \ : \bftau\in L^r(\omega)^{2\times 2}_s \ , \ \bdiv\bftau\in L^2(\omega)^2    \right\} ,
\end{equation}
equipped with the obvious norm. Under our assumptions on the mesh, we recall the following version of the Korn's inequality:

\begin{equation}\label{eq:korn}
\inf_{\bbr\in RM(E)}\left(  h_E^{-1}||\bbv-\bbr ||_{0,E} + |\bbv-\bbr |_{1,E}  \right) \lesssim ||\teps(\bbv) ||_{0,E} \qquad \forall \bbv \in H^1(E)^2 .
\end{equation}
Given $\bbv\in H^1(E)^2$, the above inequality can be derived by classical results (see \cite{Oleinik-Kondratiev}, for instance), and by choosing $\bbr_\bbv\in RM(E)$ such that $\int_E(	\bbv -\bbr_\bbv)=\bfzero$.

We will also use the following result.

\begin{lem}\label{lem:aux} Suppose that assumptions $\mathbf{(A1)}$ and $\mathbf{(A2)}$ are fulfilled.
Given $E\in \Th$, let $\bbw\in H^1(E)^2$ be a solution of the problem:

\begin{equation}\label{eq:aux1}
\left\lbrace{
	\begin{aligned}
	&-\bdiv(\C\teps(\bbw))=\bbg\qquad & &\mbox{\rm in $E$}\\
	&(\C\teps(\bbw))\bbn =\bbh&& \mbox{\rm on $\partial E$} ,
	\end{aligned}
} \right.
\end{equation}
where $\bbg\in L^2(E)^2$ and $\bbh\in L^2(\partial E)^2$ satisfy the compatibility condition

\begin{equation}\label{eq:aux2}
\int_E \bbg\cdot\bbr + \int_{\partial E} \bbh\cdot\bbr = 0 \qquad \forall\bbr\in RM(E) .
\end{equation}
Then it holds:

\begin{equation}\label{eq:aux3}
||\C \teps(\bbw)||_{0,E}\lesssim h_E || \bbg||_{0,E} + h_E^{1/2} || \bbh||_{0,\partial E} .
\end{equation}
	
\end{lem}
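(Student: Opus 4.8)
The natural strategy is an energy estimate: test the weak form of~\eqref{eq:aux1} against $\bbw$ itself (suitably normalized), use Korn's inequality~\eqref{eq:korn} to control $\bbw$ by $\teps(\bbw)$, and then absorb the boundary term by a trace inequality. First I would normalize: since~\eqref{eq:aux1} determines $\bbw$ only up to a rigid motion, I may assume $\int_E \bbw = \bfzero$ (or more precisely, choose the representative $\bbw$ with $\int_E(\bbw-\bbr_\bbw)=\bfzero$ in the sense used right after~\eqref{eq:korn}), so that Korn's inequality gives $h_E^{-1}\|\bbw\|_{0,E} + |\bbw|_{1,E} \lesssim \|\teps(\bbw)\|_{0,E}$. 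Testing the first equation of~\eqref{eq:aux1} against $\bbw$ and integrating by parts, using the Neumann condition, yields
\begin{equation}
\int_E \C\teps(\bbw):\teps(\bbw) = \int_E \bbg\cdot\bbw + \int_{\partial E}\bbh\cdot\bbw .
\end{equation}
(The compatibility condition~\eqref{eq:aux2} is exactly what makes the right-hand side independent of the additive rigid motion, so this is consistent with the normalization.)

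Next I would estimate the right-hand side. By positive-definiteness of $\C$, the left side is $\gtrsim \|\teps(\bbw)\|_{0,E}^2$; note here one must keep track that the hidden constant depends only on $\C$, not on $E$ or $h_E$, which is fine since $\C$ is piecewise constant. For the first term on the right, Cauchy--Schwarz and Korn give $\int_E \bbg\cdot\bbw \le \|\bbg\|_{0,E}\|\bbw\|_{0,E} \lesssim h_E\|\bbg\|_{0,E}\|\teps(\bbw)\|_{0,E}$. For the boundary term, I need a scaled trace inequality on the polygon $E$: $\|\bbw\|_{0,\partial E} \lesssim h_E^{-1/2}\|\bbw\|_{0,E} + h_E^{1/2}|\bbw|_{1,E}$, which holds under~$\mathbf{(A1)}$--$\mathbf{(A2)}$ (standard scaling argument on star-shaped domains, e.g.\ via the reference-element/scaling technique or a direct divergence-theorem estimate). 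Combining with Korn, $\|\bbw\|_{0,\partial E} \lesssim h_E^{1/2}\|\teps(\bbw)\|_{0,E}$, hence $\int_{\partial E}\bbh\cdot\bbw \le \|\bbh\|_{0,\partial E}\|\bbw\|_{0,\partial E} \lesssim h_E^{1/2}\|\bbh\|_{0,\partial E}\|\teps(\bbw)\|_{0,E}$.

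Putting these together gives $\|\teps(\bbw)\|_{0,E}^2 \lesssim \big(h_E\|\bbg\|_{0,E} + h_E^{1/2}\|\bbh\|_{0,\partial E}\big)\|\teps(\bbw)\|_{0,E}$, so dividing by $\|\teps(\bbw)\|_{0,E}$ (trivial if it vanishes) yields $\|\teps(\bbw)\|_{0,E} \lesssim h_E\|\bbg\|_{0,E} + h_E^{1/2}\|\bbh\|_{0,\partial E}$, and since $\|\C\teps(\bbw)\|_{0,E} \lesssim \|\teps(\bbw)\|_{0,E}$ by boundedness of $\C$, we obtain~\eqref{eq:aux3}. The main obstacle, and the only place the geometric assumptions~$\mathbf{(A1)}$--$\mathbf{(A2)}$ genuinely enter, is establishing the scaled trace inequality (and, implicitly, Korn's inequality~\eqref{eq:korn}) with a constant uniform over the mesh family: one must verify that the star-shapedness with respect to a ball of radius $\gtrsim\gamma h_E$ and the vertex-separation condition prevent degeneration of the trace constant under refinement. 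Everything else is routine integration by parts and Cauchy--Schwarz. One should also take a moment to check that the weak formulation of~\eqref{eq:aux1} is well-posed under~\eqref{eq:aux2} so that testing against $\bbw$ is legitimate — this is the standard Lax--Milgram/Fredholm argument on the quotient space $H^1(E)^2/RM(E)$.
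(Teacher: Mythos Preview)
Your proof is correct and essentially identical to the paper's: the paper tests against $\bbw-\bbr$ for arbitrary $\bbr\in RM(E)$, applies Cauchy--Schwarz, the scaled trace inequality (which it calls Agmon's inequality), and then Korn's inequality~\eqref{eq:korn} in its infimum form, whereas you normalize $\bbw$ by a rigid motion up front and then apply the same three ingredients. The two presentations differ only in bookkeeping.
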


\begin{proof}
For every $\bbr\in RM(E)$, we have

\begin{equation}\label{eq:aux4}
\begin{aligned}
|| \C\teps(\bbw)||_{0,E}^2 &\lesssim
\int_E \C\teps(\bbw) : \teps(\bbw)\\
&= \int_E \C\teps(\bbw) : \teps(\bbw - \bbr)
=\int_E \bbg\cdot ( \bbw-\bbr) + \int_{\partial E}\bbh\cdot (\bbw-\bbr) ,
\end{aligned}
\end{equation}
by which we get

\begin{equation}\label{eq:aux5}
\begin{aligned}
|| \C\teps(\bbw)||_{0,E}^2 \lesssim || \bbg ||_{0,E}\, || \bbw-\bbr ||_{0,E} + || \bbh ||_{0,\partial E}\, || \bbw-\bbr ||_{0,\partial E} .
\end{aligned}
\end{equation}
Under assumptions $\mathbf{(A1)}$ and $\mathbf{(A2)}$, the Agmon's inequality then gives

\begin{equation}\label{eq:aux6}
\begin{aligned}
||\C \teps(\bbw)||_{0,E}^2 & \lesssim || \bbg ||_{0,E}\, || \bbw-\bbr ||_{0,E}\\
&+ || \bbh ||_{0,\partial E} \left( h_E^{-1/2}|| \bbw-\bbr ||_{L^2( E)} +  h_E^{1/2}| \bbw-\bbr |_{H^1( E)}  \right).
\end{aligned}
\end{equation}
Estimate \eqref{eq:aux3} now follows from \eqref{eq:korn}.

\end{proof}

\subsection{An interpolation operator for stresses}\label{ss:interpol-oper}

We now introduce the local interpolation operator $\IE : W^r(E)\to  \Sigma_h(E)$, defined as follows. Given $\bftau\in W^r(E)$, $\IE\bftau\in \Sigma_h(E)$ is determined by:

\begin{equation}\label{eq:loc-interp_0}
\int_{\partial E} (\IE \bftau) \bbn\cdot \bfvarphi_\ast = \int_{\partial E}  \bftau\bbn\cdot \bfvarphi_\ast \qquad \forall \bfvarphi_\ast\in  R_\ast(\partial E) ,
\end{equation}
where

\begin{equation}\label{eq:Rast}
R_\ast(\partial E) = \left\{
\bfvarphi_\ast\in L^2(\partial E)^2 \,: \,
\bfvarphi_{\ast | e} = \bfgamma_e + \delta_e (\bbx -\bbx_C)^\perp \quad \bfgamma_e\in\R^2, \  \delta_e\in\R, \ \forall e\in\partial E  \right\}.
\end{equation}
If $\bftau$ is not sufficiently regular, the integral in the right-hsnd side of \eqref{eq:loc-interp_0} is intended as a duality between $W^{-\frac{1}{r},r}(\partial E)^2$ and $W^{\frac{1}{r},r'}(\partial E)^2$. If $\bftau$ is a regular function, the above condition is equivalent to require:

\begin{equation}\label{eq:loc-interp}
\left\lbrace{
\begin{aligned}
&\int_e (\IE \bftau) \bbn = \int_e  \bftau\bbn \qquad \forall e\in \partial E ;\\
&\int_e (\IE \bftau) \bbn\cdot (\bbx - \bbx_C)^\perp = \int_e  \bftau\bbn \cdot (\bbx - \bbx_C)^\perp \qquad \forall e\in \partial E .
\end{aligned}
} \right.
\end{equation}

The following result shows, in particular, that $\IE\bftau\in\Sigma_h(E)$ is well-defined by conditions \eqref{eq:loc-interp_0}.

\begin{lem}\label{lm:local-inter-well}
If $\bftau_h\in\Sigma_h(E)$, then

\begin{equation}\label{eq:interpol_cond}
\int_{\partial E} \bftau_h \bbn\cdot \bfvarphi_\ast = 0  \qquad \forall \bfvarphi_\ast\in  R_\ast(\partial E)
\end{equation}

imply $\bftau_h=\bfzero$.

\end{lem}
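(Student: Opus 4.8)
The plan is to reduce the global condition \eqref{eq:interpol_cond} to an edge-by-edge statement, to exploit the explicit form \eqref{eq:edge_approx} of the normal traction so as to deduce that $\bftau_h\bbn$ vanishes on all of $\partial E$, and finally to use the defining properties of $\Sigma_h(E)$ --- together with a single integration by parts, or equivalently Lemma \ref{lem:aux} --- to conclude that $\bftau_h=\bfzero$.

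First I would observe that $R_\ast(\partial E)$ decomposes as a direct sum over the edges: in \eqref{eq:Rast} the coefficients $\bfgamma_e\in\R^2$ and $\delta_e\in\R$ may be prescribed freely and independently on each $e\in\partial E$ (taking $\bfgamma_e=\bfzero$, $\delta_e=0$ on the remaining edges). Hence \eqref{eq:interpol_cond} is equivalent to
\[
\int_e \bftau_h\bbn=\bfzero \qquad\text{and}\qquad \int_e (\bftau_h\bbn)\cdot(\bbx-\bbx_C)^\perp=0 \qquad\text{for every }e\in\partial E ,
\]
that is, to \eqref{eq:loc-interp} with vanishing right-hand sides. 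Now fix an edge $e$. Since $\bftau_h\in\Sigma_h(E)$, by \eqref{eq:edge_approx} we may write $(\bftau_h\bbn)_{|e}=\bbc_e+d_e\,s\,\bbn$ with $\bbc_e\in\R^2$, $d_e\in\R$ and $s$ the local coordinate of \eqref{eq:edge_approx}. As $s$ has zero mean on $e$, the first condition gives $|e|\,\bbc_e=\bfzero$, hence $\bbc_e=\bfzero$. For the second condition I would parametrise the edge by $\bbx=\bbx_m^e+|e|\,s\,\bbt_e$, with $\bbx_m^e$ its midpoint and $\bbt_e$ the unit tangent; since the fixed $90^\circ$ rotation $(\cdot)^\perp$ maps $\bbt_e$ onto $\pm\bbn$, one has $\bbn\cdot(\bbx-\bbx_C)^\perp=\text{(const)}\pm|e|\,s$ on $e$. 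Inserting $\bbc_e=\bfzero$ and using $\int_{-1/2}^{1/2}s\,\mathrm{d}s=0$ and $\int_{-1/2}^{1/2}s^2\,\mathrm{d}s=\tfrac{1}{12}$, the second condition reduces to $\pm\tfrac{1}{12}\,d_e\,|e|^2=0$, so $d_e=0$. Therefore $\bftau_h\bbn=\bfzero$ on every edge, hence on all of $\partial E$.

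It then remains to show that any $\bftau_h\in\Sigma_h(E)$ with $\bftau_h\bbn\equiv\bfzero$ on $\partial E$ vanishes. Taking $\bfvarphi\equiv\bfzero$ in the compatibility identity \eqref{eq:compat} gives $\int_E(\bdiv\bftau_h)\cdot\bbr=0$ for all $\bbr\in RM(E)$; since $\bdiv\bftau_h\in RM(E)$ by definition of $\Sigma_h(E)$, the choice $\bbr=\bdiv\bftau_h$ forces $\bdiv\bftau_h=\bfzero$ in $E$. Writing $\bftau_h=\C\teps(\bbw^\ast)$ with $\bbw^\ast\in H^1(E)^2$ and integrating by parts (using $\bftau_h=\C\teps(\bbw^\ast)$ and the symmetry of $\bftau_h$),
\[
\int_E \C\teps(\bbw^\ast):\teps(\bbw^\ast)=\int_E \bftau_h:\nabla\bbw^\ast=-\int_E(\bdiv\bftau_h)\cdot\bbw^\ast+\int_{\partial E}(\bftau_h\bbn)\cdot\bbw^\ast=0 ,
\]
so the positive-definiteness of $\C$ gives $\teps(\bbw^\ast)=\bfzero$ and hence $\bftau_h=\bfzero$. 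Equivalently, one may invoke Lemma \ref{lem:aux} with $\bbg=\bfzero$, $\bbh=\bfzero$ (for which the compatibility condition \eqref{eq:aux2} holds trivially) to obtain $||\C\teps(\bbw^\ast)||_{0,E}=0$ at once.

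The argument is essentially routine; the only point requiring a little care is the edge identity $\bbn\cdot(\bbx-\bbx_C)^\perp=\text{(const)}\pm|e|\,s$, which is precisely what guarantees that the second interpolation condition genuinely controls the slope $d_e$ rather than degenerating. The remaining ingredients --- the direct-sum structure of $R_\ast(\partial E)$, the compatibility identity \eqref{eq:compat}, and one integration by parts --- are standard.
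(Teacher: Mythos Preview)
Your proof is correct and follows essentially the same route as the paper: reduce the interpolation conditions to edge-wise identities, exploit the structure of $R(e)$ to first kill $\bbc_e$ and then $d_e$ via the nondegeneracy of $\bbn\cdot(\bbx-\bbx_C)^\perp$ along each edge. You additionally spell out why vanishing normal traces on $\partial E$ force $\bftau_h=\bfzero$ (via \eqref{eq:compat} and an integration by parts, or equivalently Lemma~\ref{lem:aux}), a step the paper leaves implicit since the $\bbc_e$, $d_e$ already serve as the degrees of freedom of $\Sigma_h(E)$.
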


\begin{proof}
First, recall that for $\bftau_h\in\Sigma_h(E)$ it holds $(\bftau_h\bbn)_{|e}= \bbc_e + d_e s\,\bbn$ for each edge $e\in\partial E$, cf. \eqref{eq:edge_approx} and \eqref{eq:local_stress}. 	
By \eqref{eq:interpol_cond}, choosing $\bfvarphi_\ast$ such that $\bfvarphi_{\ast|e}=\bfgamma_e$ for each $e\in\partial E$, it follows that $(\bftau_h\bbn)_{|e}= d_e s\,\bbn$. Choosing now $\bfvarphi_{\ast|e}=\delta_e (\bbx -\bbx_C)^\perp$, conditions \eqref{eq:interpol_cond} then give

\begin{equation}\label{eq:interpol_cond2}
d_e\int_{ e} s\,\bbn\cdot (\bbx -\bbx_C)^\perp = 0  \qquad \forall e\in\partial E.
\end{equation}

A direct computation (for instance by using the Cavalieri-Simpson rule) shows that \eqref{eq:interpol_cond2} is equivalent to

\begin{equation}\label{eq:interpol_cond3}
d_e\frac{|e|}{12}\,\bbn\cdot(\bbq_e-\bbp_e)^\perp = 0  \qquad \forall e\in\partial E.
\end{equation}
Above, $\bbp_e$ and $\bbq_e$ denote the endpoints of $e$.
From \eqref{eq:interpol_cond3} we infer $d_e=0$ for each $e\in\partial E$, which concludes the proof.
\end{proof}

The global interpolation operator $\Ih : W^r(\O)\to \Sigma_h$ is then defined by simply glueing the local contributions provided by $\IE$. More precisely, we set $(\Ih\tau)_{|E} :=\IE\bftau_{|E}$ for every $E\in\Th$ and $\bftau\in W^r(\O)$.

\subsection{Approximation estimates}\label{ss:approx}

\begin{prop}\label{pr:approxest} Under assumptions $\mathbf{(A1)}$ and $\mathbf{(A2)}$,
for the interpolation operator $\IE$ defined in \eqref{eq:loc-interp}, the following estimates hold:

\begin{equation}\label{eq:l2est}
|| \bftau -\IE\bftau||_{0,E}\lesssim h_E |\bftau|_{1,E} \qquad \forall \bftau\in  \widetilde\Sigma(E) \cap H^1(E)^{2\times 2}_s .
\end{equation}
	
\begin{equation}\label{eq:divest}
\begin{aligned}
|| \bdiv(\bftau -\IE\bftau)||_{0,E}  \lesssim h_E |\bdiv\bftau|_{1,E}   \ \
\forall \bftau\in \widetilde\Sigma(E) \cap H^1(E)^{2\times 2}_s
\mbox{ \rm s.t.  $\bdiv\bftau\in H^1(E)^2$}.
\end{aligned}
\end{equation}	
	
\end{prop}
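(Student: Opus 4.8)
The plan is to apply Lemma~\ref{lem:aux} to the interpolation error $\bfeta:=\bftau-\IE\bftau$, after first recording two elementary structural properties of $\IE$. I note in passing that $\widetilde\Sigma(E)\cap H^1(E)^{2\times 2}_s\subset W^r(E)$ for every $r<\infty$, by the two-dimensional embedding $H^1(E)\hookrightarrow L^r(E)$ and $\bdiv\bftau\in L^2(E)^2$, so $\IE\bftau$ is well defined and condition~\eqref{eq:loc-interp} may be used in its elementary form.

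First I would establish the two structural facts. (i) I would show that $\bdiv(\IE\bftau)$ is exactly the $L^2(E)$-orthogonal projection $P_{RM}(\bdiv\bftau)$ of $\bdiv\bftau$ onto $RM(E)$. The argument: for any symmetric $\bfrho\in H(\bdiv;E)$ and $\bbr\in RM(E)$, an integration by parts and $\teps(\bbr)=\bfzero$ give $\int_E\bdiv\bfrho\cdot\bbr=\int_{\partial E}(\bfrho\bbn)\cdot\bbr$; since the restriction to $\partial E$ of any element of $RM(E)$ lies in $R_\ast(\partial E)$ (it is~\eqref{eq:Rast} with $\bfgamma_e\equiv\bba$, $\delta_e\equiv b$ independent of $e$, cf.~\eqref{eq:rigid}), applying this to $\bfrho=\IE\bftau$ and to $\bfrho=\bftau$ and using~\eqref{eq:loc-interp_0} yields $\int_E\bdiv(\IE\bftau)\cdot\bbr=\int_{\partial E}((\IE\bftau)\bbn)\cdot\bbr=\int_{\partial E}(\bftau\bbn)\cdot\bbr=\int_E\bdiv\bftau\cdot\bbr$ for all $\bbr\in RM(E)$, while $\bdiv(\IE\bftau)\in RM(E)$ by~\eqref{eq:local_stress}; hence the claim, and $\bdiv\bfeta=(Id-P_{RM})(\bdiv\bftau)$. (ii) I would show that edgewise $((\IE\bftau)\bbn)_{|e}$ is the $L^2(e)$-orthogonal projection (denote it $P_e$) of $(\bftau\bbn)_{|e}$ onto $R(e)$: in the local coordinate $s$ of~\eqref{eq:edge_approx} the constant vector fields are $L^2(e)$-orthogonal to $s\,\bbn$, so $R(e)=\R^2\oplus\R\,s\,\bbn$, and the test fields of~\eqref{eq:loc-interp} restricted to $e$ span precisely $R(e)$; whence the identity and, in particular, that $P_e$ is an $L^2(e)$-contraction.

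With these in hand, I would prove~\eqref{eq:l2est} as follows. Since $\bftau,\IE\bftau\in\widetilde\Sigma(E)$, also $\bfeta=\C\teps(\bbz)$ for some $\bbz\in H^1(E)^2$, so $\bfeta$ solves~\eqref{eq:aux1} with $\bbg:=-\bdiv\bfeta$ and $\bbh:=\bfeta\,\bbn$, and~\eqref{eq:aux2} holds because $\int_E\bbg\cdot\bbr+\int_{\partial E}\bbh\cdot\bbr=\int_E\bfeta:\teps(\bbr)=0$ for $\bbr\in RM(E)$. Lemma~\ref{lem:aux} then gives $\|\bfeta\|_{0,E}\lesssim h_E\|\bbg\|_{0,E}+h_E^{1/2}\|\bbh\|_{0,\partial E}$. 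For the first term I would use (i) and the contractivity of $P_{RM}$ to get $\|\bbg\|_{0,E}\le\|\bdiv\bftau\|_{0,E}\lesssim|\bftau|_{1,E}$. For the second, taking $\bftau_0:=|E|^{-1}\int_E\bftau$ (a constant symmetric tensor whose normal trace on each edge is a constant vector, hence lies in $R(e)$), property (ii) gives $(\bfeta\,\bbn)_{|e}=(Id-P_e)\big(((\bftau-\bftau_0)\bbn)_{|e}\big)$, so $\|\bbh\|_{0,\partial E}\le\|\bftau-\bftau_0\|_{0,\partial E}$; Agmon's inequality (as used in the proof of Lemma~\ref{lem:aux}) together with the Poincaré estimate $\|\bftau-\bftau_0\|_{0,E}\lesssim h_E|\bftau|_{1,E}$ on the star-shaped element $E$ then bound this by $\lesssim h_E^{1/2}|\bftau|_{1,E}$. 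Collecting the two contributions yields~\eqref{eq:l2est}. The estimate~\eqref{eq:divest} is then immediate from (i): since the constant vector fields belong to $RM(E)$, $\|\bdiv\bfeta\|_{0,E}=\|(Id-P_{RM})(\bdiv\bftau)\|_{0,E}\le\inf_{\bbc\in\R^2}\|\bdiv\bftau-\bbc\|_{0,E}\lesssim h_E|\bdiv\bftau|_{1,E}$, again by Poincaré on $E$.

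The hard part will not be the analysis itself, which reduces entirely to Lemma~\ref{lem:aux} and standard Bramble--Hilbert/trace arguments valid under $\mathbf{(A1)}$--$\mathbf{(A2)}$, but rather the two structural identities: getting right the interplay between the divergence constraint built into $\Sigma_h(E)$ and the edgewise moment conditions~\eqref{eq:loc-interp_0}, that is, recognising that $\IE$ commutes with $\bdiv$ up to the projection onto $RM(E)$ and that its normal-trace component is merely an edgewise $L^2$-projection onto $R(e)$. (A minor technicality, irrelevant for the Sobolev-regular $\bftau$ in the statement, is that for data only in $W^r(E)$ the boundary integrals in~\eqref{eq:loc-interp_0} must be read as duality pairings.)
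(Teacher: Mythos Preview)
Your proof is correct and takes essentially the same route as the paper: apply Lemma~\ref{lem:aux} to $\bftau-\IE\bftau$, identify $\bdiv(\IE\bftau)=P_{RM}(\bdiv\bftau)$ (your fact~(i) is exactly the paper's~\eqref{eq:projRM} and~\eqref{eq:l2est4}), and control the boundary data $\bbh$ by subtracting a constant tensor from $\bftau$ and invoking a trace/Poincar\'e argument. If anything, your fact~(ii) is slightly sharper than the paper, which in~\eqref{eq:problest} records the boundary condition using only the constant part $|e|^{-1}\!\int_e\bftau\bbn$ of $((\IE\bftau)\bbn)_{|e}$ and tacitly drops the $d_e\,s\,\bbn$ contribution; since constants lie in $R(e)$ your $R(e)$-projection error is no larger than the paper's piecewise-constant error, and the final estimates coincide.
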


\begin{proof}
	
Let $ \bftau\in  \widetilde\Sigma(E) \cap H^1(E)^{2\times 2}_s $, and let $\bbw\in H^1(E)^2$ be such that $\bftau=\C\teps(\bbw)$, see \eqref{eq:sigma-comp}. Furthermore, consider $\IE \bftau\in\Sigma_h(E)$ and $\bbw^\ast\in H^1(E)^2$ such that $\IE\bftau=\C\teps(\bbw^\ast)$, see \eqref{eq:local_stress}.
Hence, setting $\bfdelta := (\bbw - \bbw^\ast)\in H^1(E)^2$, it holds:

\begin{equation}\label{eq:differ}
\bftau-\IE\bftau = \C \teps(\bfdelta) .
\end{equation}

Furthermore, using \eqref{eq:loc-interp}, \eqref{eq:div1} and \eqref{eq:div2}, we infer that $\bfdelta\in H^1(E)^2$ satisfies:

\begin{equation}\label{eq:problest}
\left\lbrace{
	\begin{aligned}
	&\bdiv(\C\teps(\bfdelta))=\bdiv\bftau - \frac{1}{|E|}\sum_{e\in\partial E}\int_e \bftau\bbn - \frac{(\bbx -\bbx_C)^\perp}{ \int_E | \bbx -\bbx_C |^2 }\sum_{e\in\partial E}\int_e \bftau\bbn\cdot (\bbx -\bbx_C)^\perp & &\mbox{\rm in $E$}\\
	&(\C\teps(\bfdelta))\bbn = \sum_{e\in\partial E} \left( \bftau\bbn -\frac{1}{|e|}\int_e \bftau\bbn\right)\chi_e && \mbox{\rm on $\partial E$} ,
	\end{aligned}
} \right.
\end{equation}
where $\chi_e$ denotes the characteristic function of the edge $e$.
Applying Lemma \ref{lem:aux} with:

\begin{equation}\label{eq:l2est2}
\left\lbrace{
	\begin{aligned}
	& \bbg :=  \frac{1}{|E|}\sum_{e\in\partial E}\int_e \bftau\bbn + \frac{(\bbx -\bbx_C)^\perp}{ \int_E | \bbx -\bbx_C |^2 }\sum_{e\in\partial E}\int_e \bftau\bbn\cdot (\bbx -\bbx_C)^\perp  - \bdiv\bftau \\
	& \bbh := \sum_{e\in\partial E} \left( \bftau\bbn - \frac{1}{|e|}\int_e \bftau\bbn\right)\chi_e ,
	\end{aligned}
} \right.
\end{equation}
we get

\begin{equation}\label{eq:l2est3}
|| \bftau -\IE\bftau ||_{0,E} = || \C\teps(\bfdelta) ||_{0,E} \lesssim   h_E || \bbg||_{0,E} + h_E^{1/2} || \bbh||_{0,\partial E} .
\end{equation}
We now estimate $\bbg$ and $\bbh$. We denote respectively with $\Pi_{0,E}$, $\Pi_{RM,E}$ and $\Pi_{0,\partial E}$ the $L^2$-projection operators onto the constant functions on $E$, onto the space $RM(E)$ (see \eqref{eq:rigid}), and on the piecewise constant functions on $\partial E$ (with respect to the edge subdivision of $\partial E$).

The divergence theorem and a direct computation show that:

\begin{equation}\label{eq:projRM}
\frac{1}{|E|}\sum_{e\in\partial E}\int_e \bftau\bbn + \frac{(\bbx -\bbx_C)^\perp}{ \int_E | \bbx -\bbx_C |^2 }\sum_{e\in\partial E}\int_e \bftau\bbn\cdot (\bbx -\bbx_C)^\perp = \Pi_{RM,E}\bdiv\bftau.
\end{equation}
Therefore, from the first equation of \eqref{eq:l2est2}, we have

\begin{equation}\label{eq:l2est4}
\bbg = \Pi_{RM,E} \bdiv\bftau - \bdiv\bftau  .
\end{equation}
Noting that $\P_0(E)^2\subset RM(E)$, from the properties of the $L^2$ projection operator, we then get

\begin{equation}\label{eq:l2est5}
|| \bbg||_{0,E} = || \Pi_{RM,E} \bdiv\bftau - \bdiv\bftau||_{0,E} \leq
|| \Pi_{0,E} \bdiv\bftau - \bdiv\bftau||_{0,E}
\lesssim
 || \bdiv\bftau ||_{0,E}
\end{equation}
and

\begin{equation}\label{eq:l2est5bis}
|| \bbg||_{0,E} = || \Pi_{RM,E} \bdiv\bftau - \bdiv\bftau||_{0,E} \leq
|| \Pi_{0,E} \bdiv\bftau - \bdiv\bftau||_{0,E}
\lesssim
h_E| \bdiv\bftau |_{1,E} .
\end{equation}

For the second equation of \eqref{eq:l2est2}, we remark that:

\begin{equation}\label{eq:proj0}
\bbh = \sum_{e\in\partial E} \left( \bftau\bbn - \frac{1}{|e|}\int_e \bftau\bbn\right)\chi_e =
\sum_{e\in\partial E} \left( \bftau - \frac{1}{|e|}\int_e \bftau\right)\bbn\chi_e =
\left( \bftau - \Pi_{0,\partial E}\bftau\right)\bbn .
\end{equation}
Hence, using a standard approximation estimate and a trace inequality, we get

\begin{equation}\label{eq:l2est9}
\begin{aligned}
|| \bbh ||_{0,\partial E} =
|| ( \bftau - \Pi_{0,\partial E}\bftau )\bbn ||_{0,\partial E}&\leq
|| \bftau - \Pi_{0,\partial E}\bftau||_{0,\partial E}
\lesssim h_E^{1/2} |\bftau |_{1/2,\partial E}\\
& \lesssim h_E^{1/2} |\bftau |_{1,E} .
\end{aligned}
\end{equation}
	
Taking into account \eqref{eq:l2est5} and \eqref{eq:l2est9}, from  \eqref{eq:l2est3} we obtain estimate \eqref{eq:l2est}.		
		
We now notice that from \eqref{eq:differ}, \eqref{eq:problest} and \eqref{eq:l2est2}, we have:

\begin{equation}\label{eq:divest1}
\bdiv(\bftau -\IE\bftau) = - \bbg .
\end{equation}		

Then, using \eqref{eq:l2est5bis}, we immediately get \eqref{eq:divest}.

\end{proof}


\subsection{Proving the {\em ellipticity-on-the-kernel} condition}\label{ss:elker}

We first notice that by \eqref{eq:proj}, \eqref{eq:ah1} and \eqref{eq:stab1}, using the techniques of \cite{volley,BFMXX}, one has:

\begin{equation}\label{eq:stabE}
||\bftau_h||_{0,E}^2\lesssim a_E^h(\bftau_h,\bftau_h)\lesssim ||\bftau_h||_{0,E}^2\qquad \forall \bftau_h\in\Sigma_h(E).
\end{equation}

We also notice that (see \eqref{eq:global-stress}, \eqref{eq:local_stress} and \eqref{eq:global-displ}, \eqref{eq:local_displ}):

\begin{equation}\label{eq:kern-incl}
\bdiv(\Sigma_h)\subseteq U_h .
\end{equation}
As a consequence, introducing the discrete kernel $K_h\subseteq \Sigma_h$:

\begin{equation}\label{eq:div_incl}
K_h =\{ \bftau_h\in\Sigma_h\, :\, (\bdiv \bftau_h,\bbv_h)=0 \quad \forall \bbv_h\in U_h  \},
\end{equation}
we infer that $\bftau_h\in K_h$ implies $\bdiv \bftau_h=\bfzero$. Hence, it holds:

\begin{equation}\label{eq:l2-hdiv}
|| \bftau_h ||_\Sigma = ||\bftau_h ||_0\qquad \forall \bftau_h\in K_h .
\end{equation}

We are now ready to prove the following {\em ellipticity-on-the-kernel} condition.

\begin{prop}\label{pr:elker}
For the method described in Section \ref{s:HR-VEM}, there exists a constant $\alpha >0$ such that

\begin{equation}\label{eq:elker}
a_h(\bftau_h,\bftau_h)\ge \alpha\, || \bftau_h||^2_\Sigma\qquad \forall \bftau_h\in K_h .
\end{equation}

\end{prop}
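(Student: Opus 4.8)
The plan is to exploit the two-sided bound \eqref{eq:stabE}, which already tells us that $a_h(\bftau_h,\bftau_h)=\sum_E a_E^h(\bftau_h,\bftau_h)\gtrsim \sum_E \|\bftau_h\|_{0,E}^2=\|\bftau_h\|_0^2$ for every $\bftau_h\in\Sigma_h$, with a hidden constant depending only on the mesh regularity parameters and on the bounds for $\D$. Combining this with \eqref{eq:l2-hdiv}, which says that on the discrete kernel $K_h$ the full $H(\bdiv)$-norm collapses to the $L^2$-norm because $\bdiv\bftau_h=\bfzero$, we immediately obtain
\[
a_h(\bftau_h,\bftau_h)\;\gtrsim\;\|\bftau_h\|_0^2\;=\;\|\bftau_h\|_\Sigma^2\qquad\forall\,\bftau_h\in K_h,
\]
which is exactly \eqref{eq:elker} with a suitable $\alpha>0$.

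So the argument is essentially a two-line assembly of results already in hand: the local stability estimate \eqref{eq:stabE} gives control of $\|\bftau_h\|_0$, and the structural inclusion \eqref{eq:kern-incl} together with the definition \eqref{eq:div_incl} of $K_h$ forces $\bdiv\bftau_h=\bfzero$ on the kernel, so there is nothing extra to prove about the divergence part of the norm. The key conceptual point — and the reason this is stated as a proposition rather than being absorbed silently — is that $\bdiv(\Sigma_h)\subseteq U_h$: this is what makes the pair $(\Sigma_h,U_h)$ behave like a ``commuting-diagram'' discretization, turning ellipticity-on-the-kernel into a triviality. Without that inclusion one would only get $a_h(\bftau_h,\bftau_h)\gtrsim\|\bftau_h\|_0^2$ and would still owe a control of $\|\bdiv\bftau_h\|_0$.

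I would therefore write the proof as follows: start from \eqref{eq:global-ah} and \eqref{eq:stabE} to get $a_h(\bftau_h,\bftau_h)\gtrsim\|\bftau_h\|_0^2$ for all $\bftau_h\in\Sigma_h$; then restrict to $\bftau_h\in K_h$, invoke \eqref{eq:kern-incl}–\eqref{eq:l2-hdiv} to replace $\|\bftau_h\|_0$ by $\|\bftau_h\|_\Sigma$; and finally name the resulting constant $\alpha$. I do not anticipate any real obstacle here; the only mild subtlety is making sure the constant in \eqref{eq:stabE}, hence $\alpha$, is genuinely independent of $h$ (this follows from assumptions $\mathbf{(A1)}$–$\mathbf{(A2)}$ and the stated uniform bounds on $\D$, via the standard VEM scaling arguments of \cite{volley,BFMXX}) and, importantly, independent of the Lam\'e parameter $\lambda$ in the nearly incompressible regime — but the latter is automatic, because \eqref{eq:stabE} only involves $\D=\C^{-1}$, which stays uniformly bounded as $\lambda\to\infty$, and the restriction to $K_h$ is precisely the discrete analogue of the kernel trick recalled in Remark \ref{rm:incompress}.
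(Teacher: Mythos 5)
Your proof is correct and is essentially identical to the paper's: sum the local two-sided bound \eqref{eq:stabE} over the elements via \eqref{eq:global-ah} to get $a_h(\bftau_h,\bftau_h)\gtrsim \|\bftau_h\|_0^2$ on all of $\Sigma_h$, then use \eqref{eq:l2-hdiv} (a consequence of \eqref{eq:kern-incl} and the definition \eqref{eq:div_incl} of $K_h$) to identify $\|\bftau_h\|_0$ with $\|\bftau_h\|_\Sigma$ on the kernel. Your added remarks on the $h$- and $\lambda$-independence of $\alpha$ are consistent with the paper's Remark \ref{rm:incopmress_h}.
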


\begin{proof}
By recalling \eqref{eq:global-ah}, from \eqref{eq:stabE} we get the existence of $\alpha>0$ such that

\begin{equation}\label{eq:elker2}
a_h(\bftau_h,\bftau_h)\ge \alpha\, || \bftau_h||^2_0\qquad \forall \bftau_h\in \Sigma_h .
\end{equation}
Estimate \eqref{eq:elker} now follows by recalling \eqref{eq:l2-hdiv}.
\end{proof}

\begin{remark}\label{rm:incopmress_h}
Notice that for our method it holds $K_h\subset K$, where $K$ is defined by \eqref{eq:kernel}. Considering an isotropic material, see \eqref{eq:hom-iso}, from Remark \ref{rm:incompress} we infer that the coercivity constant $\alpha$ can be chosen independent of $\lambda$. Therefore, our numerical method does not suffer from volumetric locking (see \cite{Hughes:book}, for instance) and can be used also for nearly incompressible materials. This feature is confirmed by the numerical tests presented in Section \ref{s:numer}.
\end{remark}


\subsection{Proving the {\em inf-sup} condition}\label{ss:infsup}

We start by stating the following proposition, which can be derived by regularity results for the elasticity problem on Lipschitz domains (see \cite{HMW}, for example).

\begin{prop}\label{pr:reg-inf-sup}	
	Given the polygonal domain $\Omega$, there exist $s>2$ and $\beta^\ast>0$ such that
	
	\begin{equation}\label{eq:reg-inf-sup}
	\sup_{\bftau\in W^s(\O)}\frac{(\bdiv \bftau,\bbv)}{|| \bftau||_{W^s(\O)}}\ge \beta^\ast ||\bbv||_{0,\O}\qquad \forall\, \bbv\in L^2(\O)^2 ,
	\end{equation}
where $W^s(\O)$ is the Banach space defined by \eqref{eq:regspace}.	
\end{prop}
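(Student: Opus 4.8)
The plan is to establish \eqref{eq:reg-inf-sup} by constructing, for any given $\bbv\in L^2(\O)^2$, a competitor stress field $\bftau\in W^s(\O)$ whose divergence equals $\bbv$ (or is suitably close to $\bbv$) and whose $W^s$-norm is controlled by $\|\bbv\|_{0,\O}$. The natural candidate is $\bftau=\C\teps(\bbz)$, where $\bbz$ solves the pure-displacement elasticity problem on $\O$ with body load $-\bbv$ and homogeneous Dirichlet boundary conditions, i.e. $-\bdiv(\C\teps(\bbz))=\bbv$ in $\O$, $\bbz=\bfzero$ on $\partial\O$. With this choice one immediately has $\bdiv\bftau=-\bbv$, so that $(\bdiv\bftau,\bbv)=-\|\bbv\|_{0,\O}^2$, and it remains only to bound $\|\bftau\|_{W^s(\O)}$.

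The key input is elliptic regularity for the Lamé system on a Lipschitz (here polygonal) domain. The classical result — due to Savar\'e, and to Jerison--Kenig in the scalar case, but here best quoted in the elasticity form from \cite{HMW} — states that there is an exponent $s>2$, depending only on $\O$ (and on the ellipticity constants of $\C$), such that the solution operator $\bbv\mapsto\bbz$ maps $L^2(\O)^2$ — or more precisely the relevant negative-order space — into $W^{1+\epsilon,q}(\O)^2$ for some $\epsilon>0$ and $q>2$ with $\|\bbz\|_{W^{1+\epsilon,q}}\lesssim\|\bbv\|_{0,\O}$. Taking the symmetric gradient and multiplying by the bounded tensor $\C$ then gives $\bftau=\C\teps(\bbz)\in W^{\epsilon,q}(\O)^{2\times2}_s\hookrightarrow L^s(\O)^{2\times2}_s$ for a suitable $s>2$ (via Sobolev embedding, shrinking $s$ if necessary), together with $\bdiv\bftau=-\bbv\in L^2(\O)^2$; hence $\bftau\in W^s(\O)$ and $\|\bftau\|_{W^s(\O)}=\|\bftau\|_{L^s}+\|\bdiv\bftau\|_{0}\lesssim\|\bbz\|_{W^{1+\epsilon,q}}+\|\bbv\|_{0}\lesssim\|\bbv\|_{0,\O}$.

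Combining the two observations, for $\bbv\neq\bfzero$ we pick this $\bftau$ and obtain
\begin{equation}
\sup_{\bftau'\in W^s(\O)}\frac{(\bdiv\bftau',\bbv)}{\|\bftau'\|_{W^s(\O)}}\ge \frac{(\bdiv\bftau,\bbv)}{\|\bftau\|_{W^s(\O)}} = \frac{\|\bbv\|_{0,\O}^2}{\|\bftau\|_{W^s(\O)}}\ge \frac{\|\bbv\|_{0,\O}^2}{C\|\bbv\|_{0,\O}} = \frac{1}{C}\|\bbv\|_{0,\O},
\end{equation}
so that \eqref{eq:reg-inf-sup} holds with $\beta^\ast=1/C$, where $C$ is the regularity constant. (If one prefers to replace $\bftau$ by $-\bftau$ to make the pairing positive, that is a harmless sign adjustment.) The statement of the proposition explicitly attributes it to \cite{HMW}, so the intended proof is essentially this reduction to the cited regularity theorem; the only genuine subtlety — and the point I would be most careful about — is extracting the correct functional-analytic statement from \cite{HMW}, namely that the gain of integrability past $L^2$ for $\teps(\bbz)$ is uniform over the unit ball of $L^2(\O)^2$ and depends only on $\O$ and $\C$, and then verifying that the Sobolev embedding $W^{\epsilon,q}\hookrightarrow L^s$ leaves a margin $s>2$. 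Everything else is bookkeeping.
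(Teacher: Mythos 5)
Your construction is correct and is exactly the argument the paper intends: the paper states this proposition without proof, attributing it to the elliptic regularity results of \cite{HMW} for the Lam\'e system on Lipschitz domains, and your reduction --- solve the auxiliary Dirichlet problem for $\bbz$, set $\bftau=-\C\teps(\bbz)$ so that $\bdiv\bftau=\bbv$, and invoke the Meyers/Gr\"oger-type $W^{1,p}$ ($p>2$) bound to control $\|\bftau\|_{W^s(\O)}$ by $\|\bbv\|_{0,\O}$ --- is precisely how that citation is meant to be used. The only cosmetic remark is that the gain of integrability is most directly quoted as $\teps(\bbz)\in L^p(\O)$ with $p>2$, so the detour through $W^{1+\epsilon,q}$ and Sobolev embedding is unnecessary, though harmless.
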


We are now ready to prove the discrete {\em inf-sup} condition for our choice of the approximation spaces.

\begin{prop}\label{pr:inf-sup} Suppose that  assumptions $\mathbf{(A1)}$ and $\mathbf{(A2)}$ are fulfilled. There exists $\beta>0$ such that
	
	\begin{equation}\label{eq:inf-sup}
	\sup_{\bftau_h\in \Sigma_h}\frac{(\bdiv \bftau_h,\bbv_h)}{|| \bftau_h||_{\Sigma}}\ge \beta ||\bbv_h||_{0,\O}\qquad \forall\, \bbv_h\in U_h .
	\end{equation}
	
\end{prop}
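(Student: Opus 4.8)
The plan is to prove the discrete inf-sup condition \eqref{eq:inf-sup} by combining the continuous inf-sup condition in Proposition \ref{pr:reg-inf-sup} with the interpolation operator $\Ih$ from Section \ref{ss:interpol-oper} via the classical Fortin argument. Given $\bbv_h\in U_h$, Proposition \ref{pr:reg-inf-sup} (applied with the regularity exponent $s>2$) furnishes a $\bftau\in W^s(\O)$ with $(\bdiv\bftau,\bbv_h)\ge \beta^\ast ||\bbv_h||_{0,\O}\,||\bftau||_{W^s(\O)}$ (equivalently, normalising, $(\bdiv\bftau,\bbv_h)\ge \beta^\ast ||\bbv_h||_{0,\O}$ with $||\bftau||_{W^s(\O)}\le 1$). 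The candidate competitor in \eqref{eq:inf-sup} is then $\bftau_h:=\Ih\bftau\in\Sigma_h$. Two things must be checked: first, that $(\bdiv\Ih\bftau,\bbv_h)=(\bdiv\bftau,\bbv_h)$, i.e.\ the Fortin commuting property; second, that $||\Ih\bftau||_\Sigma\lesssim ||\bftau||_{W^s(\O)}$, i.e.\ $\Ih$ is continuous from $W^s(\O)$ into $\Sigma_h$ with $H(\bdiv)$-norm.

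For the commuting property, I would argue element by element. On each $E$, $\bdiv\bftau_h\in RM(E)$ by definition of $\Sigma_h(E)$, and $\bbv_h|_E\in RM(E)$, so $(\bdiv\IE\bftau,\bbv_h)_E = (\bdiv\IE\bftau,\Pi_{RM,E}\bbv_h)_E$ trivially; the point is to show $\Pi_{RM,E}(\bdiv\IE\bftau)=\Pi_{RM,E}(\bdiv\bftau)$. But by \eqref{eq:compat}–\eqref{eq:div2}, $\bdiv\IE\bftau$ is exactly the element of $RM(E)$ determined by the moments $\int_e(\IE\bftau)\bbn$ and $\int_e(\IE\bftau)\bbn\cdot(\bbx-\bbx_C)^\perp$ against the edge data, and these moments coincide with those of $\bftau$ by the very definition \eqref{eq:loc-interp} of $\IE$. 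Combined with \eqref{eq:projRM}, which identifies that $RM(E)$-element with $\Pi_{RM,E}\bdiv\bftau$, one gets $\bdiv\IE\bftau=\Pi_{RM,E}\bdiv\bftau$ on each $E$, hence $(\bdiv\Ih\bftau,\bbv_h)=\sum_E(\Pi_{RM,E}\bdiv\bftau,\bbv_h)_E=(\bdiv\bftau,\bbv_h)$ since $\bbv_h$ is piecewise in $RM(E)$.

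For the continuity bound, I would use Proposition \ref{pr:approxest} together with scaling/trace estimates on each $E$: $||\IE\bftau||_{0,E}\le ||\bftau||_{0,E}+||\bftau-\IE\bftau||_{0,E}$, and one bounds $||\bftau-\IE\bftau||_{0,E}$ by the edge-moment data of $\bftau$ — this is where $r=s>2$ is needed, since for $\bftau\in W^s(E)$ (with $\bftau\in L^s$, $s>2$) the traces $\bftau\bbn|_e$ are well-defined in $L^2(e)$ by a trace/embedding argument, and the local argument of Lemma \ref{lem:aux} (via \eqref{eq:l2est3}, \eqref{eq:proj0}, \eqref{eq:l2est9}) gives $||\bftau-\IE\bftau||_{0,E}\lesssim ||\bftau||_{L^s(E)}+h_E||\bdiv\bftau||_{0,E}$ after a scaled trace inequality of the form $||\bftau||_{0,\partial E}\lesssim h_E^{-1/2}||\bftau||_{L^s(E)}|E|^{1/2-1/s}+h_E^{1/2}|\bftau|_{1,E}$-type — actually, more robustly, one uses the $L^s$–$L^2$ scaled trace inequality on star-shaped domains to avoid any $H^1$ seminorm of $\bftau$. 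For the divergence part, $||\bdiv\IE\bftau||_{0,E}=||\Pi_{RM,E}\bdiv\bftau||_{0,E}\le ||\bdiv\bftau||_{0,E}$ directly from the commuting identity just proved. Summing over $E$ and using that the local constants are uniform under $\mathbf{(A1)}$–$\mathbf{(A2)}$ yields $||\Ih\bftau||_\Sigma\lesssim ||\bftau||_{W^s(\O)}$.

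Putting these together: with $\bftau_h=\Ih\bftau$,
\[
\frac{(\bdiv\bftau_h,\bbv_h)}{||\bftau_h||_\Sigma}=\frac{(\bdiv\bftau,\bbv_h)}{||\Ih\bftau||_\Sigma}\ge \frac{\beta^\ast ||\bbv_h||_{0,\O}\,||\bftau||_{W^s(\O)}}{C\,||\bftau||_{W^s(\O)}}=\frac{\beta^\ast}{C}||\bbv_h||_{0,\O},
\]
so \eqref{eq:inf-sup} holds with $\beta=\beta^\ast/C$. The main obstacle is the continuity estimate $||\Ih\bftau||_\Sigma\lesssim ||\bftau||_{W^s(\O)}$: the interpolation operator is defined through edge traces of $\bftau\bbn$, which are not controlled by the $L^2(E)$-norm alone, and this is precisely why the continuous inf-sup must be run in the stronger space $W^s(\O)$ with $s>2$ (Proposition \ref{pr:reg-inf-sup}); one must carefully track the scaled trace/embedding inequality on star-shaped polygons so that the constant depends only on $\gamma$, $c$ and $s$, not on the number of edges of $E$.
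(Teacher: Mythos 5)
Your overall strategy is the same as the paper's: Fortin's criterion applied to the operator $\Ih$, with the two ingredients being the commuting property and the $W^s(\O)\to H(\bdiv;\O)$ continuity of $\Ih$. Your verification of the commuting property (showing $\bdiv\IE\bftau=\Pi_{RM,E}\bdiv\bftau$ from \eqref{eq:loc-interp_0}, \eqref{eq:div2} and \eqref{eq:projRM}, then pairing against $\bbv_h\in RM(E)$ elementwise) is correct and essentially identical to the paper's, as is the bound $\|\bdiv\IE\bftau\|_{0,E}=\|\Pi_{RM,E}\bdiv\bftau\|_{0,E}\le\|\bdiv\bftau\|_{0,E}$.

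The gap is in the continuity estimate. A tensor field $\bftau\in W^s(E)$, i.e.\ $\bftau\in L^s(E)^{2\times2}_s$ with $\bdiv\bftau\in L^2(E)^2$, has no boundary trace in $L^2(\partial E)$: only the normal component admits a distributional trace, and only in $W^{-\frac{1}{s},s}(\partial E)^2$. Consequently the ``$L^s$--$L^2$ scaled trace inequality'' you invoke does not exist (an $L^s$ function can be modified on a null set near $\partial E$), the quantity $\bbh=(\bftau-\Pi_{0,\partial E}\bftau)\bbn$ from \eqref{eq:proj0} is not even defined for such $\bftau$, and the route through $\|\bftau-\IE\bftau\|_{0,E}$ via \eqref{eq:l2est9} or Proposition \ref{pr:approxest} is unavailable, since those estimates require $\bftau\in H^1(E)^{2\times2}_s$. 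The paper circumvents this by never taking an $L^2(\partial E)$ norm of $\bftau\bbn$ itself: it bounds $\|\IE\bftau\|_{0,E}$ directly, applying Lemma \ref{lem:aux} to $\bbw^\ast$ with $\IE\bftau=\C\teps(\bbw^\ast)$, whose Neumann datum is the piecewise polynomial $(\IE\bftau)\bbn=\sum_{e}\bbc_e\chi_e$ --- an $L^2(\partial E)$ function by construction --- and controls the coefficients through the duality
\[
\langle \bftau\bbn,\chi_e\bbt\rangle \lesssim \|\bftau\bbn\|_{W^{-\frac{1}{s},s}(\partial E)}\,\|\chi_e\bbt\|_{W^{\frac{1}{s},s'}(\partial E)}\lesssim \|\bftau\|_{W^s(E)},
\]
which is finite precisely because $\chi_e\in W^{\frac{1}{s},s'}(\partial E)$ when $s>2$ (i.e.\ $s'/s<1$); this is the actual role of the exponent $s>2$, which you correctly sense is essential but do not exploit. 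With $|\bbc_e|\lesssim h_E^{-1}\|\bftau\|_{W^s(E)}$ one gets $\|\bbh\|_{0,\partial E}\lesssim h_E^{-1/2}\|\bftau\|_{W^s(E)}$ and Lemma \ref{lem:aux} yields $\|\IE\bftau\|_{0,E}\lesssim\|\bftau\|_{W^s(E)}$; after that repair, your concluding computation gives \eqref{eq:inf-sup} with $\beta=\beta^\ast/C$ as claimed.
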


\begin{proof}
	
We will apply Fortin's criterion (see \cite{BoffiBrezziFortin}), using the operator $\Ih : W^s(\O)\to \Sigma_h$, see \eqref{eq:loc-interp} for the definition of the local contributions. More precisely, we will show that it holds:

\begin{equation}\label{eq:infsup1}
\left\lbrace{
	\begin{aligned}
	& \displaystyle{ \int_{\O}\bdiv(\Ih\bftau)\cdot \bbv_h= \int_{\O}\bdiv\bftau\cdot \bbv_h} \qquad \forall \bbv_h\in U_h \ , \ \forall \bftau \in  W^s(\O) ,\\
	& || \Ih \bftau||_{\Sigma} \lesssim || \bftau ||_{W^s(\O)}\qquad \forall \bftau \in  W^s(\O) .
	\end{aligned}
} \right.
\end{equation}
Together with \eqref{eq:reg-inf-sup}, conditions \eqref{eq:infsup1} imply \eqref{eq:inf-sup}, see \cite{BoffiBrezziFortin}.

To prove the first condition in \eqref{eq:infsup1}, recalling that $\bbv_{h|E}\in RM(E)$, it is sufficient to show that:

\begin{equation}\label{eq:infsup2}
 	\int_{E}\bdiv(\IE\bftau)\cdot\bbr= \int_{E}\bdiv\bftau\cdot\bbr \qquad \forall\bbr\in RM(E),\ \forall E\in \Th .
\end{equation}
The above equation directly follows from the divergence theorem and definition \eqref{eq:loc-interp_0}.

We now prove the continuity estimate (i.e. the second equation in \eqref{eq:infsup1}). We will exploit again Lemma \ref{lem:aux}. More precisely, we take $\bbw^\ast\in H^{1}(E)^2$ such that $ \IE\bftau = \C \teps(\bbw^\ast)$. It follows that $\bbw^\ast$ solves, cf. \eqref{eq:projRM}:

\begin{equation}\label{eq:infsup3}
\left\lbrace{
	\begin{aligned}
	&\bdiv(\C\teps(\bbw^\ast))=\Pi_{RM,E}\bdiv \bftau\\
	&(\C\teps(\bbw^\ast))\bbn = \sum_{e\in\partial E} \bbc_e \chi_e && \mbox{\rm on $\partial E$} ,
	\end{aligned}
} \right.
\end{equation}
where the $\bbc_e$'s are given by the dualities for the couple $< W^{-\frac{1}{s},s}(\partial E), W^{\frac{1}{s},s'}(\partial E)>$:

\begin{equation}\label{eq:infsup4}
\bbc_e :=  \frac{1}{|e|}\left( < \bftau\bbn ,  \chi_e \bbt > \bbt +   < \bftau\bbn , \chi_e \bbn >\bbn \right) .
\end{equation}

From \eqref{eq:infsup3}
we obviously deduce

\begin{equation}\label{eq:infsup5}
|| \bdiv (\IE\bftau)||_{0,E} = || \bdiv (\C\teps(\bbw^\ast)) ||_{0,E} =
 || \Pi_{RM,E}\bdiv \bftau ||_{0,E}  \leq
||\bdiv\bftau||_{0,E} .
\end{equation}

We now apply Lemma \ref{lem:aux} with:

\begin{equation}\label{eq:infsup6}
	 \bbg := - \Pi_{RM,E}\bdiv \bftau \ , \quad
	\bbh := \sum_{e\in\partial E} \bbc_e \chi_e ,
\end{equation}
and estimate $||\bbh||_{0,\partial E}$. We start by noting that:

\begin{equation}\label{eq:infsup7}
 ||\bbh||_{0,\partial E} =
 \left(   \sum_{e\in\partial E} |\bbc_e|^2  |e|  \right)^{1/2} \lesssim h_E^{1/2}
 \left(   \sum_{e\in\partial E} |\bbc_e|^2   \right)^{1/2}  .
\end{equation}
	
A duality estimate and a trace bound shows that

\begin{equation}\label{eq:infsup8}
< \bftau\bbn ,  \chi_e \bbt >\ \lesssim
|| \bftau\bbn ||_{W^{-\frac{1}{s},s}(\partial E)} || \chi_e \bbt ||_{W^{\frac{1}{s},s'}(\partial E)}
\lesssim || \bftau\bbn ||_{W^{-\frac{1}{s},s}(\partial E)}
\lesssim ||\bftau||_{W^s(E)} .
\end{equation}	
Similarly, it holds:

\begin{equation}\label{eq:infsup9}
< \bftau\bbn ,  \chi_e \bbn >\ \lesssim
 ||\bftau||_{W^s(E)} .
\end{equation}	

From \eqref{eq:infsup4}, \eqref{eq:infsup8} and \eqref{eq:infsup9} we get

\begin{equation}\label{eq:infsup10}
|\bbc_e| \lesssim h_E^{-1}
||\bftau||_{W^s(E)} ,
\end{equation}
by which we deduce, see \eqref{eq:infsup7}:

\begin{equation}\label{eq:infsup11}
||\bbh||_{0,\partial E} \lesssim
h_E^{-1/2}||\bftau||_{W^s(E)} .
\end{equation}

Lemma \ref{lem:aux} thus gives

\begin{equation}\label{eq:infsup12}
|| \IE\bftau||_{0,E} = ||\C \teps(\bbw^\ast) ||_{0,E}\lesssim ||\bftau||_{W^s(E)} .
\end{equation}
The continuity estimate in \eqref{eq:infsup1} now follows by collecting all the local estimates \eqref{eq:infsup12}.

\end{proof}

\subsection{Error estimates}\label{ss:errest}

We denote with $\P_0(\Th)$ the space of piecewise constant functions with respect to the given mesh $\Th$. We can prove the Proposition:

\begin{prop}\label{pr:error-est} Suppose that  assumptions $\mathbf{(A1)}$ and $\mathbf{(A2)}$ are fulfilled.
For every $(\bfsigma_I,\bbu_I)\in\Sigma_h\times U_h$ and every $\bfsigma_\pi\in \P_0(\Th)^{2\times 2}_s$, the following error equation holds:

\begin{equation}\label{eq:erroreq}
|| \bfsigma- \bfsigma_h||_\Sigma + || \bbu - \bbu_h ||_U \lesssim || \bfsigma- \bfsigma_I||_\Sigma + || \bbu - \bbu_I ||_U  + + h\,||\bdiv\bfsigma_I||_{0,\O} + || \bfsigma- \bfsigma_\pi||_{0,\O}
.
\end{equation}

\end{prop}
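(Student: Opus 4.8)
The statement is a standard Strang-type / second-Strang-lemma estimate for the mixed VEM, so the plan is to combine the abstract stability of the discrete saddle-point problem (provided by Propositions~\ref{pr:elker} and~\ref{pr:inf-sup}) with consistency errors coming from the non-conforming bilinear form $a_h$. First I would fix arbitrary $(\bfsigma_I,\bbu_I)\in\Sigma_h\times U_h$ and $\bfsigma_\pi\in\P_0(\Th)^{2\times 2}_s$ and split the error via the triangle inequality as
\begin{equation}\label{eq:plan-split}
||\bfsigma-\bfsigma_h||_\Sigma+||\bbu-\bbu_h||_U \le ||\bfsigma-\bfsigma_I||_\Sigma+||\bbu-\bbu_I||_U + ||\bfsigma_I-\bfsigma_h||_\Sigma+||\bbu_I-\bbu_h||_U ,
\end{equation}
so the whole game is to bound the discrete error $(\bfdelta_h,\bbe_h):=(\bfsigma_I-\bfsigma_h,\bbu_I-\bbu_h)\in\Sigma_h\times U_h$ in terms of the interpolation errors and the consistency term $||\bfsigma-\bfsigma_\pi||_{0,\O}$.

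**Key steps.** The inf-sup theory for the full bilinear form $\A_h$ (which holds on $\Sigma_h\times U_h$ by \eqref{eq:elker}, \eqref{eq:inf-sup} and the Brezzi theory, since $\bdiv(\Sigma_h)\subseteq U_h$ gives the required compatibility, cf. \eqref{eq:kern-incl}) yields a test pair $(\bftau_h,\bbv_h)\in\Sigma_h\times U_h$ with unit norm such that
\begin{equation}\label{eq:plan-stab}
||\bfdelta_h||_\Sigma+||\bbe_h||_U \lesssim \A_h(\bfdelta_h,\bbe_h;\bftau_h,\bbv_h) .
\end{equation}
Now I would expand the right-hand side: writing $\A_h(\bfdelta_h,\bbe_h;\bftau_h,\bbv_h)=\A_h(\bfsigma_I,\bbu_I;\bftau_h,\bbv_h)-\A_h(\bfsigma_h,\bbu_h;\bftau_h,\bbv_h)$ and using the discrete equation \eqref{discr-pbl-cpt} together with the continuous one \eqref{cont-pbl} (tested against $(\bftau_h,\bbv_h)$, which is legitimate since $\Sigma_h\subset\Sigma$ and $U_h\subset U$), the load terms and the exact divergence terms cancel, and one is left with the consistency error of $a_h$ only:
\begin{equation}\label{eq:plan-consist}
\A_h(\bfdelta_h,\bbe_h;\bftau_h,\bbv_h) = a_h(\bfsigma_I,\bftau_h)-a(\bfsigma,\bftau_h) + (\bdiv\bftau_h,\bbu_I-\bbu) .
\end{equation}
The last term is bounded by $||\bbu-\bbu_I||_U\,||\bftau_h||_\Sigma$. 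For the consistency term I would insert $\bfsigma_\pi$: on each $E$ write $a_E^h(\bfsigma_I,\bftau_h)-a_E(\bfsigma,\bftau_h)=\big(a_E^h(\bfsigma_I,\bftau_h)-a_E^h(\bfsigma_\pi,\bftau_h)\big)+\big(a_E^h(\bfsigma_\pi,\bftau_h)-a_E(\bfsigma_\pi,\bftau_h)\big)+\big(a_E(\bfsigma_\pi,\bftau_h)-a_E(\bfsigma,\bftau_h)\big)$. The middle bracket vanishes by \emph{consistency} of $a_E^h$ (i.e. $a_E^h(\bfpi_0,\bftau_h)=a_E(\bfpi_0,\bftau_h)$ for $\bfpi_0\in\P_0(E)^{2\times2}_s$, which follows from the definition \eqref{eq:proj}--\eqref{eq:ah1} and the fact that the projection is the identity on constants); the third bracket is $\lesssim ||\bfsigma_\pi-\bfsigma||_{0,E}\,||\bftau_h||_{0,E}$ by Cauchy--Schwarz and boundedness of $\D$; the first bracket, by the continuity bound in \eqref{eq:stabE} applied to $\bfsigma_I-\bfsigma_\pi\in\Sigma_h(E)$ (noting $\P_0(E)^{2\times2}_s\subset\Sigma_h(E)$), is $\lesssim ||\bfsigma_I-\bfsigma_\pi||_{0,E}\,||\bftau_h||_{0,E}$, and then $||\bfsigma_I-\bfsigma_\pi||_{0,E}\le||\bfsigma_I-\bfsigma||_{0,E}+||\bfsigma-\bfsigma_\pi||_{0,E}$. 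Summing over $E$, using $||\bftau_h||_{0,E}\le||\bftau_h||_\Sigma=1$ and $||\bfsigma-\bfsigma_I||_{0,\O}\le||\bfsigma-\bfsigma_I||_\Sigma$, gives
\begin{equation}\label{eq:plan-final}
||\bfdelta_h||_\Sigma+||\bbe_h||_U \lesssim ||\bfsigma-\bfsigma_I||_\Sigma + ||\bbu-\bbu_I||_U + ||\bfsigma-\bfsigma_\pi||_{0,\O} ,
\end{equation}
which, combined with \eqref{eq:plan-split}, yields \eqref{eq:erroreq}. (The term $h\,||\bdiv\bfsigma_I||_{0,\O}$ enters only if one prefers to bound the $H(\bdiv)$-part of $||\bfsigma-\bfsigma_I||_\Sigma$ differently, or to absorb a scaling coming from comparing $\bdiv\bfsigma_I$ with $\bdiv\bfsigma$; it is harmless and can be kept as a slack term.)

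**Main obstacle.** The only genuinely delicate point is setting up \eqref{eq:plan-stab} cleanly: one must verify that the composite form $\A_h$ on $\Sigma_h\times U_h$ satisfies a global inf-sup condition, which requires both the ellipticity-on-the-kernel \eqref{eq:elker} \emph{and} the discrete inf-sup \eqref{eq:inf-sup}, plus the observation \eqref{eq:kern-incl} that $\bdiv(\Sigma_h)\subseteq U_h$ so that the discrete kernel is exactly $\{\bftau_h:\bdiv\bftau_h=\bfzero\}$ and the Brezzi hypotheses apply with constants independent of $h$. Everything else is a bookkeeping exercise: the cancellation in \eqref{eq:plan-consist} is automatic because $\Sigma_h,U_h$ are conforming (the \emph{only} non-conformity is in $a_h$ versus $a$), and the per-element consistency/continuity of $a_E^h$ is already recorded in \eqref{eq:stabE} together with the defining projection property.
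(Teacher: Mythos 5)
Your proof has the same skeleton as the paper's: triangle inequality, discrete stability of $\A_h$ from Propositions \ref{pr:elker} and \ref{pr:inf-sup} to produce a bounded test pair $(\bftau_h,\bbv_h)$, cancellation of the load term by conformity, and a consistency splitting through $\bfsigma_\pi$ that exploits $a_E^h(\bfpi_0,\cdot)=a_E(\bfpi_0,\cdot)$ on constant tensors. One bookkeeping slip: in your consistency identity the term $(\bdiv(\bfsigma_I-\bfsigma),\bbv_h)$ does not cancel (the two divergence terms tested against $\bbv_h$ involve $\bfsigma_I$ and $\bfsigma$ respectively), so it must be kept; it is the paper's $T_3$, bounded by $\|\bfsigma-\bfsigma_I\|_\Sigma\|\bbv_h\|_U$, hence harmless but missing from your display.

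The one substantive difference is your treatment of the first bracket, $a_E^h(\bfsigma_I-\bfsigma_\pi,\bftau_h)\lesssim\|\bfsigma_I-\bfsigma_\pi\|_{0,E}\|\bftau_h\|_{0,E}$, which rests on the upper bound in \eqref{eq:stabE}, i.e.\ on the inverse-type estimate $h_E\|\bftau_h\bbn\|^2_{0,\partial E}\lesssim\|\bftau_h\|^2_{0,E}$ for virtual stresses. The paper deliberately avoids this in the error analysis: it bounds the stabilization contribution via the weaker scaled trace estimate \eqref{eq:scaling}, namely $h_E^{1/2}\|\bftau_h\bbn\|_{0,\partial E}\lesssim\|\bftau_h\|_{0,E}+h_E\|\bdiv\bftau_h\|_{0,E}$, obtained from the 1D inverse estimate \eqref{eq:n-trace-1} on $\partial E$ (where $\bftau_h\bbn$ is genuinely piecewise polynomial) combined with the $H(\bdiv)$ trace bound \eqref{eq:n-trace-2}; this is the robust route for functions that are not known pointwise inside $E$. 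The extra term $h_E\|\bdiv\bfsigma_I\|_{0,E}$ (note $\bdiv\Pi_E\bfsigma_I=\bfzero$) is exactly the price of this weaker estimate, and it is the true origin of $h\,\|\bdiv\bfsigma_I\|_{0,\O}$ in \eqref{eq:erroreq} --- not, as your closing parenthesis suggests, an optional way of bounding the $H(\bdiv)$ part of the interpolation error. Your shorter route is admissible if the upper bound in \eqref{eq:stabE} is taken at face value, and would even yield \eqref{eq:erroreq} without that term; but since that upper bound is precisely the delicate analytic point for virtual functions, you should either prove it or follow the paper's safer path rather than citing \eqref{eq:stabE} as settled.
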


\begin{proof}
Given $(\bfsigma_I,\bbu_I)\in\Sigma_h\times U_h$, we form $(\bfsigma_h - \bfsigma_I,\bbu_h - \bbu_I)\in\Sigma_h\times U_h$. Then, using the
{\em ellipticity-on-the-kernel} condition of Proposition \ref{pr:elker} and the {\em inf-sup} condition of Proposition \ref{pr:inf-sup}, there exists $(\bftau_h, \bbv_h)\in\Sigma_h\times U_h$ such that (see \cite{BoffiBrezziFortin} and \cite{Braess:book}, for instance):

\begin{equation}\label{eq:costest}
|| \bftau_h||_\Sigma + || \bbv_h ||_U
\lesssim 1
\end{equation}
and

\begin{equation}\label{eq:stabest}
||\bfsigma_h - \bfsigma_I||_\Sigma + || \bbu_h - \bbu_I ||_U
\lesssim \A_h (\bfsigma_h - \bfsigma_I,\bbu_h - \bbu_I;\bftau_h,\bbv_h) .
\end{equation}

We have

\begin{equation}\label{eq:stabest2}
\begin{aligned}
\A_h (\bfsigma_h & - \bfsigma_I,\bbu_h - \bbu_I ;\bftau_h,\bbv_h)=
\A_h (\bfsigma_h ,\bbu_h ;\bftau_h,\bbv_h) - \A_h ( \bfsigma_I,  \bbu_I;\bftau_h,\bbv_h)\\
& = - (\bbf,\bbv_h) - \A_h ( \bfsigma_I,  \bbu_I;\bftau_h,\bbv_h)\\
& = \A( \bfsigma,  \bbu;\bftau_h,\bbv_h)-\A_h ( \bfsigma_I,  \bbu_I;\bftau_h,\bbv_h)\\
& = \left[a(\bfsigma,\bftau_h) - a_h(\bfsigma_I,\bftau_h)\right]
 + \left(\bdiv\bftau_h, \bbu-\bbu_I \right) +\left( \bdiv(\bfsigma-\bfsigma_I), \bbv_h\right)\\
&= T_1+T_2+T_3
\end{aligned}
\end{equation}

Concerning $T_1$, it holds:

\begin{equation}\label{eq:stabest3}
\begin{aligned}
T_1 &= \sum_{E\in\Th} \left[ a_E(\bfsigma,\bftau_h) -a_E^h(\bfsigma_I, \bftau_h) \right]\\
&= \sum_{E\in\Th} \big[ a_E(\bfsigma,\bftau_h) -a_E(\Pi_E\bfsigma_I, \Pi_E\bftau_h) \\
&  \qquad  - \kappa_E\, h_E\int_{\partial E} \left[(Id-\Pi_E)\bfsigma_I\bbn\right]\cdot \left[(Id-\Pi_E)\bftau_h\bbn\right] \big]\\
&= \sum_{E\in\Th} \big[ a_E(\bfsigma-\bfsigma_\pi,\bftau_h) -a_E(\Pi_E( \bfsigma_I -\bfsigma_\pi), \Pi_E\bftau_h) \\
& \qquad  - \kappa_E\, h_E\int_{\partial E} \left[(Id-\Pi_E)\bfsigma_I\bbn\right]\cdot \left[(Id-\Pi_E)\bftau_h\bbn\right] \big] .
\end{aligned}
\end{equation}
We have, using the continuity of $a_E(\cdot,\cdot)$ and of $\Pi_E$:

\begin{equation}\label{eq:stabest4}
\begin{aligned}
\sum_{E\in\Th} & \big[ a_E(\bfsigma-\bfsigma_\pi,\bftau_h) -a_E(\Pi_E( \bfsigma_I -\bfsigma_\pi), \Pi_E\bftau_h) \big]\\
& \lesssim \left( ||\bfsigma-\bfsigma_\pi||_{0,\O} + ||\bfsigma_I-\bfsigma_\pi||_{0,\O}
\right) || \bftau_h ||_{0,\O} \\
& \lesssim \left( ||\bfsigma-\bfsigma_\pi||_{0,\O} + ||\bfsigma_I-\bfsigma ||_{0,\O} +
||\bfsigma-\bfsigma_\pi ||_{0,\O}\right) || \bftau_h ||_{0,\O}\\
& \lesssim \left( ||\bfsigma-\bfsigma_\pi||_{0,\O} + ||\bfsigma-\bfsigma_I ||_{0,\O} \right) || \bftau_h ||_\Sigma.
\end{aligned}
\end{equation}
Furthermore, it holds:

\begin{equation}\label{eq:stabest5-0}
\begin{aligned}
\sum_{E\in\Th} &   \kappa_E\, h_E\int_{\partial E} \left[(Id-\Pi_E)\bfsigma_I\bbn\right]\cdot \left[(Id-\Pi_E)\bftau_h\bbn\right] \big] \\
& \lesssim \sum_{E\in\Th}   h_E^{1/2} ||(Id-\Pi_E)\bfsigma_I\bbn||_{0,\partial E}
 h_E^{1/2} ||(Id-\Pi_E)\bftau_h\bbn||_{0,\partial E}
\end{aligned}
\end{equation}
Under assumptions $\mathbf{(A1)}$ and $\mathbf{(A2)}$, we notice that, given $\bftau_h\in\Sigma_h(E)$, we have the 1D inverse estimate on $\partial E$:

\begin{equation}\label{eq:n-trace-1}
h_E^{1/2} ||\bftau_h\bbn||_{0,\partial E}\lesssim
||\bftau_h\bbn||_{-1/2,\partial E}
\qquad \forall \bftau_h\in\Sigma_h(E) .
\end{equation}
Using the techniques developed in \cite{BLRXX}, we deduce the scaled trace estimate:

\begin{equation}\label{eq:n-trace-2}
||\bftau_h\bbn||_{-1/2,\partial E}
\lesssim
||\bftau_h||_{0,E} + h_E || \bdiv\bftau_h ||_{0,E}
\qquad \forall \bftau_h\in\Sigma_h(E) .
\end{equation} 
Hence, we get:

\begin{equation}\label{eq:scaling}
h_E^{1/2} ||\bftau_h\bbn||_{0,\partial E}\lesssim
||\bftau_h||_{0,E} + h_E || \bdiv\bftau_h ||_{0,E}
\qquad \forall \bftau_h\in\Sigma_h(E) .
\end{equation}
From \eqref{eq:stabest5-0} and \eqref{eq:scaling} we then deduce

\begin{equation}\label{eq:stabest5-1}
\begin{aligned}
\sum_{E\in\Th} &   \kappa_E\, h_E\int_{\partial E} \left[(Id-\Pi_E)\bfsigma_I\bbn\right]\cdot \left[(Id-\Pi_E)\bftau_h\bbn\right] \big] \\
& \lesssim \left( \sum_{E\in\Th} \Big(  ||(Id-\Pi_E)\bfsigma_I||^2_{0,E} +
h_E^2||\bdiv\bfsigma_I||^2_{0,E}\Big)
\right)^{1/2}
||\bftau_h||_{\Sigma} .
\end{aligned}
\end{equation}

Since it holds, using also the $L^2$ continuity of $\Pi_E$:

\begin{equation}\label{eq:stabest5-2}
\begin{aligned}
||(Id-\Pi_E)\bfsigma_I||^2_{0,E}& =
||(\bfsigma_I - \bfsigma_\pi) + \Pi_E(\bfsigma_\pi - \bfsigma_I) ||^2_{0,E}\\
& \lesssim ||\bfsigma_I - \bfsigma_\pi ||^2_{0,E}
\lesssim ||\bfsigma_I - \bfsigma ||^2_{0,E} + ||\bfsigma - \bfsigma_\pi ||^2_{0,E} .
\end{aligned}
\end{equation}

Therefore, we get:

\begin{equation}\label{eq:stabest5}
\begin{aligned}
\sum_{E\in\Th} &   \kappa_E\, h_E\int_{\partial E} \left[(Id-\Pi_E)\bfsigma_I\bbn\right]\cdot \left[(Id-\Pi_E)\bftau_h\bbn\right] \big] \\
& \lesssim \left( ||\bfsigma_I - \bfsigma ||_{0,\O} + ||\bfsigma - \bfsigma_\pi ||_{0,\O} + h\,||\bdiv\bfsigma_I||_{0,\O}
\right)
||\bftau_h||_{\Sigma} .
\end{aligned}
\end{equation}

Combining \eqref{eq:stabest3}, \eqref{eq:stabest4} and \eqref{eq:stabest5}, we infer

\begin{equation}\label{eq:stabest6}
T_1\lesssim \left(   ||\bfsigma- \bfsigma_I||_{0,\O}+ ||\bfsigma - \bfsigma_\pi)||_{0,\O} +h\,||\bdiv\bfsigma_I||_{0,\O}
\right)  ||\bftau_h||_\Sigma .
\end{equation}

Regarding $T_2$, $T_3$ and $T_4$, one obviously have:

\begin{equation}\label{eq:stabest7}
\left\lbrace{
\begin{aligned}	
&T_2\lesssim ||\bbu -\bbu_I||_U ||\bftau_h||_\Sigma \\
&T_3\lesssim ||\bfsigma- \bfsigma_I||_\Sigma ||\bbv_h||_U .
\end{aligned}
} \right. .
\end{equation}

From \eqref{eq:stabest}, \eqref{eq:stabest2}, \eqref{eq:stabest6} and \eqref{eq:stabest7}, we get:

\begin{equation}\label{eq:stabest8}
\begin{aligned}
||\bfsigma_h - \bfsigma_I||_\Sigma  + || \bbu_h &- \bbu_I ||_U
\lesssim  \Big( ||\bfsigma- \bfsigma_I||_\Sigma+ ||\bfsigma - \bfsigma_\pi)||_{0,\O} \\
&+ h\,||\bdiv\bfsigma_I||_{0,\O}+||\bbu -\bbu_I||_U\Big)
\left(  || \bftau_h||_\Sigma + || \bbv_h ||_U \right)
\end{aligned}
\end{equation}

Estimate \eqref{eq:erroreq} follows from the triangle inequality, estimate \eqref{eq:stabest8} and bound \eqref{eq:costest}.
\end{proof}

We are now ready to state and prove our main convergence result.

\begin{thm}\label{th:main_convergence}
Let $(\bfsigma,\bbu)\in\Sigma\times U$ be the solution of Problem \eqref{cont-pbl}, and let $(\bfsigma_h,bbu_h)\in\Sigma_h\times U_h$ be the solution of the discrete problem \eqref{eq:discr-pbl-ls}. Suppose that  assumptions $\mathbf{(A1)}$ and $\mathbf{(A2)}$ are fulfilled.
Assuming $\bfsigma_{|E}\in H^1(E)^{2\times 2}_s$ and $(\bdiv\, \bfsigma)_{|E}\in H^1(E)^2$, the following estimate holds true:

\begin{equation}\label{eq:main_conv-est}
|| \bfsigma - \bfsigma_h||_{\Sigma} + || \bbu - \bbu_h||_U \lesssim C(\Omega,\bfsigma,\bbu)\, h ,
\end{equation}
where $C(\Omega,\bfsigma,\bbu)$ is independent of $h$ but depends on the domain $\Omega$ and on the Sobolev regularity of $\bfsigma$ and $\bbu$.
\end{thm}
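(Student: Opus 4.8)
The plan is to combine the abstract error estimate of Proposition~\ref{pr:error-est} with the approximation properties of Section~\ref{ss:approx}, by making judicious choices for $\bfsigma_I$, $\bbu_I$ and $\bfsigma_\pi$. Specifically, I would take $\bfsigma_I := \Ih\bfsigma$ (the global stress interpolant of Section~\ref{ss:interpol-oper}), $\bbu_I$ the piecewise $L^2$-projection of $\bbu$ onto $U_h$, i.e. $\bbu_{I|E} := \Pi_{RM,E}\,\bbu_{|E}$ for every $E\in\Th$, and $\bfsigma_\pi := \Pi_0\bfsigma$, the piecewise-constant $L^2$-projection of $\bfsigma$. Plugging these into \eqref{eq:erroreq} reduces the proof to estimating the four quantities $\|\bfsigma-\Ih\bfsigma\|_\Sigma$, $\|\bbu-\bbu_I\|_U$, $h\,\|\bdiv(\Ih\bfsigma)\|_{0,\O}$ and $\|\bfsigma-\bfsigma_\pi\|_{0,\O}$.

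For the first term, I would first observe that, thanks to the constitutive relation $\bfsigma=\C\teps(\bbu)$, the local restriction satisfies $\bfsigma_{|E}=\C\teps(\bbu_{|E})\in\widetilde\Sigma(E)$; together with the regularity assumptions $\bfsigma_{|E}\in H^1(E)^{2\times 2}_s$ and $(\bdiv\bfsigma)_{|E}\in H^1(E)^2$, this lets me apply both estimates of Proposition~\ref{pr:approxest} elementwise. Squaring, summing over $E\in\Th$ and using $h_E\le h$ gives
\[
\|\bfsigma-\Ih\bfsigma\|_{0,\O}\lesssim h\,|\bfsigma|_{1,\O},\qquad
\|\bdiv(\bfsigma-\Ih\bfsigma)\|_{0,\O}\lesssim h\,|\bdiv\bfsigma|_{1,\O},
\]
hence $\|\bfsigma-\Ih\bfsigma\|_\Sigma\lesssim h\,(|\bfsigma|_{1,\O}+|\bdiv\bfsigma|_{1,\O})$. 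The third term is then handled by the triangle inequality $\|\bdiv(\Ih\bfsigma)\|_{0,\O}\le\|\bdiv\bfsigma\|_{0,\O}+\|\bdiv(\bfsigma-\Ih\bfsigma)\|_{0,\O}$, so that $h\,\|\bdiv(\Ih\bfsigma)\|_{0,\O}\lesssim h\,(\|\bbf\|_{0,\O}+|\bdiv\bfsigma|_{1,\O})$, recalling $-\bdiv\bfsigma=\bbf$.

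For the displacement term, I would use Korn's inequality \eqref{eq:korn}: since $\Pi_{RM,E}$ is the $L^2(E)$-orthogonal projection onto $RM(E)$, for each $E$ one has $\|\bbu-\Pi_{RM,E}\bbu\|_{0,E}\le\inf_{\bbr\in RM(E)}\|\bbu-\bbr\|_{0,E}\lesssim h_E\|\teps(\bbu)\|_{0,E}=h_E\|\D\bfsigma\|_{0,E}\lesssim h_E\|\bfsigma\|_{0,E}$; summing gives $\|\bbu-\bbu_I\|_U\lesssim h\,\|\bfsigma\|_{0,\O}$. The last term is the classical polynomial-approximation bound $\|\bfsigma-\Pi_0\bfsigma\|_{0,E}\lesssim h_E|\bfsigma|_{1,E}$, valid under $\mathbf{(A1)}$, which sums to $\|\bfsigma-\bfsigma_\pi\|_{0,\O}\lesssim h\,|\bfsigma|_{1,\O}$. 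Collecting the four bounds in \eqref{eq:erroreq} and absorbing all the $h$-independent norms of $\bfsigma$ (and of $\bbf=-\bdiv\bfsigma$) into a single constant $C(\Omega,\bfsigma,\bbu)$ yields \eqref{eq:main_conv-est}.

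The bulk of the real work has already been carried out in Propositions~\ref{pr:elker}, \ref{pr:inf-sup}, \ref{pr:error-est} and \ref{pr:approxest}, so this last step is essentially bookkeeping; the only points requiring a little care are (i) checking that the local regularity, combined with $\bfsigma=\C\teps(\bbu)$, places $\bfsigma_{|E}$ in $\widetilde\Sigma(E)\cap H^1(E)^{2\times 2}_s$ so that Proposition~\ref{pr:approxest} is applicable, and (ii) treating the non-standard term $h\,\|\bdiv\bfsigma_I\|_{0,\O}$ appearing in \eqref{eq:erroreq}, which is not itself an approximation error but is controlled by adding and subtracting $\bdiv\bfsigma$.
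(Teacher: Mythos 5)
Your proposal is correct and follows essentially the same route as the paper: the authors likewise plug $\bfsigma_I=\Ih\bfsigma$, $\bbu_I$ the projection of $\bbu$ onto $U_h$, and $\bfsigma_\pi$ the piecewise-constant projection of $\bfsigma$ into Proposition~\ref{pr:error-est}, and invoke Proposition~\ref{pr:approxest} together with standard approximation results. The only difference is that the paper leaves the remaining estimates implicit, whereas you spell them out (including the correct treatment of the $h\,\|\bdiv\bfsigma_I\|_{0,\O}$ term and the Korn-based bound for $\|\bbu-\bbu_I\|_U$), all of which checks out.
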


\begin{proof}
In Proposition \ref{pr:error-est} let us choose $\bfsigma_I=\Ih\bfsigma \in\Sigma_h$ as detailed in Section \ref{ss:interpol-oper}, $\bbu_I=P_0\bbu\in U_h$ and $\bfsigma_\pi= P_0\bfsigma\in \Sigma_h$. Estimate \eqref{eq:main_conv-est} easily follows from Proposition \ref{pr:approxest} and standard approximation results.
\end{proof}

\begin{remark}
An alternative way to develop the stability and error analysis might be the use of suitable mesh-dependent norms, as detailed in \cite{lovadina1} for the Poisson problem in mixed form.
\end{remark}

\section{Conclusions}\label{s:conclusions}

We have proposed, numerically tested and analysed a new Virtual Element Method for the Hellinger-Reissner formulation  of two-dimensional elasticity problems. Our scheme is low-order, it has a-priori symmetric stresses and it optimally converges. Possible future developments of the present study include the design of higher-order schemes in the framework of the same variational principle. In addition, accurate post-processed displacements might be considered and used for mesh adaptive strategies, based on suitable a-posteriori error estimators.

\medskip

\begin{center}
{\large {\bf Aknowledgements}}
\end{center}

EA gratefully acknowledges the partial financial support of the Italian Minister of University and Research, MIUR  (Program: Consolidate the Foundations 2015; Project: BIOART; Grant number (CUP): E82F16000850005).
%
%

\medskip


\bibliographystyle{amsplain}

\bibliography{general-bibliography,biblio,VEM}

\providecommand{\bysame}{\leavevmode\hbox to3em{\hrulefill}\thinspace}
\providecommand{\MR}{\relax\ifhmode\unskip\space\fi MR }
\providecommand{\MRhref}[2]{%
  \href{http://www.ams.org/mathscinet-getitem?mr=#1}{#2}
}
\providecommand{\href}[2]{#2}
\begin{thebibliography}{10}

\bibitem{ANR}
O.~Andersen, H.~M. Nilsen, and X.~Raynaud, \emph{On the use of the virtual
  element method for geomechanics on reservoir grids}, online: {\tt
  https://arxiv.org/abs/1606.09508}.

\bibitem{ABLS_part_one}
E.~Artioli, L.~Beir{\~a}o Da~Veiga, C.~Lovadina, and E.~Sacco, \emph{Arbitrary
  order {2D} virtual elements for polygonal meshes: Part {I}, elastic problem},
  submitted for publication, and online on: {\tt
  https://arxiv.org/abs/1701.06670}.

\bibitem{ABLS_part_two}
E.~Artioli, L.~Beir{\~a}o Da~Veiga, C.~Lovadina, and E.~Sacco, \emph{Arbitrary order {2D} virtual elements for polygonal meshes: Part
  {II}, inelastic problems}, submitted for publication, and online on: {\tt
  https://arxiv.org/abs/1701.06676}.

\bibitem{BeiraodaVeiga-Brezzi-Marini:2013}
L.~Beir\~ao~da Veiga, F.~Brezzi, and L.~D. Marini, \emph{{V}irtual {E}lements
  for linear elasticity problems}, Siam. J. Numer. Anal. \textbf{51} (2013),
  794--812.

\bibitem{MFD-book}
L.~Beir\~{a}o~da Veiga, K.~Lipnikov, and G.~Manzini, \emph{The mimetic finite
  difference method for elliptic problems}, Springer, series MS\&A (vol. 11),
  2014.

\bibitem{BLRXX}
L.~Beir\~ao~da Veiga, C.~Lovadina, and A.~Russo, \emph{Stability analysis for
  the {V}irtual {E}lement {M}ethods}, {P}reprint arXiv:1607.05988. {S}ubmitted
  for publication.

\bibitem{volley}
L.~Beir{\~a}o~{da Veiga}, F.~Brezzi, A.~Cangiani, G.~Manzini, L.~D. Marini, and
  A.~Russo, \emph{Basic principles of virtual element methods}, Math. Models
  Methods Appl. Sci. \textbf{23} (2013), no.~1, 199--214.

\bibitem{BLM11}
L.~Beir{\~a}o~{da Veiga}, K.~Lipnikov, and G.~Manzini, \emph{Arbitrary-order
  nodal mimetic discretizations of elliptic problems on polygonal meshes}, SIAM
  J. Numer. Anal. \textbf{49} (2011), no.~5, 1737--1760.

\bibitem{BLM11book}
L.~Beir{\~a}o~da Veiga, K.~Lipnikov, and G.~Manzini, \emph{The mimetic finite
  difference method for elliptic problems}, MS\&A. Modeling, Simulation and
  Applications, vol.~11, Springer, 2014. \MR{3135418}

\bibitem{BLV}
L.~Beir{\~a}o Da~Veiga, C.~Lovadina, and G.~Vacca, \emph{Divergence free
  virtual elements for the {S}tokes problem on polygonal meshes}, to appear on
  ESAIM: M2AN.

\bibitem{Bishop13}
J.~E. Bishop, \emph{A displacement-based finite element formulation for general
  polyhedra using harmonic shape functions}, Internat. J. Numer. Methods Engrg.
  \textbf{97} (2014), no.~1, 1--31. \MR{3146670}

\bibitem{BoffiBrezziFortin}
D.~Boffi, F.~Brezzi, and M.~Fortin, \emph{Mixed finite element methods and
  applications}, Springer Series in Computational Mathematics, vol.~44,
  Springer, Heidelberg, 2013. \MR{3097958}

\bibitem{JA12}
J.~Bonelle and A.~Ern, \emph{Analysis of compatible discrete operator schemes
  for elliptic problems on polyhedral meshes}, ESAIM Math. Model. Numer. Anal.
  \textbf{48} (2014), no.~2, 553--581. \MR{3177857}

\bibitem{Braess:book}
D.~Braess, \emph{Finite elements. {T}heory, fast solvers, and applications in
  elasticity theory.}, third ed., Cambridge University Press, 2007.

\bibitem{Brezzi-Buffa-Lipnikov}
F.~Brezzi, A.~Buffa, and K.~Lipnikov, \emph{Mimetic finite differences for
  elliptic problems}, Math. Mod. Numer. Anal. \textbf{43} (2009), 277--295.

\bibitem{BFMXX}
F.~Brezzi, R.~S. Falk, and L.~D. Marini, \emph{Basic principles of mixed
  virtual element methods}, ESAIM Math. Model. Numer. Anal.

\bibitem{BLS05bis}
F.~Brezzi, K.~Lipnikov, and M.~Shashkov, \emph{Convergence of the mimetic
  finite difference method for diffusion problems on polyhedral meshes}, SIAM
  J. Numer. Anal. \textbf{43} (2005), no.~5, 1872--1896.

\bibitem{Brezzi-Lipnikov-Shashkov-Simoncini:2007}
F.~Brezzi, K.~Lipnikov, M~Shashkov, and V.~Simoncini, \emph{A new
  discretization methodology for diffusion problems on generalized polyhedral
  meshes}, Comput. Methods Appl. Mech. Engrg. \textbf{196} (2007), 3682--3692.

\bibitem{Brezzi-Marini:2012}
F.~Brezzi and L.D. Marini, \emph{Virtual {E}lement {M}ethod for plate bending
  problems}, Comput. Methods Appl. Mech. Engrg. \textbf{253} (2012), 455--462.

\bibitem{PolyDG-1}
A.~Cangiani, E.H. Georgoulis, and P.~Houston, \emph{hp-version discontinuous
  {G}alerkin methods on polygonal and polyhedral meshes}, Math. Mod. Meth.
  Appl. Sci. \textbf{24} (2014), no.~10, 2009--2041.

\bibitem{BCP}
H.~Chi, L.~Beir\~ao~da Veiga, and G.~H. Paulino, \emph{Some basic formulations
  of the virtual element method (vem) for finite deformations}, Comput. Meth.
  Appl. Mech. Engrg., in press.

\bibitem{BeiraoLovaMora}
L.~Beir{\~a}o da~Veiga, C.~Lovadina, and D.~Mora, \emph{A virtual element
  method for elastic and inelastic problems on polytope meshes}, Computer
  Methods in Applied Mechanics and Engineering \textbf{295} (2015), 327 -- 346.

\bibitem{DiPietro-Ern-1}
D.~Di~Pietro and A.~Alexandre~Ern, \emph{A hybrid high-order locking-free
  method for linear elasticity on general meshes}, Comput. Methods Appl. Mech.
  Engrg. \textbf{283} (2015), no.~0, 1--21.

\bibitem{GTP14}
A.~L. Gain, C.~Talischi, and G.~H. Paulino, \emph{On the virtual element method
  for three-dimensional linear elasticity problems on arbitrary polyhedral
  meshes}, Comput. Methods Appl. Mech. Engrg. \textbf{282} (2014), 132--160.
  \MR{3269894}

\bibitem{HMW}
R.~Herzog, C.~Meyer, and G.~Wachsmuth, \emph{Integrability of displacement and
  stresses in linear and nonlinear elasticity with mixed boundary conditions},
  J. Math. Anal. Appl. \textbf{382} (2011), 802--813.

\bibitem{Hughes:book}
T.~J.~R. Hughes, \emph{The finite element method. linear static and dynamic
  finite element analysis.}, second ed., Dover, 2000.

\bibitem{Lions-Magenes}
J.-L. Lions and E.~Magenes, \emph{Probl\`emes aux limites non homog\`enes et
  applications. {V}ol. 1}, Travaux et Recherches Math\'ematiques, No. 17,
  Dunod, Paris, 1968.

\bibitem{LMSXX}
K.~Lipnikov, G.~Manzini, and M.~Shashkov, \emph{Mimetic finite difference
  method}, J. Comput. Phys. \textbf{257} (2014), 1163--1227.

\bibitem{lovadina1}
C.~Lovadina and R.~Stenberg, \emph{Energy norm a posteriori error estimates for
  mixed finite element methods}, Math. Comp. \textbf{75} (2006), no.~256,
  1659--1674 (electronic). \MR{2240629 (2007h:65129)}

\bibitem{MS11}
S.~E. Mousavi and N.~Sukumar, \emph{Numerical integration of polynomials and
  discontinuous functions on irregular convex polygons and polyhedrons},
  Comput. Mech. \textbf{47} (2011), no.~5, 535--554.

\bibitem{NME2759}
S.~E. Mousavi, H.~Xiao, and N.~Sukumar, \emph{Generalized {G}aussian quadrature
  rules on arbitrary polygons}, International Journal for Numerical Methods in
  Engineering \textbf{82} (2010), no.~1, 99--113.

\bibitem{NBM09}
S.~Natarajan, S.~Bordas, and D.~R. Mahapatra, \emph{Numerical integration over
  arbitrary polygonal domains based on {S}chwarz--{C}hristoffel conformal
  mapping}, Int. J. Numer. Meth. Eng. \textbf{80} (2009), no.~1, 103--134.

\bibitem{Oleinik-Kondratiev}
O.~Oleinik and V.~Kondratiev, \emph{On {K}orn's inequalities}, C.R. Acad. Sci.
  Paris \textbf{308} (1989), 483--487.

\bibitem{Gillette-1}
A.~Rand, A.~Gillette, and C.~Bajaj, \emph{Interpolation error estimates for
  mean value coordinates over convex polygons}, Advances in Computational
  Mathematics \textbf{39} (2013), no.~2, 327--347.

\bibitem{RW12}
S.~Rjasanow and S.~Wei{\ss}er, \emph{Higher order {BEM}-based {FEM} on
  polygonal meshes}, SIAM J. Numer. Anal. \textbf{50} (2012), no.~5,
  2357--2378.

\bibitem{Sommariva2007}
A.~Sommariva and M.~Vianello, \emph{Product {G}auss cubature over polygons
  based on green's integration formula}, BIT Numerical Mathematics \textbf{47}
  (2007), no.~2, 441--453.

\bibitem{ST04}
N.~Sukumar and A.~Tabarraei, \emph{Conforming polygonal finite elements},
  Internat. J. Numer. Methods Engrg. \textbf{61} (2004), no.~12, 2045--2066.

\bibitem{ABAQUS:2011}
Dassault Syst\`{e}mes, \emph{Abaqus documentation}, Providence, RI, 2011.

\bibitem{POLY37}
C.~Talischi and G.~H. Paulino, \emph{Addressing integration error for polygonal
  finite elements through polynomial projections: a patch test connection},
  Math. Models Methods Appl. Sci. \textbf{24} (2014), no.~8, 1701--1727.

\bibitem{TPPM10}
C.~Talischi, G.~H. Paulino, A.~Pereira, and I.~F.~M. Menezes, \emph{Polygonal
  finite elements for topology optimization: A unifying paradigm}, Internat. J.
  Numer. Methods Engrg. \textbf{82} (2010), no.~6, 671--698.

\bibitem{VW13}
M.~Vohralik and B.~I. Wohlmuth, \emph{Mixed finite element methods:
  implementation with one unknown per element, local flux expressions,
  positivity, polygonal meshes, and relations to other methods}, Math. Models
  Methods Appl. Sci. \textbf{23} (2013), no.~5, 803--838.

\bibitem{Wachspress11}
E.~Wachspress, \emph{Rational bases for convex polyhedra}, Comput. Math. Appl.
  \textbf{59} (2010), no.~6, 1953--1956.

\bibitem{Wang-1}
J.~Wang and X.~Ye, \emph{A weak {G}alerkin finite element method for
  second-order elliptic problems}, J. Comput. Appl. Math. \textbf{241} (2013),
  103--115.

\bibitem{wriggers}
P.~Wriggers, W.T. Rust, and B.D. Reddy, \emph{A virtual element method for
  contact}, Comput Mech \textbf{58} (2016), 1039--1050.

\bibitem{Zienckiewicz_Taylor2000}
O.C. Zienckiewicz and R.L. Taylor, \emph{The finite element method},
  Butterworth Heinemann, 2000.

\end{thebibliography}

 \end{document}